\documentclass[UTF8,11pt]{ctexart}
\usepackage[a4paper,margin=2.8cm]{geometry}
\usepackage{amsmath,amssymb,amsthm,mathtools,mathrsfs,bm}
\usepackage{iftex}
\ifPDFTeX
  \usepackage[T1]{fontenc}
\fi
\IfFileExists{newtxtext.sty}{\usepackage{newtxtext}}{}
\IfFileExists{newtxmath.sty}{\usepackage{newtxmath}}{}
\usepackage{microtype}
\usepackage{enumitem}
\usepackage{setspace}
\usepackage{hyperref}
\usepackage[nameinlink,noabbrev]{cleveref}
\usepackage{aliascnt}
\usepackage{fancyhdr}
\usepackage{lastpage}
\hypersetup{colorlinks=true,linkcolor=blue,citecolor=blue,urlcolor=blue}

\numberwithin{equation}{section}
\allowdisplaybreaks

\theoremstyle{plain}
\newtheorem{theorem}{Theorem}[section]

\newaliascnt{lemma}{theorem}
\newtheorem{lemma}[lemma]{Lemma}
\aliascntresetthe{lemma}

\newaliascnt{proposition}{theorem}
\newtheorem{proposition}[proposition]{Proposition}
\aliascntresetthe{proposition}

\newaliascnt{corollary}{theorem}
\newtheorem{corollary}[corollary]{Corollary}
\aliascntresetthe{corollary}

\theoremstyle{definition}
\newaliascnt{definition}{theorem}
\newtheorem{definition}[definition]{Definition}
\aliascntresetthe{definition}

\newaliascnt{example}{theorem}
\newtheorem{example}[example]{Example}
\aliascntresetthe{example}

\theoremstyle{remark}
\newaliascnt{remark}{theorem}
\newtheorem{remark}[remark]{Remark}
\aliascntresetthe{remark}

\theoremstyle{definition}
\newaliascnt{question}{theorem}
\newtheorem{question}[question]{Question}
\aliascntresetthe{question}

\Crefname{question}{question}{questions}
\Crefname{question}{Question}{Questions}

\crefname{theorem}{theorem}{theorems}
\Crefname{theorem}{Theorem}{Theorems}
\crefname{lemma}{lemma}{lemmas}
\Crefname{lemma}{Lemma}{Lemmas}
\crefname{proposition}{proposition}{propositions}
\Crefname{proposition}{Proposition}{Propositions}
\crefname{corollary}{corollary}{corollaries}
\Crefname{corollary}{Corollary}{Corollaries}
\crefname{definition}{definition}{definitions}
\Crefname{definition}{Definition}{Definitions}
\crefname{example}{example}{examples}
\Crefname{example}{Example}{Examples}
\crefname{remark}{remark}{remarks}
\Crefname{remark}{Remark}{Remarks}

\newcommand{\R}{\mathbb{R}}
\newcommand{\Z}{\mathbb{Z}}

\newcommand{\supp}{\operatorname{supp}}

\newcommand{\papertitle}{Spectral eigenvalue set of self‑similar measures associated\\ with product‑form Hadamard triples}
\title{\papertitle}
\date{}

\newcommand{\paperauthors}{Xin Yang$^{1}$\quad Wei-Jie Wang$^{2,*}$}

\newcommand{\paperkeywords}{Spectral measure; product‑form Hadamard triples; Spectral eigenvalue; Beurling dimension.}

\newcommand{\paperaddressone}{School of Mathematics and Statistics, Huazhong University of Science and Technology, Wuhan 430074, China}
\newcommand{\paperemailone}{xyang@hust.edu.cn}
\newcommand{\paperaddresstwo}{Hubei Key Laboratory of Mathematical Sciences, College of Mathematics and Statistics, Central China Normal University, Wuhan, Hubei 430079, China}
\newcommand{\paperemailtwo}{wwjmath@163.com}

\begin{document}

\begin{center}
{\Large\bfseries \begin{tabular}{@{}c@{}}\papertitle\end{tabular}\par}
\vspace{0.9em}
{\large \paperauthors\par}
\end{center}
\vspace{0.7em}

\begin{abstract}
Previously, An \cite{AL01} showed that the self-similar measure $\mu$ generated by a product-form Hadamard triple is a spectral measure. In this paper, we study its spectral eigenvalue problem. A set $A\subset\mathbb R$ is called a spectral eigenvalue set of $\mu$ if there exists a spectrum $\Lambda$ of $\mu$ such that
$a\Lambda$ is a spectrum of $\mu$ for every $a\in A$. We introduce the Product-form Hadamard multiplier set $\mathcal{T}_*$, and prove that for any $s\in [0,\frac{\log \#\mathcal{D}}{\log N}]$, the spectral eigensubspace 
$$V^{(s)}(\mu_{N,\mathcal{D}},\mathcal{T}_*):=\{\Lambda :t \Lambda \text{ is a spectrum of }\mu \text{ for all }t \in\mathcal{T}_* \text{ and } \dim_{Be}(\Lambda)=s\}$$
has the cardinality of the continuum. This result allows us to show that for the four-digit self-similar measures, a real number $t$ is a spectral eigenvalue if and only if $t \in \left\{\frac{u}{v}:u,v\in 2\mathbb{Z}+1\right\}$.
And for any subset $S$ of $\mathbb{R}$ is a spectral eigenvalue set if and only if $S \subset t^{-1} (2\mathbb{Z}+1)$ for some $t\in 2\mathbb{Z}+1$.
\end{abstract}

\begingroup
\renewcommand{\thefootnote}{}
\footnotetext{%
\textit{Mathematics Subject Classification (2020).} Primary 42A38; Secondary 42C15, 28A33.\par
\textit{Key words and phrases.} \paperkeywords\par
Corresponding author $*$.}
\endgroup

\section{Introduction}

A Borel probability measure $\mu$ on $\mathbb{R}^d$ is called a spectral measure if we can find a countable set $\Lambda \subset \mathbb{R}^d$ such that the set of exponential functions $E(\Lambda):=\{e^{2\pi i \lambda\cdot x}:\lambda \in \Lambda\}$ forms an orthogonal basis for $L^2(\mu)$. If such $\Lambda$ exists, then $\Lambda$ is called a spectrum for $\mu$.

The study of spectra has a long time and has received much more attention from
the famous spectral set conjecture raised by Fuglede \cite{Fuglede01} in 1974. According to the result of He, Lau and Lai \cite{HLL01}, to study this issue, one just need to focus on measures $\mu$ of pure type, i.e., $\mu$ is either discrete with finite support, or singularly continuous or absolutely continuous with respect to Lebesgue measure. For a discrete measure $\mu$, the
spectrality of $\mu$ is closely related to the integer tiles; For an absolutely continuous measure $\mu$, Dutkay and Lai \cite{DL02} showed that an absolutely continuous spectral measure
is a normalized Lebesgue measure restricted on its support. For a singularly continuous measure $\mu$, the spectrality has a big difference from the other two cases and our understanding of singular continuous measures is far less profound than that of the other two cases. 

In recent decades, the intersection of fractal geometry with various other branches of mathematics has been very active, such as harmonic analysis, wavelet analysis, PDEs, and so on. Recall that given a finite collection of maps
$$\phi_d(x)=\frac{1}{N}(x+d),$$
where $d \in \mathcal{D}$, the (equal-weight) self-similar measure $\mu=\mu_{N,\mathcal{D}}$ is the unique probability measure such that
$$\mu(E)=\frac{1}{\#D}\sum_{d \in \mathcal{D}} \mu(\phi_d^{-1}(E)), \text{ for any Borel set } E .$$
The attractor is the unique non-empty compact set $K=K(N,\mathcal{D})$ satisfying the identity $K=\bigcup_{d\in \mathcal{D}}\phi_i(K)$. And the measure $\mu$ can be written as an infinite convolution of discrete measures 
\begin{equation}\label{eq:1.1}
\mu_{N,\mathcal{D}}=\delta_{N^{-1}\mathcal{D}} * \delta_{N^{-2}\mathcal{D}} * \cdots
\end{equation}
where the dirac measure $\delta_{\mathcal{D}}=\frac{1}{\#\mathcal{D}}\sum_{d\in \mathcal{D}} \delta_d$. 
There is a crucial concept underlying the study of spectrality for self-similar measures. 
\begin{definition}
Let $N \ge 2$ be an integer, and let $D, L \subset \mathbb{Z}$ be two finite sets with $\#D=\#L$
The triple $(N, D, L)$ is called a \emph{Hadamard triple} if the matrix
\begin{equation*}
    H := \frac{1}{\sqrt{m}} \left( e^{-2\pi i \, l d / N} \right)_{l \in L, \, d \in D}
\end{equation*}
is unitary, i.e., $H^*\cdot H = I$. 
Equivalently, the following orthogonality condition is satisfied:
\begin{equation*}
    \widehat{\delta}_{N^{-1}D}(l_1-l_2)=\frac{1}{\#D}\sum_{d \in D} e^{-2\pi i (l_1 - l_2) d / N} = 0, \qquad \forall \, l_1, l_2 \in L.
\end{equation*}
\end{definition}

In 2002, {\L}aba and Wang \cite{LW02} proved that if $(N,D,L)$ is a Hadamard triple, then the associated self-similar measure $\mu_{N,D}$ is spectral. This result was later extended to higher-dimensional self-affine measures by Dutkay, Haussermann and Lai \cite{DDHL02}.

\subsection{Product-form Hadamard triple}
However, the Hadamard triple condition is only a sufficient condition and is not necessary for the spectrality of self-similar measures.

\begin{example}
As a simplest example, Consider
$N=2, D=\{0,2\}$. Then the self-similar probability measure $\mu=\mu_{2,D}$ is precisely the normalized Lebesgue measure on $[0,2]$. It follows that $\mu$ is spectral and
\[
\Lambda=\frac12\mathbb Z
\]
is a spectrum for $\mu$. there is no integer set $L\subset\mathbb Z$ such that
$(2,\{0,2\},L)$
is a Hadamard triple.
\end{example}

Motivated by the fact that ordinary Hadamard triples do not cover all spectral self-similar measures, An and Lai \cite{AL01} introduced the notion of product-form Hadamard triples.

\begin{definition}[Product-form Hadamard triple]
    Let $N \geq 2$ be an integer and $\mathcal{A}=\{a_s:s=0,1,\ldots,n-1\}$ be a subset of integers and for each $s$, we have $\mathcal{B}_s$ as another subset of integers. We say that $\mathcal{D}$ is a \textit{product-form digit set} if there exists $r \geq 0$ such that 
$$\mathcal{D}=\mathcal{D}_r=\bigcup_{s=0}^{n-1} (a_s+N^r\mathcal{B}_s)$$
and we have the following conditions for $\mathcal{A},\mathcal{B}_s,\mathcal{L}_1,\mathcal{L}_2$:\\

\noindent (i) $(N,\mathcal{A},\mathcal{L}_1)$ and $(N,\mathcal{B}_s,\mathcal{L}_2)$ are Hadamard triples for all $s\in \{0,1,\ldots,n-1\}$.

\noindent (ii) $(N,\mathcal{A} \oplus\mathcal{B}_s,\mathcal{L}_1\oplus\mathcal{L}_2)$ are Hadamard triples for all $s\in \{0,1,\ldots,n-1\}$.
\end{definition}

An and Lai \cite{AL01} proved that every self-similar measure generated by a product-form Hadamard triple is spectral. Their result provides a larger class of spectral self-similar measures than those arising from ordinary Hadamard triples. In particular, some product-form constructions yield spectral self-similar measures whose digit sets do not admit any ordinary Hadamard triple structure.

Besides proving the spectrality of a measure, it is also natural to study the structure of its spectra. This direction goes back to the fundamental example of Jorgensen and Pedersen \cite{JS01}, who constructed the first singular non-atomic spectral measure, namely the middle fourth Cantor measure $\mu_{4,\{0,2\}}$, and gave the standard spectrum
\[
\Lambda_0=
\left\{
\sum_{j=0}^{n}4^j a_j:
a_j\in\{0,1\},\ n\geq 0
\right\}.
\]
Subsequent works of Strichartz \cite{Strichartz01,Strichartz02} and Dutkay, Han and Sun \cite{Ervin Dutkay2014} revealed that different spectra of the same singular measure may lead to quite different Fourier analytic behavior. For example, although both $\Lambda_0$ and $17\Lambda_0$ are spectra of $\mu_{4,\{0,2\}}$, the corresponding mock Fourier series may have different convergence and divergence properties for certain continuous functions.

These phenomena motivated the study of spectral eigenvalue problems \cite{Dai2016,DutkayHaussermann2016,FuHe2017,FuHeWen2018,HeTangWu2019}. For a spectral measure $\mu$ and a spectrum $\Lambda$, one asks for which scaling factors $t\in\mathbb R$ the dilated set $t\Lambda$ is still a spectrum of $\mu$. Such numbers are called spectral eigenvalues. More generally, a set $A\subset\mathbb R$ is called a spectral eigenvalue set of $\mu$ if there exists a spectrum $\Lambda$ of $\mu$ such that
$a\Lambda$ is a spectrum of $\mu$ for every $a\in A$. Such a spectrum $\Lambda$ is called an eigenvalue-spectrum for $(\mu, A)$. The collection of all eigenvalue-spectra is denoted by
$$
V(\mu, A)
=
\{\Lambda:\ \Lambda \text{ is a spectrum of }\mu
\text{ and } a\Lambda \text{ is a spectrum of }\mu
\text{ for all }a\in A\},
$$
where spectra are usually considered up to translation.

For the middle fourth Cantor measure $\mu_{4,\{0,2\}}$, Deng, Fu and Kang \cite{Guotai Deng 01} showed that the set of odd integers forms a spectral eigenvalue set and that the corresponding spectral eigen-subspace has the cardinality of the continuum. Under the classical Hadamard triple assumption, Kong, Li and Wang \cite{KongLiWang2025} proved that if
\[
 A\subset \{p\in\mathbb Z:\gcd(p,q)=1\}
\]
has only finitely many prime factors, then $ A$ is a spectral eigenvalue set for $\mu_{q,D}$ and $V(\mu_{q,D}, A)$ is infinite. They further asked whether $V(\mu_{q,D}, A)$ always has the cardinality of the continuum. This question was recently answered affirmatively by Lu \cite{Lu 01}, who removed the finite-prime-factor restriction and proved that the corresponding spectral eigen-subspaces have continuum cardinality. 

It is natural to ask whether such a result remains valid in a more general setting. 
Since product-form Hadamard triples provide a broader framework for constructing 
spectral self-similar measures, we further ask whether Lu's continuum-cardinality 
result for spectral eigen-subspaces admits a corresponding extension in the 
product-form Hadamard triple.

\begin{question}
Let $(N,\mathcal D,\mathcal L_1\oplus\mathcal L_2)$ be a product-form Hadamard triple, and let $\mu_{N,\mathcal D}$ be the associated self-similar spectral measure. Suppose that $ A$ is a spectral eigenvalue set of $\mu_{N,\mathcal D}$. Does the spectral eigen-subspace
\[
V(\mu_{N,\mathcal D},A)
\]
have the cardinality of the continuum?
\end{question}

Before answering this question, we introduce the Beurling dimension, a notion of dimension that measures the size of discrete sets.
$$\dim_{Be}(\Lambda)=\limsup_{h\rightarrow \infty} \sup_{x \in \mathbb{R}} \frac{\log \#(\Lambda \cap (x-\frac{h}{2},x+\frac{h}{2}))}{\log h}.$$
He et al.\cite{Hekang 01} proved that $\dim_{Be} (\Lambda) \leq \dim_H(\supp \mu)$ holds for all spectra $\Lambda$ of self-similar spectral measures, and An  et al.\cite{ALAI01} showed that, under the Hadamard triple condition, there exist spectra with Beurling dimension zero. Naturally, this leads to the following interpolation problem.

\begin{question}
Let $(N,\mathcal D,\mathcal L_1\oplus\mathcal L_2)$ be a product-form Hadamard triple, and let $\mu_{N,\mathcal D}$ be the associated self-similar spectral measure. For $0 \leq s \leq \dim_H(\supp \mu)$, does there exists a spectrum $\Lambda$ of $\mu$ with $\dim_{Be} (\Lambda)=s$?
\end{question}

\subsection{main results}
In fact, the main objects studied in this paper are singular continuous self-similar measures, which form a rich and important class of fractal measures.

\begin{example}
Let $N=8$ and $\mathcal D=\{0,4,8,12\}$. Let
$$
\mathcal A=\{0,4\},
\qquad
\mathcal B_0=\mathcal B_1=\{0,1\},
$$
and take
$$
\mathcal L_1=\{0,1\},
\qquad
\mathcal L_2=\{0,4\}.
$$
A direct verification gives that
$$
(8,\mathcal A,\mathcal L_1), \quad
(8,\mathcal B_s,\mathcal L_2), \quad (8,\mathcal A\oplus\mathcal B_s,\mathcal L_1\oplus\mathcal L_2)
$$
are Hadamard triples for $s=0,1$. Finally,
$$
\mathcal D
=
\bigcup_{a\in\mathcal A}
\left(a+8\{0,1\}\right).
$$
Thus $\mathcal D$ is a product-form digit set. It is well known that $(8,\frac{1}{4}\mathcal{D},L)$
is a Hadamard triple for $L=\{0,2,4,6\}$.
Hence the self-similar measure
$\nu=\mu_{8,\frac{1}{4}\mathcal{D}}$
is spectral measure. Moreover, according to Lu's result \cite{Lu 01}, for any $0\leq s \leq \frac{2}{3}$, 
$$\{\Gamma \in V(\nu,\mathcal{T}_*):\dim_{Be}(\Gamma)=s\}$$ has the cardinality of the continuum. Now let $T(x)=4x$. 
Since $\mathcal D=4E$, we have $\mu=\nu(T^{-1}).$
Therefore $\mu$ is also spectral. More precisely, for any $\Gamma \in V(\nu,\mathcal{T}_*)$, 
$$
\Lambda=\frac14\Gamma
$$
is a spectrum for $\mu$. This implies that $$\{\Lambda \in V(\mu_{8,\mathcal{D}},\mathcal{T}_*):\dim_{Be}(\Lambda)=s\}$$
has the cardinality of the continuum.
\end{example}

Let the Product-form Hadamard multiplier set
\begin{align*}
\mathcal{T}_*
:=\Big\{ t\in\mathbb{Z}:\ 
&\text{for every } s\in\{0,1,\ldots,n-1\}, \text{ the triples }\\
&(N,\mathcal{A},t\mathcal{L}_1),
(N,\mathcal{B}_s,t\mathcal{L}_2),
(N,\mathcal{A}\oplus\mathcal{B}_s,t\mathcal{L}_1\oplus t\mathcal{L}_2)
\text{are Hadamard triples}
\Big\}.
\end{align*}
Our results are as follows, which answer the questions posed above.

\begin{theorem}\label{thm:1.7}
    Let $(N,\mathcal{D},\mathcal{L}_1\oplus\mathcal{L}_2)$ be a product-form Hadamard triple with $N>\#\mathcal{D}$. Then for any $s\in [0,\frac{\log \#\mathcal{D}}{\log N}]$, the spectral eigen-subspace $$V^{(s)}(\mu_{N,\mathcal{D}},\mathcal{T}_*):=\{\Lambda \in V(\mu_{N,\mathcal{D}},\mathcal{T}_*):\dim_{Be}(\Lambda)=s\}$$ has the cardinality of the continuum.
\end{theorem}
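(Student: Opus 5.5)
Here is how I would proceed. I will build spectra explicitly as digit expansions along a tree, following the An--Lai construction for product-form triples, and then introduce the freedom needed for continuum cardinality and for prescribed Beurling dimension. Write $\mu=\mu_{N,\mathcal D}$ with $\mathcal D=\bigcup_s(a_s+N^r\mathcal B_s)$. Its Fourier transform factors as $\widehat\mu(\xi)=\prod_{j\ge1}\widehat\delta_{\mathcal D}(N^{-j}\xi)$. The product structure gives the key identity
\[
\widehat\delta_{\mathcal D}(\xi)=\frac{1}{\#\mathcal D}\sum_{s}e^{-2\pi i a_s\xi}\,\#\mathcal B_s\,\widehat\delta_{\mathcal B_s}(N^r\xi).
\]
Combining this with conditions (i)--(ii), the zero set of $\widehat\mu$ contains $\bigcup_j N^{j}\big(N^{-1}((\mathcal L_1\oplus\mathcal L_2)-(\mathcal L_1\oplus\mathcal L_2))\setminus\mathbb Z\big)$ together with the shifted versions coming from the $N^{r}$-scaled block.

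For $t\in\mathcal T_*$, the same holds with $\mathcal L_i$ replaced by $t\mathcal L_i$, because all three families of triples remain Hadamard. The plan is to take, as in An--Lai, candidate spectra of the form
\[
\Lambda_\omega=\Big\{\sum_{k=0}^{m}N^{k}\ell_k+N^{m+1}c_{m}(\ell_0,\dots,\ell_m):\ \ell_k\in\mathcal L_1\oplus\mathcal L_2,\ m\ge0\Big\},
\]
suitably modified on the $N^r$-shifted levels. Here $c_m$ are integer correction terms chosen along the tree. The parameter $\omega$ encodes a choice of a sparse set of levels at which the digit set is "frozen" to $\{0\}$ (i.e.\ taking $\ell_k=0$ is forced), alternating with levels carrying full digits.

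The first step is mutual orthogonality. It holds for $\Lambda_\omega$ and for $t\Lambda_\omega$, for every $t\in\mathcal T_*$, because differences of elements land in the zero set described above; this is exactly where the definition of $\mathcal T_*$ enters.

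The second step is completeness, and it is the main obstacle. I would use the standard criterion: $\Lambda$ is a spectrum iff $Q(\xi)=\sum_{\lambda\in\Lambda}|\widehat\mu(\xi+\lambda)|^2\equiv1$. Through the tree/maximal-tree approach of Dai--He--Lau, this reduces to showing that along every infinite path the tail products $|\widehat\mu(N^{-m}\lambda_m+\cdot)|$ stay bounded below. For that I would choose the correction terms $c_m$, and simultaneously $tc_m$ for all $t\in\mathcal T_*$, so that $N^{-m}\lambda$ stays in a fixed compact neighborhood of $0$ avoiding the zeros of $\widehat\mu$, uniformly in $t$. This is the delicate point, because $\mathcal T_*$ is infinite: a single choice must work for every $t$. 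I would follow Lu's strategy. Freezing blocks of zero digits of growing length $\to\infty$ makes the remainder $N^{-m}\lambda$ tiny relative to any fixed $t$. Enumerating $\mathcal T_*=\{t_1,t_2,\dots\}$ and handling $t_1,\dots,t_k$ at stage $k$ then yields the uniform lower bound eventually for each $t$. The condition $N>\#\mathcal D$ guarantees that $\mathcal L_1\oplus\mathcal L_2$ can be chosen inside a complete residue system avoiding problematic residues (in particular, with $0\in\mathcal L$ and without the whole class), so frozen levels do not destroy completeness. It also leaves room for $\dim_{Be}$ strictly below the maximum.

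The third step computes the Beurling dimension. If the set of non-frozen levels $\Omega\subset\mathbb N$ has upper density $\limsup_n\#(\Omega\cap[1,n])/n=\theta$, then the count of $\Lambda_\omega$ in intervals of length $N^n$ is about $(\#\mathcal D)^{\#(\Omega\cap[1,n])}$, uniformly in the position, since translations only shift digits. This gives $\dim_{Be}\Lambda_\omega=\theta\log\#\mathcal D/\log N$. Choosing $\theta=s\log N/\log\#\mathcal D\in[0,1]$ hits every $s$, and the frozen blocks needed in step two have density zero, so they do not affect $\theta$. Finally, for continuum cardinality, I would vary $\Omega$ over a continuum family of subsets with the same upper density, e.g.\ by toggling membership of one level in each of infinitely many disjoint distant windows. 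Distinct choices give distinct sets, even up to translation, since $0\in\Lambda_\omega$ and their digit patterns differ.
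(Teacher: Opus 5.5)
The overall architecture (An--Lai digit tree, integer corrections, an enumeration of $\mathcal T_*$ with long blocks) matches the paper. However, the mechanism you use for both the dimension and the continuum fails, because forcing $\ell_k=0$ on frozen levels destroys completeness. Lemma 2.1, which drives completeness, needs a full digit set at every level, i.e. $\widetilde\Gamma_{p_j}\equiv\Gamma_{p_j}\pmod{N^{p_j}}$. Already for $\mu_{4,\{0,2\}}$ (the degenerate product-form case $\mathcal B_s=\{0\}$), freezing level $0$ puts $\Lambda_\omega$ inside $4\mathbb Z$. Its exponentials are then $\frac14$-periodic and cannot separate the cylinders $\frac14K$ and $\frac12+\frac14K$, so $E(\Lambda_\omega)$ is incomplete.

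The paper never deletes digits; all of $\Gamma_{p_n}$ is kept.
\begin{itemize}
\item Only the digits in a chosen subset $C_{p_n}$ receive bounded corrections ($|k|\le N^{m_t}$, Theorem 4.2). These form $\widetilde\Lambda$, which carries the dimension $s$.
\item The other digits receive huge, mutually $2$-lacunary corrections. These form $\overline\Lambda$, of Beurling dimension $0$, and this needs Corollary 4.3 (admissible corrections exist outside any finite set).
\item The continuum comes from two admissible corrections at a node on each level (Theorem 4.2, resting on Lemmas 3.4--3.5). This is where $N>\#\mathcal D$ really enters: $\dim_H K<1$ forces the periodized measure $\nu_\xi$ on $\mathbb T$ to have at least two nonzero Fourier coefficients.
\end{itemize}
The role you assign to $N>\#\mathcal D$ (choosing $\mathcal L$ to avoid ``problematic residues'') does not correspond to an actual argument.

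Second, your completeness target is unattainable in the product-form setting. We have $\widehat\delta_{N^{-1}\mathcal D}(x)=\frac1n\sum_s e^{-2\pi i a_sx/N}M_{\mathcal B_s}(x)$, with each $M_{\mathcal B_s}$ being $1$-periodic. Hence $\widehat\mu$ vanishes at every integer translate of a common zero of the $M_{\mathcal B_s}$, and the $\mathcal L_2$-digits land exactly there. In Example 1.6 ($\mathcal B_s=\{0,1\}$, $\mathcal L_2=\{0,4\}$) one has $8^{-q}(4\cdot8^{q-1}+8^qc)=\frac12+c$. This gives $|\widehat\mu_{>q}(\xi+4\cdot8^{q-1}+8^qc)|\le|\sin(\pi8^{-q}\xi)|$ for every integer $c$, and this residue class must occur in any full-digit construction. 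So no choice of corrections keeps the tails uniformly bounded below.

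What must be proved instead is the weighted bound of Proposition 2.2(ii), $|\widehat\mu_{>q}(\xi+t\lambda)|^2\ge c_t\,\frac1n\sum_s|M_{N^{-q}\mathcal B_s}(\xi+t\lambda)|^2$. Producing corrections that achieve it for each $t\in\mathcal T_*$ is the heart of the paper, and your proposal has nothing in its place. The paper does this in four steps:
\begin{itemize}
\item factoring out the common zeros $F$;
\item the identities of Lemma 3.1, which is where the three Hadamard conditions defining $\mathcal T_*$ are actually used;
\item the two-index estimate of Lemma 3.3;
\item Proposition 4.1 and Theorem 4.2.
\end{itemize}

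Finally, ``handling $t_1,\dots,t_k$ at stage $k$'' would need one integer correction admissible for several dilations at once, which you do not justify. The paper instead assigns a single $t=a_n$ to each stage and lists each $t$ infinitely often. It takes $p_n$ so large that $|a_nN^{-q_n}\lambda_{n-1}|<\delta_{a_n}$; this, not frozen zero blocks, is what makes earlier contributions negligible. It then needs the weighted bound only along the levels where $a_n=t$.
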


The above theorem shows that the Product-form Hadamard multiplier set defined in this paper is contained in the spectral eigenvalue set of $\mu$. A natural question is whether every spectral eigenvalue of $\mu$ must arise in this way. Four-digit spectral self-similar measures have long been an important class of objects in the study of spectral measures, and their spectrality was completely characterized by An, He and Lai \cite{AHLFour}. In what follows, we give a complete discussion of their spectral eigenvalues.

Let $\beta \geq 1$, the sets $ \mathcal O:=2\mathbb{Z}+1$ and
$$\mathcal O/\mathcal O=\left\{\frac{u}{v}:u,v\in\mathcal O\right\}.$$
Define $N=2^\beta m$ and the digit set 
$$D=\{0,a,2^\tau\ell_1,a+2^\tau\ell_2\}$$
where $m, a,\ell_1,\ell_2\in\mathcal O $ and $\tau=\beta k+r$ for some $k\in \mathbb Z_{\ge0}$ and $0<r<\beta$.

\begin{theorem}\label{thm:four-digit-application}
Let $\mu=\mu_{N,D}$ be a singular spectral self-similar measure in the normalized four-digit form above, and assume $N>4$. Then
$$
E_2(\mu)=\{t\in\mathbb R\setminus\{0\}:V(\mu,\{1,t\})\ne\emptyset\}=\mathcal O/\mathcal O.
$$
More generally, for every $S\subset \mathbb R\setminus\{0\}$,
$$
V(\mu,S)\ne\emptyset
\quad\Longleftrightarrow\quad
S\subset T^{-1}\mathcal O
\quad\text{for some } T\in\mathcal O.
$$
Whenever this condition holds, for each
$0\leq s\leq \log 4/\log N$, the set
$$
\{\Lambda\in V(\mu,S):\dim_{Be}(\Lambda)=s\}
$$
has the cardinality of the continuum.
\end{theorem}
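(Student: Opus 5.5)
The plan is to prove the two directions of the characterization separately, and then get the continuum statement from \Cref{thm:1.7}.

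\emph{Necessity.} Suppose $\Lambda\in V(\mu,S)$, and normalize by translation so that $0\in\Lambda$. Then $\Lambda\setminus\{0\}\subset\mathcal Z(\widehat\mu)$. For the four-digit measure, $\widehat\delta_{N^{-1}D}(\xi)$ factors as
\[
\frac14\bigl(1+e^{2\pi i a\xi/N}\bigr)\bigl(1+e^{2\pi i 2^\tau\ell_1\xi/N}\bigr)
\]
up to the standard rearrangement used by An--He--Lai \cite{AHLFour}. Hence
\[
\mathcal Z(\widehat\mu)\subset\bigcup_{j\ge1}\frac{N^j}{2^{\sigma}}\,\mathcal O \quad\text{for } \sigma\in\{1,\tau+1\},
\]
so every nonzero element of $\Lambda$ has the form $2^{e}\cdot(\text{odd})\cdot m^{j}$ with some $2$-adic exponent $e$. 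Fix $\lambda_0\in\Lambda\setminus\{0\}$ and $t\in S$. Then $t\lambda_0\in\mathcal Z(\widehat\mu)$, and taking ratios shows $t\in\mathbb Q$ with $t=2^{k}u/v$, where $u,v\in\mathcal O$.

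To rule out $k\ne0$, use the $2$-adic valuation pattern. A spectrum must contain, for the first level, elements whose $2$-adic valuations realize both the ``$a$-type'' zeros (valuation $\equiv \beta j-1$) and the ``$\ell_1$-type'' zeros (valuation $\equiv \beta j-\tau-1$). Otherwise completeness fails, by the standard argument that $\sum_\lambda|\widehat\mu(\xi+\lambda)|^2\equiv1$ forces mutual orthogonality classes of all four residue types. Multiplication by $2^{k}$ shifts all valuations by $k$. Because $0<r<\beta$, the two residue classes modulo $\beta$ are distinct, and the shifted set $t\Lambda$ cannot land entirely in $\mathcal Z(\widehat\mu)\cup\{0\}$ with the required pattern unless $k=0$. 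I expect this valuation bookkeeping to be the main obstacle; I would first try to show directly that $\Lambda-\Lambda$ contains elements of valuation exactly $\beta-1$ (mod $\beta$) and of valuation $\beta-1-r$ (mod $\beta$), and then check that $t\Lambda-t\Lambda\subset\mathcal Z\cup\{0\}$ forces $k\equiv0$ and $k\equiv0$ simultaneously on both classes, which gives $k=0$.

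\emph{Common denominator.} Having $S\subset\mathcal O/\mathcal O$, we still need a common $T$. I would show that if $t=u/v$ in lowest terms, then $v$ divides the odd part of every element of $\Lambda$ modulo $m$-powers, up to a finite exceptional set. Then $T:=\operatorname{lcm}$ of all denominators must divide a fixed odd integer $w$, namely the odd part of $\lambda_0$ with the $m$-powers stripped after normalizing. This gives $S\subset w^{-1}\mathcal O$.

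\emph{Sufficiency and continuum.} Write $\mu$ in product form, with $\mathcal A=\{0,a\}$ and $\mathcal B=\{0,\ell_1\}$ or $\{0,\ell_2\}$, $r$ the shift, and $\mathcal L_1,\mathcal L_2$ the appropriate $\frac N2$-type sets as in \cite{AL01}. One checks that $\mathcal O\subset\mathcal T_*$, since multiplying the $\mathcal L$'s by an odd integer preserves each Hadamard condition; the masks only depend on odd multiples of $N/2$. By \Cref{thm:1.7}, $V^{(s)}(\mu,\mathcal O)$ has continuum cardinality for each admissible $s$. Given $S\subset T^{-1}\mathcal O$ with $T\in\mathcal O$, set $\Lambda'=T\Lambda$ for $\Lambda\in V^{(s)}(\mu,\mathcal O)$. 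Then $\Lambda'$ is a spectrum since $T\in\mathcal O$, and $t\Lambda'=(tT)\Lambda$ with $tT\in\mathcal O$, so $\Lambda'\in V(\mu,S)$. Dilation preserves Beurling dimension, and the map $\Lambda\mapsto T\Lambda$ is injective, so the continuum claim follows. In particular, $E_2(\mu)=\mathcal O/\mathcal O$: for $t=u/v$, take $T=v$.
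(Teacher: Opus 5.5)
\textbf{The gap is in necessity, at the step that should force $v_2(t)=0$.} Your plan reduces $2$-adic valuations modulo $\beta$ and then infers ``$v_2(t)\equiv 0$, hence $v_2(t)=0$''. That inference is invalid. Residue data cannot see the shift $v_2(t)\mapsto v_2(t)+\beta$, which just moves the valuations $\beta j-1$ and $\beta j-\tau-1$ to level $j+1$. When $\beta=2r$, the shift $v_2(t)=r$ even swaps the two classes $-1$ and $-r-1$. For $\beta=2$, $\tau=1$, the admissible valuations $\beta j-1$ and $\beta j-\tau-1$ ($j\ge1$) already exhaust all nonnegative integers, so no residue bookkeeping can exclude $v_2(t)=1$.

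The missing idea is an \emph{order} argument. The level-one branch $R_3=\frac{N}{2^{\tau+1}p_3}\mathcal O$ is the unique part of $\mathcal Z(\widehat\mu)$ with the strictly minimal valuation $\beta-\tau-1$, and every spectrum through $0$ meets it.
\begin{itemize}
\item Applied to $\Lambda$: you get $\lambda_0\in\Lambda\cap R_3$ with $t\lambda_0\in\mathcal Z(\widehat\mu)$, so $v_2(t)\ge0$.
\item Applied to the spectrum $t\Lambda$: you get $\gamma_0\in\Lambda\setminus\{0\}$ with $t\gamma_0\in R_3$. Since $\gamma_0\in\mathcal Z(\widehat\mu)$, this forces $v_2(t)\le0$.
\end{itemize}
The fact that every spectrum meets $R_3$ is a genuine completeness statement, and your appeal to ``the standard argument'' does not supply it. The paper proves it in two steps. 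First, an orthogonal set through $0$ cannot meet both $R_1\setminus R_2$ and $R_2\setminus R_1$, where $R_i=\frac{N}{2p_i}\mathcal O$. Second, it exhibits a nonzero $\eta_i=f_i\mu$, a signed first-level digit combination convolved with $\mu_{>1}$, whose Fourier transform would vanish on all of $\Gamma$ if $\Gamma\cap R_3=\varnothing$.

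Once $v_2(t)=0$ is known, $t\lambda_0$ again has valuation $\beta-\tau-1$, so it lies in $R_3$. Writing $\lambda_0=\frac{N}{2^{\tau+1}p_3}M$ then gives $t\in M^{-1}\mathcal O$ for every $t\in S$ at once. So the common denominator is immediate, and your unproved claim that $v$ divides the odd part of every element of $\Lambda$ is not needed.

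Two further corrections.
\begin{itemize}
\item $M_D$ does not factor as $\frac14(1+e^{2\pi i a\xi/N})(1+e^{2\pi i 2^\tau\ell_1\xi/N})$ unless $\ell_1=\ell_2$. Its zero set is $\frac1{2p_1}\mathcal O\cup\frac1{2p_2}\mathcal O\cup\frac1{2^{\tau+1}p_3}\mathcal O$. The valuations you use survive, but the odd denominators $p_i$ matter.
\item $D$ is not in product form with $\mathcal B_s=\{0,\ell_s\}$. Writing $\tau=\beta k+r$, the correct blocks are $\{0,2^r\ell_s\}$ at scale $N^k$. This requires passing to the odd dilate $m^kD$, and in the paper also blocking $k$ levels into the system $(N^k,\Delta)$, before Theorem~\ref{thm:1.7} applies. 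The case $k=0$ is an ordinary Hadamard triple, handled by Lu's theorem.
\end{itemize}
With these fixes, your dilation step $\Lambda'=T\Lambda$ for sufficiency and the continuum count is the same as the paper's.
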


The paper is organized as follows. In Section 2, we collect the necessary preliminaries and establish an important criterion that will be used throughout the paper. Section 3 contains several technical lemmas, which play a crucial role in the subsequent arguments. In Section 4, we prove Theorem \ref{thm:1.7}. Finally, in Section 5, we prove Theorem \ref{thm:four-digit-application}.

\section{Preliminaries}
We gather in this section the notation and several standard facts that will be used repeatedly in the proof of our main results. Let $\mu$ be a compactly supported Borel probability measure on $\mathbb R$. We use the convention
\[
\widehat{\mu}(\xi)
=
\int_{\mathbb R} e^{-2\pi i\xi x}\,d\mu(x),
\qquad \xi\in\mathbb R.
\]
The zero set of $\widehat{\mu}$ will be denoted by
\[
\mathcal Z(\widehat{\mu})
=
\{\xi\in\mathbb R:\widehat{\mu}(\xi)=0\}.
\]
This zero set provides the basic criterion for exponential orthogonality. Indeed, if
\[
E(\Lambda)=\{e^{2\pi i\lambda x}:\lambda\in\Lambda\},
\]
then a discrete set $\Lambda\subset\mathbb R$ is an orthogonal set in $L^2(\mu)$ if and only if
\[
(\Lambda-\Lambda)\setminus\{0\}
\subset
\mathcal Z(\widehat{\mu}).
\]
This criterion will be used frequently in the construction and verification of spectra.

For a finite set $E\subset\mathbb R$, we write
\[
M_E(\xi)=\widehat{\delta_E}(\xi)
=
\frac1{\#E}\sum_{e\in E}e^{-2\pi i e\xi}.
\]
By \eqref{eq:1.1}, the self-similar measure $\mu=\mu_{N,\mathcal{D}}$ satisfies the following infinite product formula
\[
\widehat{\mu}(\xi)
=
\prod_{j=1}^{\infty}
M_{\mathcal D}\left(\frac{\xi}{N^j}\right)
=
\prod_{j=1}^{\infty}
\widehat{\delta}_{N^{-j}\mathcal D}(\xi).
\]
Consequently,
\[
\mathcal Z(\widehat{\mu})
=
\bigcup_{j=1}^{\infty}
N^j\mathcal Z(M_{\mathcal D}).
\]

For each $p\geq 1$, we decompose $\mu$ as
\[
\mu=\mu_p*\mu_{>p},
\]
where
\[
\mu_p
=
\delta_{N^{-1}\mathcal D}
*
\delta_{N^{-2}\mathcal D}
*
\cdots
*
\delta_{N^{-p}\mathcal D}
\]
is the finite-level approximation and
\[
\mu_{>p}
=
\delta_{N^{-(p+1)}\mathcal D}
*
\delta_{N^{-(p+2)}\mathcal D}
*
\cdots
\]
is the tail measure. Hence
\[
\widehat{\mu}(\xi)
=
\widehat{\mu}_p(\xi)\widehat{\mu}_{>p}(\xi).
\]

Let
\[
\mathcal L=\mathcal L_1\oplus\mathcal L_2
\]
and, for $p\geq 1$, define
\[
\Gamma_p
=
\mathcal L+N\mathcal L+\cdots+N^{p-1}\mathcal L.
\]
Let $\{p_k\}_{k=1}^{\infty}$ be a sequence of positive integers. Set
\[
q_0=0,
\qquad
q_k=p_1+\cdots+p_k,\quad k\geq 1.
\]
We define a sequence of finite frequency sets inductively as follows. Let $\Lambda_0=\{0\}$ and
\[
\Lambda_{q_k}
=
\widetilde{\Gamma}_{p_1}
+
N^{q_1}\widetilde{\Gamma}_{p_2}
+
\cdots
+
N^{q_{k-1}}\widetilde{\Gamma}_{p_k},
\]
where
\[
0\in \widetilde{\Gamma}_{p_j},
\qquad
\widetilde{\Gamma}_{p_j}\equiv \Gamma_{p_j}
\pmod{N^{p_j}},
\qquad 1\leq j\leq k.
\]
Then
\[
\Lambda_{q_k}\subset \Lambda_{q_{k+1}},
\]
and we put
\begin{equation}\label{eq:2.1}
\Lambda
=
\bigcup_{k=1}^{\infty}
\Lambda_{q_k}.
\end{equation}

The following finite-level identity is the main tool for verifying spectrality later. It can be deduced from \cite[Lemma~4.2]{AL01}, but we include the proof for completeness.

\begin{lemma}\label{lem:2.1}
Let $s\in\{0,1,\ldots,n-1\}$. Then, for every $k\geq 1$ and every $\xi\in\mathbb R$,
$$
\sum_{\lambda\in\Lambda_{q_k}}
\left|\widehat{\mu}_{q_k}(\xi+\lambda)\right|^2
\left|
M_{N^{-q_k}\mathcal B_s}(\xi+\lambda)
\right|^2
=
\frac1n
\sum_{i=0}^{n-1}
\left|M_{\mathcal B_i}(\xi)\right|^2.
$$
\end{lemma}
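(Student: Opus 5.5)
The plan is to strip away the block structure using periodicity, so that only the full sets $\Gamma_q$ remain, and then to induct on the number of levels $q$. The induction rests on a single one-step identity that uses the product-form Hadamard conditions (i)–(ii).

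\emph{Step 1: reduction to $\Gamma_{q_k}$.} All digits are integers, so $|\widehat{\mu}_{q}(\xi)|^2$ and $|M_{N^{-q}\mathcal B_s}(\xi)|^2=|M_{\mathcal B_s}(\xi/N^{q})|^2$ are both $N^{q}$-periodic in $\xi$. Since $\Gamma_p+N^p\Gamma_{p'}=\Gamma_{p+p'}$ and $\widetilde\Gamma_{p_j}\equiv\Gamma_{p_j}\pmod{N^{p_j}}$, we get
$$\Lambda_{q_k}\equiv \Gamma_{p_1}+N^{q_1}\Gamma_{p_2}+\cdots+N^{q_{k-1}}\Gamma_{p_k}=\Gamma_{q_k}\pmod{N^{q_k}}.$$
This congruence is a bijection of residues, because $\#\Gamma_q=(\#\mathcal L)^q$ and the elements of $\Gamma_q$ are distinct mod $N^q$; the latter follows from the Hadamard property of $(N,\mathcal A\oplus\mathcal B_s,\mathcal L)$, since $\mathcal L$ is a complete set of distinct residues mod $N$ within the relevant orthogonality. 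Hence we may replace $\Lambda_{q_k}$ by $\Gamma_{q}$ with $q=q_k$. It then suffices to show that for every $q\ge1$,
$$G^s_q(\xi):=\sum_{\lambda\in\Gamma_q}|\widehat\mu_q(\xi+\lambda)|^2\,|M_{\mathcal B_s}((\xi+\lambda)/N^q)|^2=\frac1n\sum_{i=0}^{n-1}|M_{\mathcal B_i}(\xi)|^2=:\Phi(\xi).$$

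\emph{Step 2: recursion.} Write $\Gamma_q=\mathcal L+N\Gamma_{q-1}$ and $\widehat\mu_q(\xi)=M_{\mathcal D}(\xi/N)\,\widehat\mu_{q-1}(\xi/N)$. Put $\eta=(\xi+l)/N$. Because $M_{\mathcal D}$ is $1$-periodic, $M_{\mathcal D}(\eta+\lambda')=M_{\mathcal D}(\eta)$, and therefore
$$G^s_q(\xi)=\sum_{l\in\mathcal L}\Big|M_{\mathcal D}\Big(\frac{\xi+l}{N}\Big)\Big|^2\,G^s_{q-1}\Big(\frac{\xi+l}{N}\Big),\qquad G^s_0=|M_{\mathcal B_s}|^2 .$$
Both the base case $q=1$ and the inductive step (where $G^s_{q-1}=\Phi$) then follow from a single one-step identity: for every $s$, and also with $|M_{\mathcal B_s}|^2$ replaced by its average $\Phi$,
$$\sum_{l\in\mathcal L}\Big|M_{\mathcal D}\Big(\frac{\xi+l}{N}\Big)\Big|^2\Big|M_{\mathcal B_s}\Big(\frac{\xi+l}{N}\Big)\Big|^2=\Phi(\xi). \tag{$*$}$$
The averaged version follows from $(*)$ by averaging over $s$.

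\emph{Step 3: the one-step identity $(*)$, the main obstacle.} I would expand
$$M_{\mathcal D}(x)=\frac1n\sum_{t}e^{-2\pi i a_t x}M_{\mathcal B_t}(N^r x),$$
where all $\mathcal B_t$ have the common size $\#\mathcal L_2$, and split $l=l_1+l_2$ with $l_i\in\mathcal L_i$. Then I would carry out the summation in two stages.
\begin{itemize}
\item Sum over $l_1\in\mathcal L_1$ first, using that $(N,\mathcal A,\mathcal L_1)$ is Hadamard. This kills the cross terms $t\neq t'$ and leaves a diagonal average $\frac1n\sum_t|M_{\mathcal B_t}(\cdot)|^2$.
\item Then sum over $l_2\in\mathcal L_2$, using $(N,\mathcal B_t,\mathcal L_2)$ and $(N,\mathcal A\oplus\mathcal B_t,\mathcal L_1\oplus\mathcal L_2)$ together with the factor $|M_{\mathcal B_s}((\xi+l)/N)|^2$. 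This should produce $\Phi(\xi)$, because the $N^r$-dilated and undilated copies of the $\mathcal B$'s recombine via $\frac{\xi+l}{N}\mapsto$ the next level.
\end{itemize}
This is essentially \cite[Lemma~4.2]{AL01}. The delicate point is handling general $r$ and the cross terms between different $s$ and $t$, and I would first carry out the case $r=0$ to see the cancellation explicitly. Once $(*)$ is established, the induction on $q$ closes and Step 1 transfers the result to $\Lambda_{q_k}$.
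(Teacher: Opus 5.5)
\textbf{There is a genuine gap in Step 1: the reduction of $\Lambda_{q_k}$ to $\Gamma_{q_k}$ is false once $k\ge 2$.}

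An element of $\widetilde\Gamma_{p_j}$ has the form $\gamma_j+N^{p_j}m_j$, with $\gamma_j\in\Gamma_{p_j}$ and $m_j$ an arbitrary integer. Inside $\Lambda_{q_k}$ it therefore contributes a carry $N^{q_j}m_j$. This carry is divisible by $N^{q_j}$ but not by $N^{q_k}$. For example, modulo $N^{q_2}$ the elements of $\Lambda_{q_2}$ are $\gamma_1+N^{p_1}(\gamma_2+m_1(\gamma_1))$. So above each $\gamma_1$ you see a translate of $\Gamma_{p_2}$, not $\Gamma_{p_2}$ itself.

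Concretely, take the paper's example $N=8$, $\mathcal L=\{0,1\}\oplus\{0,4\}=\{0,1,4,5\}$. Let $p_1=p_2=1$, $\widetilde\Gamma_{p_1}=\{0,9,4,5\}$ and $\widetilde\Gamma_{p_2}=\mathcal L$. Then $17=9+8\cdot 1\in\Lambda_{q_2}$, but $17$ is not congruent modulo $64$ to any element of $\Gamma_2=\mathcal L+8\mathcal L$. In Section 4 the integers $k_{c,p}$ are deliberately chosen large, so such carries are the typical situation. Your periodicity argument is correct only for a single block ($k=1$). In general $\Lambda_{q_k}$ cannot be replaced by $\Gamma_{q_k}$, so the reduction to the single function $G^s_{q_k}$ is invalid.

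\textbf{How to repair it.} Induct on blocks rather than on digits, as the paper does. The single-block identity $G^s_p=\Phi$ (i.e.\ \cite[Lemma~4.2]{AL01}) holds at every real point, and by $N^p$-periodicity it holds for any lift $\widetilde\Gamma_p$. Then:
\begin{enumerate}
\item Write $\Lambda_{q_k}=\Lambda_{q_{k-1}}+N^{q_{k-1}}\widetilde\Gamma_{p_k}$.
\item Factor $\widehat\mu_{q_k}(\xi+\lambda+N^{q_{k-1}}\ell)=\widehat\mu_{q_{k-1}}(\xi+\lambda)\,\widehat\mu_{p_k}\bigl((\xi+\lambda)/N^{q_{k-1}}+\ell\bigr)$.
\item Apply the single-block identity at the real point $(\xi+\lambda)/N^{q_{k-1}}$. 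This is exactly where the carries hidden in $\lambda$ become harmless.
\item Finish with the induction hypothesis, averaged over $s$.
\end{enumerate}

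\textbf{Step 3 also needs correcting.} Your Steps 2 and 3 can serve as a proof of the single-block identity, but the first stage of Step 3 does not work as described. The factor $|M_{\mathcal B_s}((\xi+l)/N)|^2$ depends on $l_1$, so summing over $l_1$ using $(N,\mathcal A,\mathcal L_1)$ alone does not remove the $t\ne t'$ terms. With the paper's normalization $r=1$, you have $M_{\mathcal D}((\xi+l)/N)=\frac1n\sum_t e^{-2\pi i a_t(\xi+l)/N}M_{\mathcal B_t}(\xi)$. Then $(*)$ follows in one step from column orthogonality of $(N,\mathcal A\oplus\mathcal B_s,\mathcal L)$: for $b,b'\in\mathcal B_s$, the sum $\sum_{l\in\mathcal L}e^{-2\pi i(a_t+b-a_{t'}-b')l/N}$ equals $\#\mathcal L=n\#\mathcal B_s$ if $t=t'$ and $b=b'$, and $0$ otherwise.
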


\begin{proof}
We argue by induction on $k$. When $k=1$, we have $q_1=p_1$ and
$$
\Lambda_{q_1}
=
\widetilde{\Gamma}_{p_1}
\equiv
\Gamma_{p_1}
\pmod{N^{p_1}}.
$$
Thus the desired identity follows directly from \cite[Lemma~4.2]{AL01}.

Assume that the assertion holds for $k-1$. We prove it for $k$. Since
$$
\Lambda_{q_k}
=
\Lambda_{q_{k-1}}
+
N^{q_{k-1}}\widetilde{\Gamma}_{p_k},
$$
every element of $\Lambda_{q_k}$ can be written in the form
$$
\lambda+N^{q_{k-1}}\ell,
\qquad
\lambda\in\Lambda_{q_{k-1}},
\quad
\ell\in\widetilde{\Gamma}_{p_k}.
$$
Using the finite-level decomposition of the Fourier transform, we have
$$
\widehat{\mu}_{q_k}
\left(\xi+\lambda+N^{q_{k-1}}\ell\right)
=
\widehat{\mu}_{q_{k-1}}(\xi+\lambda)
\widehat{\mu}_{p_k}
\left(
\frac{\xi+\lambda}{N^{q_{k-1}}}+\ell
\right).
$$
Moreover,
$$
M_{N^{-q_k}\mathcal B_s}
\left(
\xi+\lambda+N^{q_{k-1}}\ell
\right)
=
M_{N^{-p_k}\mathcal B_s}
\left(
\frac{\xi+\lambda}{N^{q_{k-1}}}+\ell
\right).
$$
Therefore, by \cite[Lemma~4.2]{AL01},
\begin{align*}
&\sum_{\eta\in\Lambda_{q_k}}
\left|\widehat{\mu}_{q_k}(\xi+\eta)\right|^2
\left|M_{N^{-q_k}\mathcal B_s}(\xi+\eta)\right|^2\\
=&
\sum_{\lambda\in\Lambda_{q_{k-1}}}
\left|\widehat{\mu}_{q_{k-1}}(\xi+\lambda)\right|^2
\sum_{\ell\in\widetilde{\Gamma}_{p_k}}
\left|
\widehat{\mu}_{p_k}
\left(
\frac{\xi+\lambda}{N^{q_{k-1}}}+\ell
\right)
\right|^2
\left|
M_{N^{-p_k}\mathcal B_s}
\left(
\frac{\xi+\lambda}{N^{q_{k-1}}}+\ell
\right)
\right|^2
\\
=&
\sum_{\lambda\in\Lambda_{q_{k-1}}}
\left|\widehat{\mu}_{q_{k-1}}(\xi+\lambda)\right|^2
\frac1n
\sum_{j=0}^{n-1}
\left|
M_{\mathcal B_j}
\left(
\frac{\xi+\lambda}{N^{q_{k-1}}}
\right)
\right|^2
\\
=&
\frac1n
\sum_{j=0}^{n-1}
\sum_{\lambda\in\Lambda_{q_{k-1}}}
\left|\widehat{\mu}_{q_{k-1}}(\xi+\lambda)\right|^2
\left|
M_{N^{-q_{k-1}}\mathcal B_j}(\xi+\lambda)
\right|^2.
\end{align*}
Applying the induction hypothesis to each $j=0,1,\ldots,n-1$, we get
\begin{align*}
\sum_{\eta\in\Lambda_{q_k}}
\left|\widehat{\mu}_{q_k}(\xi+\eta)\right|^2
\left|M_{N^{-q_k}\mathcal B_s}(\xi+\eta)\right|^2=
\frac1n
\sum_{j=0}^{n-1}
\left(
\frac1n
\sum_{i=0}^{n-1}
\left|M_{\mathcal B_i}(\xi)\right|^2
\right)=
\frac1n
\sum_{i=0}^{n-1}
\left|M_{\mathcal B_i}(\xi)\right|^2.
\end{align*}
This completes the proof.
\end{proof}

The following proposition is a key ingredient in the verification of spectrality. It follows from Lemma~\ref{lem:2.1} and \cite[Proposition~4.3]{AL01}; hence its proof is omitted.

\begin{proposition}\label{prop:2.2}
For any $\Lambda$ defined by \eqref{eq:2.1}, the following assertions hold.

\noindent\emph{(i)} For every $\xi\in\mathbb R$,
$$
\sum_{\lambda\in\Lambda}
\left|\widehat{\mu}(\xi+\lambda)\right|^2
\leq
\frac1n
\sum_{s=0}^{n-1}
\left|M_{\mathcal B_s}(\xi)\right|^2.
$$

\noindent\emph{(ii)} Suppose that there exists a constant $c>0$ such that, for every $k\geq 1$, every $\lambda\in\Lambda_{q_k}$, and every $\xi\in[0,1]$,
$$
\left|\widehat{\mu}_{>q_k}(\xi+\lambda)\right|^2
\geq
\frac{c}{n}
\sum_{s=0}^{n-1}
\left|
M_{N^{-q_k}\mathcal B_s}(\xi+\lambda)
\right|^2.
$$
Then, for every $\xi\in[0,1]$,
$$
\sum_{\lambda\in\Lambda}
\left|\widehat{\mu}(\xi+\lambda)\right|^2
=
\frac1n
\sum_{s=0}^{n-1}
\left|M_{\mathcal B_s}(\xi)\right|^2.
$$
\end{proposition}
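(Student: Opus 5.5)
The two parts can be handled by letting $k\to\infty$ in Lemma~\ref{lem:2.1}. Part (i) follows from Fatou's lemma for sums, and part (ii) from dominated convergence, with the hypothesis of (ii) supplying the dominating function. Throughout, fix $\xi$. Note that the right-hand side of Lemma~\ref{lem:2.1} does not depend on $s$. Averaging Lemma~\ref{lem:2.1} over $s\in\{0,\ldots,n-1\}$ therefore gives, for every $k\ge1$,
$$
\sum_{\lambda\in\Lambda_{q_k}}
|\widehat{\mu}_{q_k}(\xi+\lambda)|^2\,
\frac1n\sum_{s=0}^{n-1}|M_{N^{-q_k}\mathcal B_s}(\xi+\lambda)|^2
=
\frac1n\sum_{i=0}^{n-1}|M_{\mathcal B_i}(\xi)|^2 .
$$
Define $f_k(\lambda)$ to be the summand above when $\lambda\in\Lambda_{q_k}$, and $f_k(\lambda)=0$ when $\lambda\in\Lambda\setminus\Lambda_{q_k}$.

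\textbf{Pointwise convergence.} Fix $\lambda\in\Lambda$. Since $\Lambda_{q_k}$ increases to $\Lambda$, we have $\lambda\in\Lambda_{q_k}$ for all large $k$. As $k\to\infty$:
\begin{itemize}[label={}]
\item $N^{-q_k}(\xi+\lambda)\to0$, so $M_{N^{-q_k}\mathcal B_s}(\xi+\lambda)=M_{\mathcal B_s}(N^{-q_k}(\xi+\lambda))\to M_{\mathcal B_s}(0)=1$ by continuity;
\item $\widehat{\mu}_{q_k}(\xi+\lambda)\to\widehat{\mu}(\xi+\lambda)$, because the infinite product formula for $\widehat\mu$ converges pointwise (indeed $\mu_{q_k}\to\mu$ weakly).
\end{itemize}
Hence $f_k(\lambda)\to|\widehat{\mu}(\xi+\lambda)|^2$ for every $\lambda\in\Lambda$.

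\textbf{Part (i).} All $f_k\ge0$, and $\sum_\lambda f_k(\lambda)$ equals the constant right-hand side for every $k$. Fatou's lemma for counting measure on $\Lambda$ then gives
$$
\sum_{\lambda\in\Lambda}|\widehat\mu(\xi+\lambda)|^2\le\liminf_k\sum_\lambda f_k(\lambda)=\frac1n\sum_{s}|M_{\mathcal B_s}(\xi)|^2 .
$$
Equivalently, one can restrict first to a finite set $F\subset\Lambda$, pass to the limit, and then take the supremum over $F$.

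\textbf{Part (ii).} Now let $\xi\in[0,1]$ and assume the hypothesis with constant $c>0$. For $\lambda\in\Lambda_{q_k}$ the factorization $\widehat\mu=\widehat\mu_{q_k}\widehat\mu_{>q_k}$ and the hypothesis give
$$
f_k(\lambda)\le c^{-1}|\widehat{\mu}_{q_k}(\xi+\lambda)|^2|\widehat{\mu}_{>q_k}(\xi+\lambda)|^2=c^{-1}|\widehat\mu(\xi+\lambda)|^2 .
$$
For $\lambda\notin\Lambda_{q_k}$ the same bound holds trivially, since $f_k(\lambda)=0$. So every $f_k$ is dominated by $c^{-1}|\widehat\mu(\xi+\cdot)|^2$, which is summable over $\Lambda$ by part (i). Dominated convergence then lets us pass to the limit in the averaged identity, giving the claimed equality.

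\textbf{Expected obstacle.} The only delicate point is making sure the dominating function is independent of $k$. This is exactly what the hypothesis provides: it converts the $k$-dependent factor $\frac1n\sum_s|M_{N^{-q_k}\mathcal B_s}|^2$ into the tail $|\widehat\mu_{>q_k}|^2$, which recombines with $\widehat\mu_{q_k}$ into the full $\widehat\mu$. Everything else is the routine limit above.
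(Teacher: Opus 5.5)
Your proposal is correct and follows essentially the paper's route. The paper omits the proof and derives the proposition from Lemma~\ref{lem:2.1} together with \cite[Proposition~4.3]{AL01}, which is exactly this passage to the limit $k\to\infty$: Fatou for (i), and dominated convergence for (ii), with the hypothesis supplying the $k$-independent majorant $c^{-1}|\widehat\mu(\xi+\lambda)|^2$, your write-up just filling in the details left to that reference.
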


\begin{remark}
    In the following sections, we may assume that $r = 1$, which does not affect our results; see \cite{AL01} for the justification.
\end{remark}

\section{Two technical Lemmas}
In this section, we develop two technical ingredients that will play a crucial role in the construction of uncountably many spectra in the next section. Following the notation of \cite{AL01}, we introduce a convention that allows us to avoid the possible obstruction caused by the zero point. 

Let
$$
M_{\mathcal B_s}(\xi)
=
\frac{1}{\#\mathcal B_s}
P_{\mathcal B_s}(e(-\xi)),
\qquad
P_{\mathcal B_s}(z)
=
\sum_{b\in\mathcal B_s}z^b,
$$
where \(e(x)=e^{2\pi i x}\). Then
$M_{\mathcal B_s}(\theta)=0$ if and only if
$P_{\mathcal B_s}(e(-\theta))=0.$
We denote the common zero set on the unit circle by
$$
\mathcal Z=\left\{
e(-\theta)\in\mathbb T:
P_{\mathcal B_s}(e(-\theta))=0
\text{ for all }s=0,1,\ldots,n-1
\right\}.$$
For every $e(-\theta) \in \mathcal{Z}$, let $F_{\theta}(z)\in\mathbb Z[x]$ be the minimal polynomial of $e(-\theta)$. Since $P_{\mathcal B_s}\in\mathbb Z[x]$ and $e(-\theta)$ is a zero of $P_{\mathcal B_s}$, we have
$$
F_{\theta}(x)\mid P_{\mathcal B_s}(x),
\qquad s=0,1,\ldots,n-1.
$$
Define $k_{\theta,s}=\max\{k:F_{\theta}^k(z) \mid P_{\mathcal{B}_s}(z)\}$ and 
$$k_{\theta}=\min\{k_{\theta,s}:s=0,1,\ldots,n-1\}.$$
From the preceding construction, the polynomial
$$F(x)=\prod_{e(-\theta) \in \mathcal{Z}} F_{\theta}^{k_{\theta}}(x).$$
divides each $P_{\mathcal B_s}(x)$ for $s=0,1,\ldots,n-1$. 
Therefore, for each \(s\), we may write
$$P_{\mathcal B_s}(x)=F(x)Q_s(x),$$
where $Q_s(x)\in\mathbb Z[x]$. Moreover, the polynomials \(Q_s\) satisfy
$$
Q(\xi)
:=
\frac1n
\sum_{s=0}^{n-1}
\left|Q_s(e(-\xi))\right|^2
>0,
\qquad \xi\in\mathbb R.
$$

Consequently,
$$
M_{\mathcal{B}_s}(\xi)=\frac{1}{\#\mathcal{B}_s} F(e(-\xi))Q_s(e(-\xi)),
\qquad s=0,1,\ldots,n-1.
$$
Since
$$
\mathcal D
=
\bigcup_{s=0}^{n-1}(a_s+N\mathcal B_s),
$$
and the sets \(\mathcal B_s\) have the same cardinality, we obtain
$$\widehat{\delta}_{N^{-1}\mathcal{D}}(\xi)=\frac{1}{n}\sum_{s=0}^{n-1} e^{-2\pi i \frac{a_s}{N}\xi} M_{\mathcal{B}_s}(\xi).$$

\begin{lemma}\label{lem:3.1}
For any $t \in \mathcal{T}_*$, we have
\begin{equation}\label{eq:3.1}
    \sum_{l=0}^{N-1}\lvert\frac{1}{n}\sum_{s=0}^{n-1} e^{-2\pi i \frac{a_s}{N}(x+tl)} Q_s(e(-x)) \rvert^2=\frac{N}{n} \frac{1}{n}  \sum_{s=0}^{n-1}\lvert  Q_s(e(-x))  \rvert^2 
\end{equation}
and 
\begin{equation}\label{eq:3.2}
\sum_{l=0}^{N-1}\lvert\frac{1}{n}\sum_{s=0}^{n-1} e^{-2\pi i \frac{a_s}{N}(x+tl)} Q_s(e(-x)) \rvert^2\frac{1}{n} \sum_{r=0}^{n-1} \lvert M_{\frac{\mathcal{B}_r}{N}} (x+tl)\rvert^2 =\frac{N}{n\#\mathcal{B}_s} \frac{1}{n}\sum_{s=0}^{n-1}\lvert  Q_s(e(-x))  \rvert^2 .
\end{equation}

\end{lemma}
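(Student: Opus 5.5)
The identity \eqref{eq:3.1} is a Parseval/orthogonality identity for the first-level digit set $\mathcal A$, dressed up with the polynomial weights $Q_s$. I will expand the square and let the Hadamard property of $(N,\mathcal A,t\mathcal L_1)$ kill the off-diagonal terms. Write $c_s(x)=e^{-2\pi i a_s x/N}Q_s(e(-x))$, so the summand is $\bigl|\frac1n\sum_s c_s(x)e^{-2\pi i a_s t l/N}\bigr|^2$. Note that $Q_s(e(-x))$ does not depend on $l$, because $e(-(x+tl))=e(-x)$ for integer $tl$. Expanding gives
\[
\frac1{n^2}\sum_{s,s'}c_s(x)\overline{c_{s'}(x)}\sum_{l=0}^{N-1}e^{-2\pi i (a_s-a_{s'})tl/N}.
\]
The inner sum equals $N$ if $N\mid t(a_s-a_{s'})$ and $0$ otherwise. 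So the goal is to show $t(a_s-a_{s'})\not\equiv 0 \pmod N$ for $s\ne s'$. Then only the diagonal survives, giving $\frac{N}{n^2}\sum_s|Q_s(e(-x))|^2$, which is \eqref{eq:3.1}.

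\textbf{Main obstacle.} The one step requiring care is exactly this incongruence, since the definition of $\mathcal T_*$ only says $(N,\mathcal A,t\mathcal L_1)$ is a Hadamard triple. I will get it from the column orthogonality of the unitary matrix $\frac1{\sqrt n}(e^{-2\pi i\, t\ell a/N})_{\ell\in\mathcal L_1,a\in\mathcal A}$. Unitarity of a square matrix gives orthogonality of columns as well as rows, so $\sum_{\ell\in\mathcal L_1}e^{-2\pi i t\ell(a_s-a_{s'})/N}=0$ for $s\ne s'$. If $N\mid t(a_s-a_{s'})$, every term would equal $1$, a contradiction. Hence the $a_s$ are distinct and $t(a_s-a_{s'})\not\equiv0 \pmod N$, as needed. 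If the paper's normalization is such that $l$ effectively ranges over residues with $e(-tl a_s/N)$ periodic mod $N$, the same computation applies verbatim.

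For \eqref{eq:3.2} I will show that the extra factor $\frac1n\sum_r|M_{\mathcal B_r/N}(x+tl)|^2$ is independent of $l$, and then apply \eqref{eq:3.1}. Since $\mathcal B_r\subset\mathbb Z$, we have $M_{\mathcal B_r/N}(x+tl)=M_{\mathcal B_r}((x+tl)/N)$, which genuinely depends on $l$. So the naive route fails, and I will instead interpret the factor through the factorization $P_{\mathcal B_r}=FQ_r$ as the authors intend: $|M_{\mathcal B_r}(y)|^2=\frac{1}{(\#\mathcal B_r)^2}|F(e(-y))|^2|Q_r(e(-y))|^2$. The plan is to combine the sum over $l$ with the Hadamard triple $(N,\mathcal A\oplus\mathcal B_s,t\mathcal L_1\oplus t\mathcal L_2)$, which is the only remaining hypothesis. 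That triple gives the orthogonality needed when the $\mathcal B$-phases are mixed with the $\mathcal A$-phases, and it should collapse the double sum to a diagonal term carrying the constant $\frac{1}{\#\mathcal B_s}$. This mirrors the proof of \cite[Lemma~4.2]{AL01}: expand, use orthogonality of $\{e^{-2\pi i t(\ell_1+\ell_2)d/N}\}$ over $d\in a_s+\mathcal B_s$, and obtain the factor $N/(n\#\mathcal B_s)$.

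I expect this second identity to be the delicate part: the exact bookkeeping of which variable carries the $\mathcal B$-dependence, and checking that the cross terms vanish by the combined Hadamard condition.
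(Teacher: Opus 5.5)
Your argument for \eqref{eq:3.1} is correct and matches the paper's: expand the square, evaluate the complete sum over $l$, and keep the diagonal. Your column-orthogonality derivation of $N\nmid t(a_s-a_{s'})$ also supplies a step the paper leaves implicit.

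For \eqref{eq:3.2}, however, there is a genuine gap. You never produce the cancellation; you only assert that the combined Hadamard condition ``should collapse the double sum'', and both tools you name point the wrong way. First, the factorization $P_{\mathcal B_r}=FQ_r$ plays no role in this identity. The $Q_s(e(-x))$ enter only as fixed coefficients, and rewriting $|M_{\mathcal B_r/N}(x+tl)|^2$ through $F$ and $Q_r$ at $(x+tl)/N$ just returns you to $|P_{\mathcal B_r}|^2$. Second, the orthogonality you cite, over digits $d\in a_s+\mathcal B_s$ against frequencies in $t\mathcal L$, is not the sum that occurs here. The only sum that cancels is the complete residue sum over $l\in\{0,\dots,N-1\}$, exactly as in your proof of \eqref{eq:3.1}. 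Note also that the index $r$ from the $M$-factor is independent of the indices $s,s'$ produced by the square. So the relevant digit set is $\mathcal A\oplus\mathcal B_r$ for each fixed $r$, not a coset $a_s+\mathcal B_s$.

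The missing step is short.
\begin{itemize}
\item Write $|M_{\mathcal B_r/N}(x+tl)|^2=\frac{1}{(\#\mathcal B_r)^2}\sum_{b,b'\in\mathcal B_r}e^{-2\pi i(b-b')(x+tl)/N}$, expand the square as in \eqref{eq:3.1}, and interchange the sums.
\item For fixed $r,s,s',b,b'$ the $l$-dependence is $e^{-2\pi i\,tl\,((a_s+b)-(a_{s'}+b'))/N}$. Its sum over $l$ is $N$ if $N$ divides $t\bigl((a_s+b)-(a_{s'}+b')\bigr)$ and $0$ otherwise.
\item Apply your own column-orthogonality argument to the unitary matrix of $(N,\mathcal A\oplus\mathcal B_r,t\mathcal L_1\oplus t\mathcal L_2)$. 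It gives $N\nmid t(d-d')$ for distinct $d,d'\in\mathcal A\oplus\mathcal B_r$, and directness of the sum then forces $s=s'$ and $b=b'$.
\end{itemize}
Hence
\[
\sum_{l=0}^{N-1}e^{-2\pi i(a_s-a_{s'})tl/N}\,\bigl|M_{\mathcal B_r/N}(x+tl)\bigr|^2=\delta_{ss'}\,\frac{N}{\#\mathcal B_r}.
\]
Now sum against $\frac{1}{n^3}e^{-2\pi i(a_s-a_{s'})x/N}Q_s(e(-x))\overline{Q_{s'}(e(-x))}$ over $r,s,s'$. Since $\#\mathcal B_r=\#\mathcal L_2$ is independent of $r$, this yields \eqref{eq:3.2}. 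This is the computation in the paper.
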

\begin{proof} Since $(N,\mathcal{A},t\mathcal{L}_1)$ is a Hadamard triple and
$$\sum_{l=0}^{N-1} e^{2\pi i \frac{a_{s'}-a_s}{N} tl}=\begin{cases} 
  N, & \text{if } N\mid t(a_{s'}-a_s),\\
  0, & \text{if } N\nmid t(a_{s'}-a_s).
\end{cases}$$
Then we have
    \begin{align*}
        & \sum_{l=0}^{N-1}\lvert\frac{1}{n}\sum_{s=0}^{n-1} e^{-2\pi i \frac{a_s}{N}(x+tl)} Q_{s}(e(-x)) \rvert^2 \\
         =& \frac{1}{n^2}\sum_{s=0}^{n-1}\sum_{s'=0}^{n-1}e^{2\pi i \frac{a_s-a_{s'}}{N}x}Q_s(e(-x))\overline{Q_{s'}(e(-x))}\sum_{l=0}^{N-1}  e^{2\pi i \frac{a_s-a_{s'}}{N}tl} \\
         = &\frac{N}{n} \frac{1}{n} \sum_{s=0}^{n-1} \lvert Q_s(e(-x))\rvert^2.
    \end{align*}
Since $(N,\mathcal{A}\oplus \mathcal{B}_s,t\mathcal{L})$ is a Hadamard triple and
$$\sum_{l=0}^{N-1} e^{2\pi i \frac{a_{s'}-a_s+b-b'}{N}tl}=\begin{cases} 
  N, & \text{if } N\mid t(a_{s'}-a_s+b-b'),\\
  0, & \text{if } N\nmid t(a_{s'}-a_s+b-b').
\end{cases}$$
Then we have 
\begin{align*}
  &\sum_{l=0}^{N-1} e^{2\pi i \frac{a_s-a_{s'}}{N}tl}M_{\frac{\mathcal{B}_r}{N}} (x+tl)\overline{M_{\frac{\mathcal{B}_r}{N}} (x+tl)} \\
  =& \frac{1}{\# \mathcal{B}_r^2}\sum_{b\in \mathcal{B}_r}\sum_{b'\in \mathcal{B}_r}e^{2\pi i \frac{b-b'}{N}x}\sum_{l=0}^{N-1} e^{2\pi i \frac{a_{s'}-a_s+b-b'}{N}tl}=\frac{N}{\# \mathcal{B}_r}.
\end{align*}
Hence 
 \begin{align*}
&\sum_{l=0}^{N-1}\lvert\frac{1}{n}\sum_{s=0}^{n-1} e^{-2\pi i \frac{a_s}{N}(x+tl)} Q_s(e(-x)) \rvert^2\frac{1}{n} \sum_{r=0}^{n-1} \lvert M_{\frac{\mathcal{B}_r}{N}} (x+tl)\rvert^2 \\
=&\frac{1}{n} \sum_{r=0}^{n-1}\sum_{l=0}^{N-1} \lvert \frac{1}{n}\sum_{s=0}^{n-1} e^{-2\pi i \frac{a_s}{N}(x+tl)} Q_s(e(-x)) \rvert^2 \lvert M_{\frac{\mathcal{B}_r}{N}} (x+tl)\rvert^2\\
=&\frac{1}{n^3} \sum_{r=0}^{n-1} \sum_{s=0}^{n-1}\sum_{s'=0}^{n-1}e^{2\pi i \frac{a_s-a_{s'}}{N}x}Q_s(e(-x))\overline{Q_{s'}(e(-x))}\sum_{l=0}^{N-1} e^{2\pi i \frac{a_s-a_{s'}}{N}tl}M_{\frac{\mathcal{B}_r}{N}} (x+tl)\overline{M_{\frac{\mathcal{B}_r}{N}} (x+tl)}\\
=& \frac{N}{n\#\mathcal{B}_s} \frac{1}{n} \sum_{s=0}^{n-1} \lvert M_{\mathcal{B}_s} (x)\rvert^2.
 \end{align*}
This completes the proof of the theorem.

\end{proof}

\begin{corollary}\label{cor:3.2}
For any $t \in \mathcal{T}_*$, we have
\begin{equation}\label{eq:3.3}
    \sum_{l=0}^{N-1} \lvert \widehat{\delta}_{N^{-1}\mathcal{D}}(x+tl) \rvert^2=\frac{N}{n} \frac{1}{n} \sum_{s=0}^{n-1} \lvert M_{\mathcal{B}_s} (x)\rvert^2
\end{equation}
and
\begin{equation}\label{eq:3.4}
    \sum_{l=0}^{N-1} \lvert \widehat{\delta}_{N^{-1}\mathcal{D}}(x+tl) \rvert^2\cdot\frac{1}{n} \sum_{s=0}^{n-1} \lvert M_{\frac{\mathcal{B}_s}{N}} (x+tl)\rvert^2=\frac{N}{n\#\mathcal{B}_s} \frac{1}{n} \sum_{s=0}^{n-1} \lvert M_{\mathcal{B}_s} (x)\rvert^2.
\end{equation}
\end{corollary}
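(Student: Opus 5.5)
The plan is to deduce Corollary~\ref{cor:3.2} from Lemma~\ref{lem:3.1} via the factorization $P_{\mathcal B_s}=FQ_s$. The key observation is that the common factor $F(e(-\cdot))$ depends only on $x$ modulo integers, so it is unchanged by the shifts $x\mapsto x+tl$ (here $t,l\in\mathbb Z$).

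\emph{First step.} Using the formula
\[
\widehat{\delta}_{N^{-1}\mathcal D}(\xi)=\frac1n\sum_{s=0}^{n-1}e^{-2\pi i\frac{a_s}{N}\xi}M_{\mathcal B_s}(\xi)
\]
together with $M_{\mathcal B_s}(\xi)=\frac{1}{\#\mathcal B_s}F(e(-\xi))Q_s(e(-\xi))$ and the fact that all $\mathcal B_s$ share a common cardinality $m':=\#\mathcal B_s$, I will write
\[
\widehat{\delta}_{N^{-1}\mathcal D}(x+tl)=\frac{F(e(-x))}{m'}\cdot\frac1n\sum_{s=0}^{n-1}e^{-2\pi i\frac{a_s}{N}(x+tl)}Q_s(e(-x)).
\]
Here I use $e(-(x+tl))=e(-x)$ because $tl\in\mathbb Z$. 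Note that the phase factor $e^{-2\pi i a_s tl/N}$ is \emph{not} periodic, so it stays inside the sum, exactly as in Lemma~\ref{lem:3.1}.

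\emph{Proof of \eqref{eq:3.3}.} Take squared moduli, sum over $l=0,\dots,N-1$, and pull out the common factor $|F(e(-x))|^2/m'^2$. Applying \eqref{eq:3.1} gives
\[
\frac{|F(e(-x))|^2}{m'^2}\cdot\frac Nn\cdot\frac1n\sum_{s}|Q_s(e(-x))|^2.
\]
Recombining via $|F(e(-x))|^2|Q_s(e(-x))|^2/m'^2=|M_{\mathcal B_s}(x)|^2$ yields \eqref{eq:3.3}.

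\emph{Proof of \eqref{eq:3.4}.} The same manipulation applies, with the extra factor $\frac1n\sum_r|M_{\mathcal B_r/N}(x+tl)|^2$ kept inside the $l$-sum. Applying \eqref{eq:3.2} gives $\frac{|F|^2}{m'^2}$ times the right-hand side of \eqref{eq:3.2}. Converting $|F|^2|Q_s|^2/m'^2$ back to $|M_{\mathcal B_s}(x)|^2$ then produces the stated constant $\frac{N}{n\#\mathcal B_s}$.

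\emph{Main obstacle.} Everything except the cross-term computation is bookkeeping. The real content is inside Lemma~\ref{lem:3.1}, namely the character-sum identity
\[
\sum_{l=0}^{N-1}e^{2\pi i\frac{a_s-a_{s'}}{N}tl}\,\bigl|M_{\mathcal B_r/N}(x+tl)\bigr|^2=\frac{N}{\#\mathcal B_r}\,\delta_{ss'},
\]
which requires the Hadamard property of $(N,\mathcal A\oplus\mathcal B_r,t\mathcal L)$. I must check that its hypotheses apply to every $r$; this holds because $t\in\mathcal T_*$ is assumed for all $s$ simultaneously.

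One further point needs care. The dilation in $M_{\mathcal B_r/N}(x+tl)$ is by $1/N$, so this factor is not $1$-periodic in $l$. It therefore cannot be pulled out of the $l$-sum, and I will not attempt to factor out $F$ from it. Only the $\widehat\delta_{N^{-1}\mathcal D}$ factor is reduced via $F$, and this is consistent with how \eqref{eq:3.2} is stated.
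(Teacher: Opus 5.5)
Your proposal is correct and is essentially the paper's proof: the paper also obtains \eqref{eq:3.3} and \eqref{eq:3.4} by multiplying \eqref{eq:3.1} and \eqref{eq:3.2} through by $\lvert F(e(-x))/\#\mathcal B_s\rvert^2$. The paper leaves implicit the step you make explicit, namely that $M_{\mathcal B_s}(x+tl)=M_{\mathcal B_s}(x)$ because $t\in\mathbb Z$, which is what lets the common factor $F(e(-x))$ come out of every term of the $l$-sum.
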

\begin{proof} 
    Multiplying both sides of \eqref{eq:3.1} and \eqref{eq:3.2} in Lemma \ref{lem:3.1} by $\lvert \frac{1}{\#\mathcal{B}_s} F(e(-x)) \rvert^2$ yields the desired conclusion.
\end{proof}

\begin{lemma}\label{lem:3.3}
   For any $t \in \mathcal{T_*}$, there exists a $d_t>0$, for all $x\in[0,t]$, there exist two distinct integers 
    $$l_1,l_2 \in \{0,1,\ldots,N-1\}$$ such that
$$\lvert\frac{1}{n}\sum_{s=0}^{n-1} e^{-2\pi i \frac{a_s}{N}(x+tl_i)} Q_s(e(-x)) \rvert^2\frac{1}{n} \sum_{s=0}^{n-1} \lvert M_{\frac{\mathcal{B}_s}{N}} (x+tl_i)\rvert^2 \geq \frac{d_t}{n} \sum_{s=0}^{n-1}\lvert  Q_s(e(-x))  \rvert^2 $$
for $i=1,2$.
\end{lemma}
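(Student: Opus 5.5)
The plan is to prove the statement with an explicit constant, using only \eqref{eq:3.2} of Lemma~\ref{lem:3.1}, a Cauchy--Schwarz bound on each individual term, and the hypothesis $N>\#\mathcal D$ of Theorem~\ref{thm:1.7}. For $t\in\mathcal T_*$, $x\in\mathbb R$ and $l\in\{0,1,\ldots,N-1\}$ I will write
$$
g_l(x)=\Bigl|\frac1n\sum_{s=0}^{n-1}e^{-2\pi i\frac{a_s}{N}(x+tl)}Q_s(e(-x))\Bigr|^2\cdot\frac1n\sum_{s=0}^{n-1}\Bigl|M_{\frac{\mathcal B_s}{N}}(x+tl)\Bigr|^2 .
$$
I will also write $Q(x)=\frac1n\sum_{s}|Q_s(e(-x))|^2$, which is strictly positive, and $\#\mathcal B:=\#\mathcal B_s$, which does not depend on $s$. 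In this notation the claimed inequality reads $g_{l_i}(x)\ge d_t\,Q(x)$.

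\textbf{Step 1 (the total).} By \eqref{eq:3.2},
$$
\sum_{l=0}^{N-1}g_l(x)=C\,Q(x),\qquad C:=\frac{N}{n\#\mathcal B}=\frac{N}{\#\mathcal D}.
$$
Here I use $\#\mathcal D=n\#\mathcal B$, since the union $\mathcal D=\bigcup_s(a_s+N\mathcal B_s)$ is disjoint (the triple $(N,\mathcal D,\mathcal L)$ is Hadamard, so the digits are distinct). The hypothesis $N>\#\mathcal D$ of Theorem~\ref{thm:1.7} gives $C>1$.

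\textbf{Step 2 (a uniform bound on each term).} Each $|M_{\mathcal B_s/N}|\le 1$, so the second factor of $g_l$ is at most $1$. For the first factor, the triangle inequality and Cauchy--Schwarz give
$$
\Bigl|\frac1n\sum_s e^{-2\pi i\frac{a_s}{N}(x+tl)}Q_s(e(-x))\Bigr|^2\le\Bigl(\frac1n\sum_s|Q_s(e(-x))|\Bigr)^2\le Q(x).
$$
Hence $g_l(x)\le Q(x)$ for every $l$ and every $x$.

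\textbf{Step 3 (two good indices).} Fix $x$ and let $l_1$ be an index maximizing $g_l(x)$. By Step 2, $g_{l_1}(x)\le Q(x)$, so the remaining $N-1$ terms satisfy
$$
\sum_{l\ne l_1}g_l(x)\ge (C-1)Q(x).
$$
Choose $l_2\ne l_1$ maximizing $g_l(x)$ among the others; then $g_{l_2}(x)\ge \frac{C-1}{N-1}Q(x)$. Since $g_{l_1}(x)\ge g_{l_2}(x)$, both indices satisfy the desired inequality with
$$
d_t:=\frac{C-1}{N-1}=\frac{N-\#\mathcal D}{(N-1)\#\mathcal D}>0.
$$
This constant is independent of $x$, so the restriction $x\in[0,t]$ is not even needed, and in fact $d_t$ does not depend on $t$ either.

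\textbf{Main obstacle.} The only delicate point is Step 1. It relies on the exact identity \eqref{eq:3.2} with constant $N/(n\#\mathcal B_s)$, and on checking that this constant really exceeds $1$, which is precisely where $N>\#\mathcal D$ enters. If the identity in Lemma~\ref{lem:3.1} carried a different normalization, I would instead fall back on a compactness argument. That route uses continuity of $g_l/Q$ on $[0,t]$ together with positivity of $Q$, and reduces the claim to showing that at least two of the $g_l(x)$ are nonzero at each point. The latter again follows from Step 2 combined with the identity, as long as the total exceeds the single-term bound.
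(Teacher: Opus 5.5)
Your argument is correct. Its core is the same inequality the paper uses in Case~II of its proof: the total $\sum_l g_l(x)=\frac{N}{\#\mathcal D}Q(x)$ from \eqref{eq:3.2}, the single-term bound $g_l(x)\le Q(x)$ from Cauchy--Schwarz, and the hypothesis $N>\#\mathcal D$. The difference lies in how the argument is organized.
\begin{itemize}
\item The paper argues by contradiction. It splits into the case where all $N$ terms are small, handled by extracting a convergent subsequence in $[0,t]$ and using $Q>0$, and the case where exactly one term is large. From this it gets an absolute lower bound $d_t'$ on $[0,t]$ via $\min_{[0,t]}Q>0$, and then rescales by $\max_{[0,t]}Q$ to reach the relative form.
\item You apply pigeonhole directly to the second-largest term. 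This makes the first case and all compactness unnecessary. It gives the explicit constant $d_t=\frac{N-\#\mathcal D}{(N-1)\#\mathcal D}$, valid for every $x\in\mathbb R$ and independent of $t$, and it does not need $Q>0$ at all.
\end{itemize}

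One justification in Step~1 is wrong and needs replacing. In general $(N,\mathcal D,\mathcal L)$ is \emph{not} a Hadamard triple: whenever $\#\mathcal B_s\ge 2$, two digits $a_s+Nb$ and $a_s+Nb'$ are congruent mod $N$, which is exactly why the product-form notion is needed. In the paper's example, $0\equiv 8\pmod 8$. The equality $\#\mathcal D=n\#\mathcal B$ follows instead from $(N,\mathcal A,\mathcal L_1)$ being Hadamard. Column orthogonality forces the $a_s$ to be pairwise incongruent mod $N$, so $a_s+Nb=a_{s'}+Nb'$ implies $s=s'$ and then $b=b'$. With that fix, your proof is complete.
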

\begin{proof}
Assume for contradiction that there exists $t \in \mathcal{T_*}$ such that for every $d>0$, one can find $x_d \in [0,
t]$ for which at most one index $l \in \{0,1,\ldots,N-1\}$ satisfies
$$\lvert\frac{1}{n}\sum_{s=0}^{n-1} e^{-2\pi i \frac{a_s}{N}(x_d+tl)} Q_s(e(-x_d)) \rvert^2\frac{1}{n}\sum_{s=0}^{n-1} \lvert M_{\frac{\mathcal{B}_s}{N}} (x_d+tl)\rvert^2 > d .$$
We now distinguish two possible situations. For each $d>0$, there exists a $x_d \in [0,t]$ such that either 
$$\lvert\frac{1}{n}\sum_{s=0}^{n-1} e^{-2\pi i \frac{a_s}{N}(x_d+tl)} Q_s(e(-x_d)) \rvert^2 \frac{1}{n}\sum_{s=0}^{n-1} \lvert M_{\frac{\mathcal{B}_s}{N}} (x_d+tl)\rvert^2<d \text{ for all }l\in \{0,1,\ldots,N-1\}$$
or there is an index $l_d \in \{0,1,\ldots, N-1 \}$ for which
$$\lvert\frac{1}{n}\sum_{s=0}^{n-1} e^{-2\pi i \frac{a_s}{N}(x_d+tl_d)} Q_s(e(-x_d)) \rvert^2 \frac{1}{n}\sum_{s=0}^{n-1} \lvert M_{\frac{\mathcal{B}_s}{N}} (x_d+tl_d)\rvert^2>d,$$
while
$$\lvert\frac{1}{n}\sum_{s=0}^{n-1} e^{-2\pi i \frac{a_s}{N}(x_d+tl)} Q_s(e(-x_d)) \rvert^2 \frac{1}{n}\sum_{s=0}^{n-1} \lvert M_{\frac{\mathcal{B}_s}{N}} (x_d+tl)\rvert^2<d \text{ for all } l\in \{0,1,\ldots,N-1\} \backslash \{l_d\}.$$\\

\textbf{Case I.} There exists a monotonically decreasing subsequence $\{d_k\}$ tending to 0 such that, for each $k$,  one can find $x_k \in [0,t]$, 
$$\lvert\frac{1}{n}\sum_{s=0}^{n-1} e^{-2\pi i \frac{a_s}{N}(x_k+tl)} Q_s(e(-x_k)) \rvert^2 \frac{1}{n}\sum_{s=0}^{n-1} \lvert M_{\frac{\mathcal{B}_s}{N}} (x_d+tl)\rvert^2<d_k \text{ for any } l\in \{0,1,\ldots,N-1\}.$$
Since $\{x_k\} \subset [0,1]$, then there exists a subsequence $\{x_{n_k} \}\subset \{x_k\}$ such that $x_{n_k} \rightarrow x \in [0,t]$ and 
$$\lvert\frac{1}{n}\sum_{s=0}^{n-1} e^{-2\pi i \frac{a_s}{N}(x+tl)} Q_s(e(-x)) \rvert^2 \frac{1}{n}\sum_{s=0}^{n-1} \lvert M_{\frac{\mathcal{B}_s}{N}} (x+tl)\rvert^2=0 \text{ for any } l\in \{0,1,\ldots,N-1\}.$$
Combining this with \eqref{eq:3.2} yields that
\begin{align*}
\sum_{s=0}^{n-1}\lvert Q_s(e(-x))\rvert^2
=0,
\end{align*}
which leads to a contradiction.\\

\textbf{Case II.} There exists a $d_0>0$ such that for any $d<d_0$, there exists a $x_d \in [0,t]$ and $l_d \in \{0,1,\ldots,N-1\}$, we have 
$$\lvert\frac{1}{n}\sum_{s=0}^{n-1} e^{-2\pi i \frac{a_s}{N}(x_d+tl_d)} Q_s(e(-x_d)) \rvert^2 \frac{1}{n}\sum_{s=0}^{n-1} \lvert M_{\frac{\mathcal{B}_s}{N}} (x_d+tl_d)\rvert^2>d,$$ 
whereas, for every $l\in \{0,1,\ldots,N-1\} \backslash \{l_d\}$
$$\lvert\frac{1}{n}\sum_{s=0}^{n-1} e^{-2\pi i \frac{a_s}{N}(x_d+tl)} Q_s(e(-x_d)) \rvert^ 2\frac{1}{n}\sum_{s=0}^{n-1} \lvert M_{\frac{\mathcal{B}_s}{N}} (x_d+tl)\rvert^2<d.$$
On the one hand, for any $d<d_0$, 
\begin{align*}
&\sum_{l=0}^{N-1}\lvert\frac{1}{n}\sum_{s=0}^{n-1} e^{-2\pi i \frac{a_s}{N}(x_d+tl)} Q_s(e(-x_d)) \rvert^ 2\frac{1}{n}\sum_{s=0}^{n-1} \lvert M_{\frac{\mathcal{B}_s}{N}} (x_d+tl)\rvert^2\\
    = & \lvert\frac{1}{n}\sum_{s=0}^{n-1} e^{-2\pi i \frac{a_s}{N}(x_d+tl_d)} Q_s(e(-x_d)) \rvert^2 \frac{1}{n}\sum_{s=0}^{n-1} \lvert M_{\frac{\mathcal{B}_s}{N}} (x_d+tl)\rvert^2\\
    +&\sum_{l\neq l_d}\lvert\frac{1}{n}\sum_{s=0}^{n-1} e^{-2\pi i \frac{a_s}{N}(x_d+tl)} Q_s(e(-x_d)) \rvert^2\frac{1}{n}\sum_{s=0}^{n-1} \lvert M_{\frac{\mathcal{B}_s}{N}} (x_d+tl)\rvert^2\\
    \leq & \lvert\frac{1}{n}\sum_{s=0}^{n-1} e^{-2\pi i \frac{a_s}{N}(x_d+tl_d)} Q_s(e(-x_d)) \rvert^2\frac{1}{n}\sum_{s=0}^{n-1} \lvert M_{\frac{\mathcal{B}_s}{N}} (x_d+tl_d)\rvert^2+(N-1) d.
\end{align*}
By the Cauchy–Schwarz inequality, we obtain that
\begin{align*}
    & \lvert\frac{1}{n}\sum_{s=0}^{n-1} e^{-2\pi i \frac{a_s}{N}(x_d+tl_d)} Q_s(e(-x_d)) \rvert^2\frac{1}{n}\sum_{s=0}^{n-1} \lvert M_{\frac{\mathcal{B}_s}{N}} (x_d+tl_d)\rvert^2\\
    \leq &\lvert\frac{1}{n}\sum_{s=0}^{n-1} e^{-2\pi i \frac{a_s}{N}(x_d+tl_d)} Q_s(e(-x_d)) \rvert^2 \leq \frac{1}{n}\sum_{s=0}^{n-1} \lvert Q_s(e(-x_d))\rvert^2.
\end{align*}
On the other hand, by \eqref{eq:3.2} in Lemma \ref{lem:3.1}, we have 
$$\frac{N}{n\#\mathcal{B}_s}\frac{1}{n} \sum_{s=0}^{n-1} \lvert Q_s(e(-x)) \rvert^2 < \frac{1}{n}\sum_{s=0}^{n-1} \lvert Q_s(e(-x_d))\rvert^2+(N-1)d.$$
This implies that 
$$d>\frac{N-n\#\mathcal{B}_s}{n(N-1)}\frac{1}{n} \sum_{s=0}^{n-1} \lvert Q_s(e(-x)) \rvert^2. $$
Since $ \frac{1}{n}\sum_{s=0}^{n-1} \lvert Q_s(e(-x)) \rvert^2$ is continuous and positive everywhere on $[0,t]$, it follows that
$$d>\frac{N-n\#\mathcal{B}_s}{n(N-1)}\min\{\frac{1}{n} \sum_{s=0}^{n-1} \lvert Q_s(e(-x)) \rvert^2: x\in[0,t]\}>0.$$
This contradicts the fact that $d \rightarrow 0$. In summary, we obtain that for any $t \in \mathcal{T_*}$, there exists a $d_t'>0$, for all $x\in[0,t]$, there exist two distinct integers 
    $$l_1,l_2 \in \{0,1,\ldots,N-1\}$$ such that
$$\lvert\frac{1}{n}\sum_{s=0}^{n-1} e^{-2\pi i \frac{a_s}{N}(x+tl_i)} Q_s(e(-x)) \rvert^2\frac{1}{n} \sum_{s=0}^{n-1} \lvert M_{\frac{\mathcal{B}_s}{N}} (x+tl_i)\rvert^2 \geq d_t' $$
for $i=1,2$. Let $d_t= d_t'\cdot {\max\{\frac{1}{n} \sum_{s=0}^{n-1} \lvert Q_s(e(-x)) \rvert^2: x\in[0,t]\}}^{-1}$, then 
$$\lvert\frac{1}{n}\sum_{s=0}^{n-1} e^{-2\pi i \frac{a_s}{N}(x+tl_i)} Q_s(e(-x)) \rvert^2\frac{1}{n} \sum_{s=0}^{n-1} \lvert M_{\frac{\mathcal{B}_s}{N}} (x+tl_i)\rvert^2 \geq \frac{d_t}{n} \sum_{s=0}^{n-1}\lvert  Q_s(e(-x))  \rvert^2.$$
This completes the proof of the theorem.

\end{proof}



As a preparation, we note the following phenomenon. The advantage lies in the boundedness of the selected integers, which greatly simplifies our later estimates. Let 
\begin{equation}\label{eq:3.5}
    \mathcal{U}=:\{x\in [0,t]: \frac{1}{n} \sum_{s=0}^{n-1} \lvert M_{\mathcal{B}_s}(x)\rvert^2>0\}
\end{equation}
\begin{lemma}\label{lem:3.4}
Let  $\mathcal{U}$  be as in \eqref{eq:3.5}. For any $t\in\mathcal{T}_*$, there exists a integer $m_t>0$ such that
$$ W_{m_t}(\widehat\mu):=\{\xi\in \mathcal{U}:\widehat\mu(\xi+tk)=0\text{ for every integer } \lvert k \rvert \leq N^{m_t}\}=\varnothing.
$$
Equivalently, for every $\xi\in \mathcal{U}$, there exists a integer $\lvert k \rvert \leq N^{m_t}$ such that $\widehat\mu(\xi+tk)\neq 0$.
\end{lemma}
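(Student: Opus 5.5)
Assume, for contradiction, that for every $m$ the set $W_m(\widehat\mu)$ is nonempty. Since $W_{m+1}\subset W_m$, pick $\xi_m\in W_m(\widehat\mu)$, so $\widehat\mu(\xi_m+tk)=0$ for all $|k|\le N^m$. The sequence $\xi_m$ lies in $[0,t]$; pass to a convergent subsequence $\xi_m\to\xi^*\in[0,t]$.

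The first task is to show $\widehat\mu(\xi^*+tk)=0$ for every $k\in\mathbb Z$. The zero set $\mathcal Z(\widehat\mu)=\bigcup_j N^j\mathcal Z(M_{\mathcal D})$ is closed away from $0$ and discrete: $M_{\mathcal D}$ is a trigonometric polynomial, so its zeros are a finite union of arithmetic progressions modulo $1$. Hence in any bounded window, $\mathcal Z(\widehat\mu)$ is a discrete set (zeros of the entire function $\widehat\mu$). For fixed $k$, $\widehat\mu(\xi_m+tk)=0$ for all large $m$, and by continuity $\widehat\mu(\xi^*+tk)=0$. So $\{\xi^*+tk:k\in\mathbb Z\}\subset \mathcal Z(\widehat\mu)$. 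One must also make sure $\xi^*\in\mathcal U$ or handle the boundary: if $\xi^*\notin\mathcal U$, I would use the closed set $\overline{\mathcal U}$ together with the positivity of $Q$. Alternatively, I would define the contradiction through $F$ and $Q$ directly so that only $Q>0$ is needed.

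Next, derive a contradiction with spectrality-type identities. I would use Corollary~\ref{cor:3.2} iterated, or better Proposition~\ref{prop:2.2}(i)–(ii), applied to a spectrum built from $t\mathcal L$-type sets, all contained in $t\mathbb Z$. Concretely, with $t\in\mathcal T_*$, the triple with $t\mathcal L_1\oplus t\mathcal L_2$ is again product-form Hadamard. By An–Lai the associated $\Lambda_t\subset t\mathbb Z$ then satisfies
\[
\sum_{\lambda\in\Lambda_t}|\widehat\mu(\xi+\lambda)|^2=\frac1n\sum_{s}|M_{\mathcal B_s}(\xi)|^2
\]
for all $\xi$ (this is the spectrality identity, with the multiplier $\frac1n\sum|M_{\mathcal B_s}|^2$ as in Proposition~\ref{prop:2.2}). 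At $\xi=\xi^*$ the left side vanishes, since every $\xi^*+\lambda$ is a zero. The right side is positive because $\xi^*\in\mathcal U$. This is the contradiction.

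The main obstacle is twofold. First, one must justify the full equality in Proposition~\ref{prop:2.2}(ii) for the dilated triple, i.e. the lower bound condition on tails. I would cite An--Lai's spectrality theorem for the product-form triple $(N,\mathcal D,t\mathcal L)$ rather than reprove it. Second, one must handle $\xi^*$ possibly leaving $\mathcal U$. For the latter I would instead run the argument with the ratio $\sum_\lambda|\widehat\mu(\xi+\lambda)|^2/\frac1n\sum_s|M_{\mathcal B_s}(\xi)|^2\equiv1$. I would factor out $|F(e(-\xi))|^2$ as in Lemma~\ref{lem:3.1}, leaving a continuous identity with $Q(\xi)>0$ everywhere on $[0,t]$. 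Then the truncated sums $\sum_{|k|\le N^m}$ give uniform convergence on compacts, which yields a positive lower bound for some $|k|\le N^{m_t}$ uniformly in $\xi$, and hence the finite $m_t$ directly.
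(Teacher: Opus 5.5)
\textbf{The gap is the appeal to An--Lai in your second paragraph.} You need a set $\Lambda_t\subset t\Z$ with $\sum_{\lambda\in\Lambda_t}\lvert\widehat\mu(\xi+\lambda)\rvert^2=\frac1n\sum_s\lvert M_{\mathcal B_s}(\xi)\rvert^2$. An--Lai's theorem for the triple $(N,\mathcal D,t\mathcal L_1\oplus t\mathcal L_2)$ gives spectrality of $\mu$, not such a set. Their method produces sets of the form \eqref{eq:2.1} with blocks $\widetilde\Gamma_p\equiv\Gamma_p\pmod{N^p}$, i.e.\ points $\gamma+N^pk$ with $\gamma\in t\Z$ but $k$ an arbitrary integer, so nothing keeps $\Lambda_t$ inside $t\Z$.

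The uncorrected set built from $t\mathcal L$-digits alone genuinely fails. Take $\mu_{4,\{0,2\}}$ with $\mathcal L=\{0,1\}$ and $t=3$. Every $1+\lambda$ with $\lambda\in\{\sum_j 4^j l_j: l_j\in\{0,3\}\}$ is $4^j$ times an odd integer, hence a zero of $\widehat\mu$. So at $\xi=1\in\mathcal U$ the left side is $0$ while the right side is $1$. The standard spectrum for the digits $\{0,3\}$ repairs this by adding the cycle point $-1\notin 3\Z$.

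To get the equality for some subset of $t\Z$, you would have to verify the tail bound of Proposition~\ref{prop:2.2}(ii) with corrections taken in $tN^p\Z$. That means choosing, uniformly in $x$, integers $k$ with $\lvert\widehat\mu(x+tk)\rvert$ bounded below, which is exactly what Lemma~\ref{lem:3.4} supplies. In the paper it feeds Lemma~\ref{lem:3.5}, Theorem~\ref{thm:4.2} and Corollary~\ref{cor:4.3}, from which such spectra are built. So your argument is circular, and your closing Dini-type uniformity step inherits the same gap.

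\textbf{What the paper does instead needs no spectrality input.}

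\emph{Step 1.} Taking $k=0$ shows each $\xi_m$ is a zero of the entire function $\widehat\mu$ in $[0,t]$. There are only finitely many such zeros, so a single $\xi_0\in\mathcal U$ lies in every $W_m$, hence in $W=\{\xi\in\mathcal U:\widehat\mu(\xi+tk)=0\ \forall k\in\Z\}$. This also removes your worry about a limit point leaving $\mathcal U$, where the vanishing would be automatic.

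\emph{Step 2.} Since $\widehat\delta_{N^{-1}\mathcal D}$ is $N$-periodic and $t\in\Z$, one has $\widehat\mu(\xi+tl+Ntr)=\widehat\delta_{N^{-1}\mathcal D}(\xi+tl)\,\widehat\mu(\tfrac{\xi+tl}{N}+tr)$. Hence every $l$ whose term in \eqref{eq:3.4} is nonzero sends $\xi\in W$ to an offspring $\tfrac{\xi+tl}{N}\in W$.

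\emph{Step 3.} Write $g(\xi)=\frac1n\sum_s\lvert M_{\mathcal B_s}(\xi)\rvert^2$. By Cauchy--Schwarz each term of \eqref{eq:3.4} is at most $g(\xi)$, while the terms sum to $\frac{N}{\#\mathcal D}g(\xi)>g(\xi)$. So no point of $W$ can have exactly one offspring.

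\emph{Step 4.} Distinct digit strings give distinct iterates $N^{-j}\bigl(\xi_0+t\sum_{i=1}^{j}l_iN^{i-1}\bigr)$. Since $W$ is finite, some point of $W$ must therefore have only one offspring, which is the contradiction.

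This mechanism is the idea missing from your proposal: finiteness of $W$ combined with the identity \eqref{eq:3.4} and $N>\#\mathcal D$.
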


\begin{proof}
Assume for contradiction that there exists $t\in\mathcal{T}_*$, for any $m\in \mathbb{N}^+$ such that
$$
        W_{m}(\widehat\mu):=\{\xi\in \mathcal{U}:\widehat\mu(\xi+tk)=0\text{ for every integer } \lvert k \rvert \leq N^{m}\}\neq \varnothing.
$$
Hence there exists $\xi_m\in W_{m}(\widehat\mu) \subset W_{m-1}(\widehat\mu)$. Fixed $m_0 \in \mathbb{N}^+$, by the monotonicity of the set $W_{m}(\widehat\mu)$, for any $m\geq m_0$, we have
$$\widehat\mu(\xi_m+tk)=0\text{ for every integer } \lvert k \rvert \leq N^{m_0}.$$
This implies that 
$$ \widehat\mu(\xi_m)=0 \text{ for all }m\geq m_0$$
From the analyticity of $\widehat{\mu}$, which has at most finitely many zeros on $[0,t]$, i.e., $\xi_m=\xi_0$ for any $m \geq m_0$. Then 
$$\xi_0 \in \bigcap_{m \geq m_0} W_{m}(\widehat\mu)= W(\widehat\mu):=\{\xi\in \mathcal{U}:\widehat\mu(\xi+tk)=0\text{ for every }k\in \mathbb{Z}\}.$$
According to \eqref{eq:3.4}, there exists a $l_0 \in \{0,1,\ldots,N-1\}$ such that 
$$\lvert \widehat{\delta}_{N^{-1}\mathcal{D}}(\xi_0+tl_0) \rvert^2\cdot\frac{1}{n} \sum_{s=0}^{n-1} \lvert M_{\frac{\mathcal{B}_s}{N}} (\xi_0+tl_0)\rvert^2 \geq \frac{1}{n\#\mathcal{B}_s} \frac{1}{n} \sum_{s=0}^{n-1} \lvert M_{\mathcal{B}_s} (\xi_0)\rvert^2>0.$$
For such $l_0$ and any $r \in \mathbb{Z}$, we get the self-similar identity
$$ \widehat\mu(\xi_0+tl_0+Ntr)=\widehat\delta_{N^{-1}D}((\xi_0+tl_0)/N)\widehat\mu((\xi+tl_0)/N+tr). $$
This implies that 
$$\widehat\mu((\xi+tl_0)/N+tr)=0 \text{ for all }r \in \mathbb{Z}, $$
which forces $\xi_1=(\xi_0+tl_0)/N\in W(\widehat\mu)$. Iterating the above procedure, we get a sequence $\{\xi_n\}$ satisfying 
$$\xi_{n+1}=(\xi_n+tl_n)/N\in W(\widehat\mu),$$
where $l_n \in \{0,1,\ldots,N-1\}$. The analyticity of $\widehat\mu$ implies that it has only finitely many zeros on $[0,t]$. Therefore, there exists $n_0$ such that for all $n \geq n_0$, each $\xi_n$ has only one offspring $\xi_{n+1}=(\xi_n+tl_n)/N\in W(\widehat\mu)$, i.e., there is only one $l_n \in \{0,1,\ldots,N-1\}$ such that
$$\lvert \widehat{\delta}_{N^{-1}\mathcal{D}}(\xi_n+tl_n) \rvert^2\cdot\frac{1}{n} \sum_{s=0}^{n-1} \lvert M_{\frac{\mathcal{B}_s}{N}} (\xi_n+tl_n)\rvert^2 \neq 0.$$
Then we get 
\begin{align*}
     \frac{N}{n\#\mathcal{B}_s} \frac{1}{n} \sum_{s=0}^{n-1} \lvert M_{\mathcal{B}_s} (\xi_n)\rvert^2=& \lvert \widehat{\delta}_{N^{-1}\mathcal{D}}(\xi_n+tl_n) \rvert^2\cdot\frac{1}{n} \sum_{s=0}^{n-1} \lvert M_{\frac{\mathcal{B}_s}{N}} (\xi_n+tl_n)\rvert^2\\
     \leq & \lvert \frac{1}{n} \sum_{s=0}^{n-1} e^{2\pi i N^{-1}a_s(\xi_n+tl_n)} M_{\mathcal{B}_s}(\xi_n)\rvert^2\\
     \leq & \frac{1}{n} \sum_{s=0}^{n-1} \lvert M_{\mathcal{B}_s} (\xi_n)\rvert^2.
\end{align*}
This is contrary to $N>\#\mathcal{D}=n\#\mathcal{B}_s$.

\end{proof}

Applying the Riesz–Markov–Kakutani representation theorem, we obtain the following lemma, which is the last result of this section.

\begin{lemma}
\label{lem:3.5}
Assume $1<\# D<N$. For every $t\in\mathcal{T}_*$ and $\xi\in \mathcal{U}$, there exists $m_t \in \mathbb{N}^+$ such that the set
$$
  Z_{m_t}(\xi):=\{k\in\Z:\widehat\mu(\xi+tk)\ne0, \lvert k \rvert \leq N^{m_t}\}
$$
has at least two elements.
\end{lemma}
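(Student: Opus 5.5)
The plan is to upgrade Lemma~\ref{lem:3.4}, which gives one nonzero value of $\widehat\mu(\xi+tk)$ in the window $|k|\le N^{m_t}$ for each $\xi\in\mathcal U$, to two nonzero values. I will argue by contradiction, reusing the self-similar recursion from that proof together with Corollary~\ref{cor:3.2} and Lemma~\ref{lem:3.3}.

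First I enlarge the exponent. Let $m$ be the integer given by Lemma~\ref{lem:3.4}. I will take $m_t=m+2$, or larger if needed.

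Fix $\xi\in\mathcal U$. By Lemma~\ref{lem:3.3}, applied after multiplying through by $|F(e(-\xi))|^2/\#\mathcal B_s^2$ as in Corollary~\ref{cor:3.2}, there are two distinct $l_1,l_2\in\{0,\dots,N-1\}$ such that
$$|\widehat\delta_{N^{-1}\mathcal D}(\xi+tl_i)|^2\cdot\tfrac1n\textstyle\sum_s|M_{\mathcal B_s/N}(\xi+tl_i)|^2>0 .$$
Set $\eta_i=(\xi+tl_i)/N$. The nonvanishing of $\frac1n\sum_s|M_{\mathcal B_s}(\eta_i)|^2$ says that $\eta_i$ lies in $\mathcal U$, up to reducing modulo $t$. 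This reduction is harmless, since we only ever look at the points $\eta_i+tr$.

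Next, for $r\in\mathbb Z$ I write
$$\widehat\mu(\xi+tl_i+Ntr)=\widehat\delta_{N^{-1}\mathcal D}(\xi+tl_i+Ntr)\,\widehat\mu(\eta_i+tr),$$
and $\widehat\delta_{N^{-1}\mathcal D}(\xi+tl_i+Ntr)=\widehat\delta_{N^{-1}\mathcal D}(\xi+tl_i)$ because $\mathcal D\subset\mathbb Z$ and $t\in\mathbb Z$. The first factor is nonzero, so I apply Lemma~\ref{lem:3.4} to $\eta_i$. This gives some $r_i$ with $|r_i|\le N^{m}$ and $\widehat\mu(\eta_i+tr_i)\ne0$, hence $\widehat\mu(\xi+t(l_i+Nr_i))\ne0$.

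The indices $k_i=l_i+Nr_i$ are distinct, because $l_1\ne l_2$ and $0\le l_i<N$ make $k_1\not\equiv k_2\pmod N$. They also satisfy $|k_i|\le N-1+N^{m+1}\le N^{m+2}$. This yields two elements of $Z_{m_t}(\xi)$ with $m_t=m+2$, and $m_t$ is independent of $\xi$.

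The main obstacle is the reduction step, that is, making sure Lemma~\ref{lem:3.4} really applies at $\eta_i$. The window $[0,t]$ in the definition of $\mathcal U$ has to be handled by translating $\eta_i$ by a multiple of $t$; this shifts $r_i$ by at most one, which the slack in $m_t$ absorbs. I also need to check that the positivity delivered by Lemma~\ref{lem:3.3} transfers from the $Q_s$-form to the $M$-form. This holds because $\xi\in\mathcal U$ forces $F(e(-\xi))\neq0$.
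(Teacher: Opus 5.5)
Your argument is correct, and it takes a genuinely different route from the paper's.

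\textbf{The paper's route.} The paper argues by contradiction at a fixed $\xi_0$. Suppose $\widehat\mu(\xi_0+tk)\neq0$ for exactly one $k=k_0$; Lemma~\ref{lem:3.4} guarantees there is at least one. Push $e^{-2\pi i\xi_0x}\,d\mu(x)$ forward under $x\mapsto tx \bmod 1$. This gives a measure on $\mathbb T$ whose Fourier coefficients are exactly $\widehat\mu(\xi_0+tk)$. By uniqueness of Fourier coefficients it must equal $c\,e^{2\pi i k_0x}\,dx$ with $c\neq0$. That is impossible, because the measure is carried by $\pi(tK)$, which is Lebesgue-null since $\dim_H K=\log\#\mathcal D/\log N<1$.

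\textbf{Your route.} You use a self-similar branching argument instead. Lemma~\ref{lem:3.3}, where $N>\#\mathcal D$ enters through \eqref{eq:3.2} and Cauchy--Schwarz, gives two digits $l_1\neq l_2$. Each has a nonvanishing first-level factor and a child $\eta_i\in\mathcal U$. Lemma~\ref{lem:3.4} then gives one nonzero value over each child. The identity $\widehat\mu(\xi+tl_i+Ntr)=\widehat\delta_{N^{-1}\mathcal D}(\xi+tl_i)\,\widehat\mu(\eta_i+tr)$ lifts these to indices $k_i=l_i+Nr_i$. These are distinct because they differ mod $N$.

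\textbf{What each route buys.} Yours gives uniformity. Your $m_t=m+2$ does not depend on $\xi$, and the two indices lie in different residue classes mod $N$. This is the form the lemma is actually used in later: the compactness steps in Theorem~\ref{thm:4.2} and Corollary~\ref{cor:4.3} need a bound on $|k|$ uniform in the point. In fact Theorem~\ref{thm:4.2} essentially reruns your argument. The paper's contradiction at a fixed $\xi_0$ only yields an $m$ depending on $\xi$. On the other hand, the paper's route uses only the singularity of $\mu$ beyond Lemma~\ref{lem:3.4}, not the Hadamard-type identity. It also shows more: $\{k:\widehat\mu(\xi+tk)\neq0\}$ is infinite, since a nonzero measure on $\mathbb T$ with finitely many nonzero coefficients is absolutely continuous.

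\textbf{A simplification.} The ``reduction modulo $t$'' you flag as the main obstacle is vacuous. For $\xi\in[0,t]$ and $0\le l_i\le N-1$ you have $0\le \xi+tl_i\le tN$, so $\eta_i\in[0,t]$ already. Positivity of $\frac1n\sum_s|M_{\mathcal B_s}(\eta_i)|^2$ then places $\eta_i$ in $\mathcal U$ directly.
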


\begin{proof}
 Suppose, toward a contradiction, that there exist $t_0\in\mathcal{T}_*$ and $\xi_0\in \mathcal{U}$ such that for any $m \in \mathbb{N}^+$, 
$$\#Z_{m}(\xi_0)=\#\{k\in\Z:\widehat\mu(\xi_0+t_0k)\ne0, \lvert k \rvert \leq N^{m}\}\leq 1.$$
By Lemma \ref{lem:3.4}, the set $\#Z_{m_{t_0}} (\xi_0)\geq 1$. It follows from $Z_{m}(\xi_0) \subset Z_{m+1}(\xi_0)$ that 
$$ \#Z_{m}(\xi_0)= \#Z_{m_{t_0}} (\xi_0)=1 \text{ for all }m\geq m_{t_0},$$
and there exists a unique $k_0\in \mathbb{Z}$ such that
$$ k_0 \in \bigcup_{m \in \mathbb{N}^+} Z_{m}(\xi_0)=Z(\xi_0):=\{ k \in \mathbb{Z}: \widehat\mu(\xi+tk) \neq 0\}.$$
Let $K=K(N,D)$ be the attractor of the IFS 
$$\{f_d(x)=N^{-1}(x+d):d\in D\}.$$ 
As is well known, we have 
\begin{equation*}
        \dim_H tK=\dim_H K=\frac{\log \#\mathcal{D}}{\log N}<1.
\end{equation*}
Project $tK$ modulo 1 onto the circle $\mathbb{T}$:
$$
        \pi(tK)\subset\mathbb{T}=\R/\Z.
$$
We can claim that $\pi(tK)$ has Lebesgue measure zero. Decompose $tK$ according to the unit interval as follows：
$$tK=\bigcup_{n \in \mathbb{Z}} tK \cap [n,n+1)=: \bigcup_{n \in \mathbb{Z}} K_n$$
Hence $m(K_n)=0$ for all $n \in \mathbb{Z}$. Let $T_n$ be the translation map defined by $T_n(x):=x-n$. Then $T_n(K_n) \subset [0,1)$ and $m(T_n(K_n))=T_n(K_n)=0$. Consequently, 
$$m(\pi(tK)) \leq m(\bigcup_{n \in \mathbb{Z}} T_n(E_n))=0.$$
Define a finite complex Borel measure $\nu_{\xi_0}$ on $\mathbb{T}$ by
\begin{equation*}
    \int_{\mathbb{T}}f(x)\,d\nu_{\xi_0}(x)
        =\int_{\R} f(x\bmod 1)e^{-2\pi i\xi_0 t^{-1} x}\,d\mu(t^{-1}x),
        \qquad f\in C(\mathbb{T}).
\end{equation*}
Then $\nu_{\xi_0}$ is supported on $\pi(tK)$, and its Fourier coefficients are
\begin{equation*}
        \widehat\nu_{\xi_0}(k)
        =\int_{\mathbb{T}}e^{-2\pi ikx}\,d\nu_{\xi_0}(x)
        =\int_{\R}e^{-2\pi i({\xi_0}+tk)x}\,d\mu(x)
        =\widehat\mu({\xi_0}+tk).
\end{equation*}
By assumption, all Fourier coefficients of $\nu_{\xi_0}$ vanish except possibly the one at $k_0$. The uniqueness theorem for finite measures on the torus therefore gives
\begin{equation*}
        d\nu_{\xi_0}(x)=c e^{2\pi i k_0x}\,dx,
        \qquad c=\widehat\mu({\xi_0}+tk_0)\ne0.
\end{equation*}
The measure on the right is absolutely continuous and is not the zero measure; its support has full Lebesgue measure. This contradicts the fact that $\nu_{\xi_0}$ is supported on the null set $\pi(tK)$. Consequently, the theorem is established.
\end{proof}

\section{The proof of Theorem \ref{thm:1.7}}
This section is devoted to the proof of one of our main results, Theorem \ref{thm:1.7}. Before beginning the proof, we first establish several necessary tools based on the results obtained in the preceding section.

\begin{proposition}\label{prop:4.1}
    For any $t\in \mathcal{T}_*$, there exist $d_t>0$ and $\delta_t>0$, for all $x \in [0,t]$, there exist two distinct integers 
    $$l_1,l_2 \in \{0,1,\ldots,N-1\}$$ such that
$$\lvert\widehat{\delta}_{N^{-1}\mathcal{D}}(x+y+tl_i) \rvert^2\frac{1}{n} \sum_{s=0}^{n-1}\lvert M_{\frac{\mathcal{B}_s}{N}} (x+y+tl_i)\rvert^2 \geq d_t \cdot \frac{1}{n} \sum_{s=0}^{n-1}\lvert  M_{\mathcal{B}s}(x+y)  \rvert^2 $$
and
$$\lvert\widehat{\delta}_{N^{-1}\mathcal{D}}(x+y+tl_i) \rvert^2 \geq d_t \cdot \frac{1}{n} \sum_{s=0}^{n-1}\lvert  M_{\mathcal{B}s}(x+y)  \rvert^2 $$
for all $\lvert y \rvert< \delta_t$. 
\end{proposition}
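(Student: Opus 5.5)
The plan is to deduce Proposition~\ref{prop:4.1} from Lemma~\ref{lem:3.3} by factoring out the common polynomial $F$ and then using uniform continuity on a compact set. Recall that
$$\widehat{\delta}_{N^{-1}\mathcal D}(\xi)=\frac{1}{\#\mathcal B_s}F(e(-\xi))\cdot\frac1n\sum_{s=0}^{n-1}e^{-2\pi i\frac{a_s}{N}\xi}Q_s(e(-\xi)),$$
and that $\frac1n\sum_s|M_{\mathcal B_s}(\xi)|^2=\frac{1}{(\#\mathcal B_s)^2}|F(e(-\xi))|^2Q(\xi)$. Since $F(e(-\cdot))$ and the $Q_s(e(-\cdot))$ are $1$-periodic and $t\in\mathbb Z$, we have $F(e(-(\xi+tl)))=F(e(-\xi))$. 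Hence both desired inequalities, once the common factor $|F(e(-(x+y)))|^2/(\#\mathcal B_s)^2$ is divided out, reduce to inequalities between the functions
$$G_l(\xi):=\Big|\frac1n\sum_{s}e^{-2\pi i\frac{a_s}{N}(\xi+tl)}Q_s(e(-\xi))\Big|^2\cdot\frac1n\sum_s\big|M_{\mathcal B_s/N}(\xi+tl)\big|^2,\qquad H_l(\xi):=\Big|\frac1n\sum_{s}e^{-2\pi i\frac{a_s}{N}(\xi+tl)}Q_s(e(-\xi))\Big|^2$$
and $Q(\xi)$. Points where $F(e(-(x+y)))=0$ are trivial, since both sides vanish there.

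First I will apply Lemma~\ref{lem:3.3}. For each $x\in[0,t]$ it gives distinct $l_1,l_2$ with $G_{l_i}(x)\ge d'_t Q(x)$. Since $\frac1n\sum_s|M_{\mathcal B_s/N}|^2\le1$, it also gives $H_{l_i}(x)\ge G_{l_i}(x)\ge d'_tQ(x)$. So at $y=0$ both inequalities hold with constant $d'_t$ (up to the harmless factor $(\#\mathcal B_s)^{-2}$ bookkeeping, which cancels).

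Next I will extend this to the perturbed point $x+y$, keeping the same $l_1,l_2$ chosen for $x$. Write $q_0=\min_{\mathbb R}Q>0$, which is positive by periodicity and continuity. The finitely many functions $G_l,H_l$ ($0\le l\le N-1$) and $Q$ are continuous, hence uniformly continuous on the compact set $[-1,t+1]$. Choose $\delta_t\in(0,1)$ so that $|\xi-\xi'|<\delta_t$ implies
$$|G_l(\xi)-G_l(\xi')|,\ |H_l(\xi)-H_l(\xi')|,\ d'_t|Q(\xi)-Q(\xi')|<\tfrac{d'_t q_0}{4}.$$
Then for $|y|<\delta_t$,
$$G_{l_i}(x+y)\ge d'_tQ(x)-\tfrac{d'_tq_0}{4}\ge d'_tQ(x+y)-\tfrac{d'_tq_0}{2}\ge \tfrac{d'_t}{2}Q(x+y),$$
and the same chain holds for $H_{l_i}$. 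Taking $d_t=d'_t/2$ then gives both claimed inequalities.

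The main obstacle is this uniformity step. The constant must not depend on $x$, and the indices $l_i$, which come from Lemma~\ref{lem:3.3} pointwise and possibly discontinuously in $x$, must be fixed before perturbing in $y$. The argument above handles this by using only the uniform lower bound $q_0$ on $Q$ and equicontinuity of finitely many functions, so no continuity of the selection $x\mapsto l_i$ is needed. One should also check that the normalising factor $\frac{1}{\#\mathcal B_s}$ appearing in the statement is consistent, which it is because all the $\mathcal B_s$ have the same cardinality.
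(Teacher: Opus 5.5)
Your proposal is correct and follows the paper's route: factor out $|F(e(-(x+y)))|^2/(\#\mathcal B_s)^2$, invoke Lemma~\ref{lem:3.3}, pass from $x$ to $x+y$ by a uniform-continuity step on a compact set, and multiply the common factor back in. Your uniformity step is cleaner than the paper's. The paper works with a ratio $f$ whose indices $l_i$ depend on $x$, and takes the minimum of the radii from a finite subcover, which does not by itself give uniform control at every $x$. Your use of uniform continuity of the finitely many $G_l,H_l,Q$ together with the lower bound $q_0$ avoids both issues, and you also make the second inequality explicit via $H_l\ge G_l$.
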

\begin{proof}
According to Lemma \ref{lem:3.3}, for any $t\in \mathcal{T}_*$, there exist $d_t>0$ and $\delta_t>0$, for all $x \in [0,t]$, there exist two distinct integers 
    $$l_1,l_2 \in \{0,1,\ldots,N-1\}$$ such that
    $$f(x):=\frac{\lvert\frac{1}{n}\sum_{s=0}^{n-1} e^{-2\pi i \frac{a_s}{N}(x+tl_i)} Q_s(e(-x)) \rvert^2\frac{1}{n} \sum_{s=0}^{n-1}\lvert M_{\frac{\mathcal{B}_s}{N}} (x+tl_i)\rvert^2}{\frac{1}{n} \sum_{s=0}^{n-1}\lvert  Q_s(e(-x))  \rvert^2} \geq d_t.$$
By continuity of $f $, for any $x \in [0,t]$, there exists a $\delta_x>0$ such that 
$$ \lvert f(x+y)- f(x) \rvert \leq \frac{d_t}{2}, \text{ for any } \lvert y \rvert < \delta_{x}$$
Since $[0,t]$ is compact set and $[0,t] \subset \bigcup_{x\in [0,t]} B(x,\delta_x)$, then
$$[0,t] \subset \bigcup_{i=1}^m B(x_i,\delta_{x_i}),\qquad x_i \in [0,t].$$
Hence there exists $\delta_t:=\min\{\delta_{x_i}: i=1,2,\ldots,m\}$, for any $x \in [0,t]$, 
$$\lvert f(x+y) \rvert \geq \lvert f(x) \rvert-\lvert f(x+y)- f(x) \rvert \geq \frac{d_t}{2}$$
for any $\lvert y \rvert < \delta_t$. Consequently, 
$$\lvert\frac{1}{n}\sum_{s=0}^{n-1} e^{-2\pi i \frac{a_s}{N}(x+y+tl_i)} Q_s(e(-x-y)) \rvert^2\frac{1}{n} \sum_{s=0}^{n-1}\lvert M_{\frac{\mathcal{B}_s}{N}} (x+y+tl_i)\rvert^2 \geq \frac{d_t}{2} \cdot\frac{1}{n} \sum_{s=0}^{n-1}\lvert  Q_s(e(-x-y))  \rvert^2 .$$
Multiplying both sides of the above inequality by $\lvert \frac{1}{\#\mathcal{B}_s} F(e(-x-y)) \rvert^2$, we obtain
$$\lvert\widehat{\delta}_{N^{-1}\mathcal{D}}(x+y+tl_i) \rvert^2 \frac{1}{n} \sum_{s=0}^{n-1}\lvert M_{\frac{\mathcal{B}_s}{N}} (x+y+tl_i)\rvert^2\geq \frac{d_t}{2} \cdot \frac{1}{n} \sum_{s=0}^{n-1}\lvert  M_{\mathcal{B}s}(x+y)  \rvert^2. $$

\end{proof}

\begin{theorem}\label{thm:4.2}
    For every $t\in \mathcal{T}_*$, there exist $\varepsilon_t>0$,  $\delta_t>0$ and $m_t \geq 1$ such that for all $x \in [0,t]$, one can find two distinct integers 
    $$k_1,k_2 \in \mathbb{Z} \cap [-N^{m_t}, N^{m_t}]$$ 
    for which 
    $$\lvert \widehat{\mu}(x+y+tk_i) \rvert^2 \geq \varepsilon_t \cdot \frac{1}{n} \sum_{s=0}^{n-1}\lvert  M_{\mathcal{B}s}(x+y)  \rvert^2, \qquad \lvert y \rvert< \delta_t$$
    for $i=1,2$. In particular, when 
$x=0$, we may take $k=0$.
\end{theorem}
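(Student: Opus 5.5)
We plan to combine the two-offspring bound of Proposition~\ref{prop:4.1} with the non-vanishing result of Lemma~\ref{lem:3.4} and Lemma~\ref{lem:3.5}. Two facts drive the argument: the self-similar factorization
\[
\widehat{\mu}(\xi)=\widehat{\delta}_{N^{-1}\mathcal D}(\xi)\,\widehat{\mu}(\xi/N),
\]
and the observation that the two offspring indices in Proposition~\ref{prop:4.1} carry a factor $\frac1n\sum_s|M_{\mathcal B_s/N}(\cdot)|^2$ bounded below. That factor lets us propagate the lower bound one level down.

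\textbf{Step 1 (bounded search via compactness).} Fix $t\in\mathcal T_*$. For $x\in\mathcal U$, Lemma~\ref{lem:3.5} yields two distinct integers $k_1,k_2$ with $|k_i|\le N^{m_t}$ and $\widehat\mu(x+tk_i)\neq 0$.
\begin{itemize}
\item We consider the continuous function
\[
g(x)=\frac{\max^{(2)}_{|k|\le N^{m_t}}|\widehat\mu(x+tk)|^2}{\frac1n\sum_s|M_{\mathcal B_s}(x)|^2},
\]
where $\max^{(2)}$ denotes the second-largest value.
\item The numerator is continuous. The denominator vanishes only at finitely many points of $[0,t]$ (zeros of trigonometric polynomials).
\item Away from those points, $g$ is positive and continuous by Lemma~\ref{lem:3.5}. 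Hence it suffices to control $g$ near the zeros of $\frac1n\sum_s|M_{\mathcal B_s}|^2$, i.e.\ near the points of $\mathcal Z$.
\item Near such a zero $x_*$, write $M_{\mathcal B_s}=\frac{1}{\#\mathcal B_s}F\,Q_s$, exactly as in Proposition~\ref{prop:4.1}. Since $\widehat\mu(x+tk)=\widehat{\delta}_{N^{-1}\mathcal D}(x+tk)\widehat\mu((x+tk)/N)$ and $\widehat\delta_{N^{-1}\mathcal D}$ factors through $F(e(-x))$ (using $t\in\mathbb Z$, so $e(-(x+tk))=e(-x)$), the common factor $|F(e(-x))|^2$ cancels from numerator and denominator.
\item This reduces matters to a quotient with denominator $\frac1n\sum_s|Q_s(e(-x))|^2>0$ everywhere.
\end{itemize}

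\textbf{Step 2 (one-level reduction).} Concretely, for each $x\in[0,t]$ and $|y|<\delta_t$, take the two indices $l_1,l_2\in\{0,\dots,N-1\}$ from Proposition~\ref{prop:4.1}. Then
\[
|\widehat\mu(x+y+tl_i+Ntr)|^2=|\widehat\delta_{N^{-1}\mathcal D}(x+y+tl_i)|^2\,|\widehat\mu(\xi_i+tr)|^2,
\qquad \xi_i=\frac{x+y+tl_i}{N}.
\]
\begin{itemize}
\item The first factor is at least $d_t\frac1n\sum_s|M_{\mathcal B_s}(x+y)|^2$.
\item It remains to find $r$ with $|r|\le N^{m_t}$ such that $|\widehat\mu(\xi_i+tr)|^2\ge c_t>0$ uniformly.
\item By the first inequality of Proposition~\ref{prop:4.1}, $\frac1n\sum_s|M_{\mathcal B_s}(\xi_i)|^2$ is bounded below by $d_t$ (divide by $|\widehat\delta|^2\le\frac1n\sum|M_{\mathcal B_s}(x+y)|^2$ via Cauchy--Schwarz). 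So $\xi_i$ stays in a compact subset of $\mathcal U$ (mod $t$) on which $\sum_s|M_{\mathcal B_s}|^2\ge d_t$.
\item On this compact set, Lemma~\ref{lem:3.4} gives some $|r|\le N^{m_t}$ with $\widehat\mu(\xi_i+tr)\ne0$. By continuity and compactness, $\max_{|r|\le N^{m_t}}|\widehat\mu(\xi_i+tr)|^2\ge c_t>0$ uniformly.
\item The resulting indices are $k_i=l_i+Nr_i$. They are distinct because $l_1\ne l_2$ and $0\le l_i<N$, so the $k_i$ lie in different residue classes mod $N$.
\item The bound $|k_i|\le N^{m_t+1}+N$ means we simply enlarge $m_t$. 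Setting $\varepsilon_t=d_tc_t$ gives the claim. Shrinking $\delta_t$ if necessary, the case $y\neq0$ is covered by uniform continuity.
\end{itemize}

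\textbf{Step 3 (the case $x=0$).} We have $\widehat\mu(0)=1$, so $|\widehat\mu(y)|^2\ge\frac12\ge\frac12\cdot\frac1n\sum_s|M_{\mathcal B_s}(y)|^2$ for small $|y|$. Thus $k=0$ can serve as one of the indices; the second index comes from Step 2.

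\textbf{Main obstacle.} The delicate point is the uniformity in Step 2, namely transporting the lower bound from the scale of $\widehat\delta_{N^{-1}\mathcal D}$ to a lower bound on $\widehat\mu$ at $\xi_i$ with a constant independent of $x$. Lemma~\ref{lem:3.4} is stated only pointwise on $\mathcal U$, which is not compact. We will handle this by restricting to the compact sublevel set $\{\xi:\frac1n\sum_s|M_{\mathcal B_s}(\xi)|^2\ge d_t\}$ and using $t$-periodicity of the relevant index set. If that fails near the boundary of $\mathcal U$, we fall back on the $F$-cancellation from Step 1 and run the contradiction-by-compactness argument used in Lemma~\ref{lem:3.3}.
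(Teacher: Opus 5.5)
Your Step~2 is essentially the paper's proof. You take the two offspring $l_1,l_2$ from Proposition~\ref{prop:4.1}, show that $(x+tl_i)/N$ lies in a compact sublevel set $\{\xi:\frac1n\sum_s|M_{\mathcal B_s}(\xi)|^2\ge c\}\subset\mathcal U$, use compactness to get a uniformly nonvanishing $\widehat\mu((x+tl_i)/N+tr_i)$ with bounded $r_i$, and set $k_i=l_i+Nr_i$, $\varepsilon_t=\rho_t d_t$. The differences are minor: the paper gets the lower bound at $(x+tl_i)/N$ from the $Q_s$-form (Lemmas~\ref{lem:3.3} and~\ref{lem:3.1}), which avoids the $0/0$ in your Cauchy--Schwarz division when $x$ is a common zero of the $M_{\mathcal B_s}$ (continuity in $y$ also fixes this), and it cites Lemma~\ref{lem:3.5} where your use of Lemma~\ref{lem:3.4}, your residue-mod-$N$ distinctness check and your treatment of $x=0$ are actually tidier.
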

\begin{proof}
    According to Proposition  \ref{prop:4.1}, we find $d_t>0$ and $\delta_t>0$, for all $x \in [0,t]$, there exist two distinct integers 
    $$l_1,l_2 \in \{0,1,\ldots,N-1\}$$ such that $$\lvert\widehat{\delta}_{N^{-1}\mathcal{D}}(x+y+tl_i) \rvert^2 \geq d_t \cdot \frac{1}{n} \sum_{s=0}^{n-1}\lvert  M_{\mathcal{B}s}(x+y)  \rvert^2 $$
for all $\lvert y \rvert< \delta_t$. And by Lemma \ref{lem:3.3}, for the two values of $l_i$ chosen above, it follows that
$$\lvert\frac{1}{n}\sum_{s=0}^{n-1} e^{-2\pi i \frac{a_s}{N}(x+tl_i)} Q_s(e(-x)) \rvert^2\frac{1}{n} \sum_{s=0}^{n-1} \lvert M_{\frac{\mathcal{B}_s}{N}} (x+tl_i)\rvert^2 \geq \frac{d_t}{n} \sum_{s=0}^{n-1}\lvert  Q_s(e(-x))  \rvert^2>0 .$$
Moreover, by Lemma \ref{lem:3.1}, for any $l\in\{0,1,\ldots,N-1\}$, 
$$ \frac{N}{n} \frac{1}{n}  \sum_{s=0}^{n-1}\lvert  Q_s(e(-x))  \rvert^2  \geq \lvert\frac{1}{n}\sum_{s=0}^{n-1} e^{-2\pi i \frac{a_s}{N}(x+tl_i)} Q_s(e(-x)) \rvert^2.$$
Hence we obtain that 
$$ \frac{1}{n} \sum_{s=0}^{n-1} \lvert M_{\frac{\mathcal{B}_s}{N}} (x+tl_i)\rvert^2  \geq d_t \frac{n}{N}.$$
Consequently, 
$$ \frac{x+tl_i}{N} \in \mathcal{U}_t:=\{ x\in[0,t]:\frac{1}{n} \sum_{s=0}^{n-1} \lvert M_{\mathcal{B}_s} (x)\rvert^2  \geq d_t \frac{n}{N} \} \subset \mathcal{U}.$$
Therefore, Lemma \ref{lem:3.5} gives two distinct integers $r_1\neq r_2$ such that
$$\widehat\mu(\frac{x+tl_1}{N}+r_1)\ne0,
        \qquad
        \widehat\mu(\frac{x+tl_2}{N}+r_2)\ne0.
$$
By continuity of $\widehat\mu$ and the set $\mathcal{U}_t$ is compact, there are $\rho_t>0$ and $\delta_t'>0$ such that
$$ |\widehat\mu(\frac{x+tl_1}{N}+y+r_1)|^2\geq\rho_t,\qquad |\widehat\mu(\frac{x+tl_2}{N}+y+r_2)|^2\geq\rho_t
$$
for all $\lvert y\rvert<\delta_t'$. By the decomposition property of self-similar measures, we have
\begin{align*}
    \lvert \widehat{\mu}(x+y+t(l_i+Nr_i)) \rvert^2=&\lvert\widehat{\delta}_{N^{-1}\mathcal{D}}(x+y+tl_i) \rvert^2\cdot|\widehat\mu(\frac{x+tl_i}{N}+y+r_i)|^2\\
    \geq & \rho_t d_t \cdot \frac{1}{n} \sum_{s=0}^{n-1}\lvert  M_{\mathcal{B}s}(x+y)  \rvert^2
\end{align*}
Hence we can take $k_i=l_i+Nr_i$ and $\varepsilon_t=\rho_t d_t>0$.

\end{proof}

\begin{corollary}\label{cor:4.3}
    Let $t \in \mathcal{T}_*$. There exist constants $\epsilon_t>0$ and $\delta_t>0$ such that for every $x\in[0,t]$ and every finite set $F\subset \mathbb{Z}$, one can find distanct integers $r_1,r_2 \notin F$ such that
    $$\lvert \widehat{\mu}(x+tr_i+y) \rvert^2 \geq \epsilon_t \cdot \frac{1}{n} \sum_{s=0}^{n-1} \lvert M_{\mathcal{B}_s}(x+y)\rvert^2, \qquad \lvert y \rvert<\delta_t.$$
\end{corollary}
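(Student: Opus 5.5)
The plan is to deduce the corollary from Theorem~\ref{thm:4.2} by a translation argument, using the periodicity of the right-hand side and a zero-avoiding shift. Fix $t\in\mathcal T_*$ and let $\varepsilon_t,\delta_t,m_t$ be the constants of Theorem~\ref{thm:4.2}. Write $R=N^{m_t}$. Given $x\in[0,t]$ and a finite $F\subset\mathbb Z$, we choose an integer $j$ with $|j|$ so large that $F\cap(j+[-R,R])=\varnothing$. For convenience we take $j$ a multiple of $N^{p}$ for some large $p$, since then $tj$ interacts well with the self-similar structure.

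The naive idea is to apply Theorem~\ref{thm:4.2} at the point $x+tj$ and set $r_i=j+k_i$. However, $x+tj$ is not in $[0,t]$, and Theorem~\ref{thm:4.2} only covers $x\in[0,t]$. The fix is to use the product structure instead. Choose $p$ large and write
\[
\widehat\mu(x+y+t(k+N^{p}j'))=\widehat\mu_p(x+y+tk+tN^pj')\,\widehat\mu\Big(\frac{x+y+tk}{N^p}+tj'\Big).
\]
Since $t\in\mathbb Z$ and $M_{\mathcal D}$ is $1$-periodic, $\widehat\mu_p$ is $N^p$-periodic, so the first factor equals $\widehat\mu_p(x+y+tk)$.

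We first pick $k_1,k_2$ from Theorem~\ref{thm:4.2}. By continuity and compactness we get a uniform lower bound $|\widehat\mu_p(x+y+tk_i)|^2\ge \varepsilon_t\frac1n\sum_s|M_{\mathcal B_s}(x+y)|^2$, because $|\widehat\mu|\le|\widehat\mu_p|$. For the second factor we need $|\widehat\mu(\eta+tj')|$ bounded below for some $j'$ with $|j'|$ large, where $\eta=(x+y+tk_i)/N^p$ is close to $0$. The point $0$ is exactly the case flagged in Theorem~\ref{thm:4.2} (``at $x=0$ we may take $k=0$''). Here the key step is to produce infinitely many integers $j'$ with $\widehat\mu(tj')$ bounded away from zero. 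The natural candidates are $j'=N^{q}$: by the self-similar identity, $\widehat\mu(tN^q)=\prod_{i\le q}M_{\mathcal D}(tN^{q-i})\widehat\mu(t)$, and $M_{\mathcal D}$ equals $1$ at integers. So $|\widehat\mu(tN^q)|=|\widehat\mu(t)|$, provided $\widehat\mu(t)\ne0$.

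This is the main obstacle. If $\widehat\mu(t)=0$, we instead use Theorem~\ref{thm:4.2} at $x=0$ to get some $k^*\ne0$ with $|\widehat\mu(tk^*)|>0$, and take $j'=k^*N^q$. The same computation gives $|\widehat\mu(tk^*N^q)|=|\widehat\mu(tk^*)|=:\sqrt{c_t}>0$, uniformly in $q$. By continuity there is a $\delta'>0$ with $|\widehat\mu(\eta+tk^*N^q)|^2\ge c_t/2$ for $|\eta|<\delta'$; this holds uniformly in $q$, because $\widehat\mu(\eta+tk^*N^q)=\widehat\mu_q(\cdots)$ times $\widehat\mu(\eta N^{-q}+tk^*)$, and the $\widehat\mu_q$ factor is close to $1$ for small $\eta$. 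This uniformity needs care, and we plan to settle it by bounding $|1-M_{\mathcal D}(\cdot)|$ by a geometric sum.

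With $p$ fixed large enough that $|x+y+tk_i|/N^p<\delta'$ for all $x\in[0,t]$, $|y|<\delta_t$ and $|k_i|\le R$, we set $r_i=k_i+N^pk^*N^q$ with $q$ large so that $r_i\notin F$. This gives $r_1\ne r_2$ and
\[
|\widehat\mu(x+tr_i+y)|^2\ge \tfrac{c_t}{2}\varepsilon_t\cdot\frac1n\sum_s|M_{\mathcal B_s}(x+y)|^2 .
\]
So the corollary holds with $\epsilon_t=c_t\varepsilon_t/2$.
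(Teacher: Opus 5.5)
Your argument is correct, but it is organized differently from the paper's proof.

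\textbf{The paper's route.} The paper keeps a single index $k_1$ from Theorem~\ref{thm:4.2}. It rescales to the point $(x+tk_1)/N^n$ near the origin and invokes Lemma~\ref{lem:3.5} there to obtain two distinct $v_1,v_2$ with $\widehat\mu((x+tk_1)/N^n+tv_i)\neq0$. It then sets $r_i=k_1+N^nv_i$, so the two integers differ in their ``far'' part.

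\textbf{Your route.} You take both near indices $k_1\neq k_2$ from Theorem~\ref{thm:4.2} and add one common far shift $N^{p+q}k^*$. Here $k^*\neq0$ with $\widehat\mu(tk^*)\neq0$ is fixed once for each $t$. Uniformity in $q$ follows from three facts:
\begin{itemize}
\item the exact identity $\widehat\mu(\eta+tk^*N^q)=\widehat\mu_q(\eta)\,\widehat\mu(\eta N^{-q}+tk^*)$;
\item the bound $|\widehat\mu_q(\eta)|\ge|\widehat\mu(\eta)|$;
\item the Lipschitz continuity of $\widehat\mu$.
\end{itemize}
So the step you flagged as needing care is in fact immediate. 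You also get the explicit constant $\epsilon_t=\varepsilon_t|\widehat\mu(tk^*)|^2/2$.

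\textbf{What your route buys.} It is more robust in two ways.
\begin{itemize}
\item It needs only one nonvanishing value of $\widehat\mu$ on $t\mathbb Z\setminus\{0\}$. The paper instead has to extract a lower bound uniform in $x$ from the pointwise statement of Lemma~\ref{lem:3.5}.
\item Escaping $F$ is automatic, because $k^*\neq0$. In the paper's version nothing prevents one of $v_1,v_2$ from being $0$: near the origin $v=0$ is always admissible. In that case $r_1=k_1$ is not pushed out of $F$ by enlarging $n$.
\end{itemize}

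\textbf{Two minor simplifications.}
\begin{itemize}
\item The case split on whether $\widehat\mu(t)=0$ is unnecessary. Take $k^*$ from Theorem~\ref{thm:4.2} at $x=y=0$: there both indices satisfy $|\widehat\mu(tk_i)|^2\ge\varepsilon_t$, and at least one of them is nonzero.
\item The lower bound for the $\widehat\mu_p$ factor needs no continuity or compactness argument. It follows directly from $|\widehat\mu_{>p}|\le1$.
\end{itemize}
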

\begin{proof}
According to Theorem \ref{thm:4.2}, For any $t\in \mathcal{T}_*$, there exist $\varepsilon_t>0$ and $\delta_t>0$, for all $x \in [0,t]$, there exist two distinct integers 
    $$k_1,k_2 \in \mathbb{Z}$$ such that
    $$\lvert \widehat{\mu}(x+y+tk_i) \rvert^2 \geq \varepsilon_t \cdot \frac{1}{n} \sum_{s=0}^{n-1}\lvert  M_{\mathcal{B}s}(x+y)  \rvert^2,$$
    for all $\lvert y \rvert< \delta_t$. We know that
    $$ \lvert \widehat{\mu}_{>n}(x+y+tk_i) \rvert \leq 1 \text{ for all } n\in\mathbb{N}^+,$$
which implies that 
\begin{equation}\label{eq:4.1}
    \lvert \widehat{\mu}_n(x+y+tk_i) \rvert^2 \geq \varepsilon_t \cdot \frac{1}{n} \sum_{s=0}^{n-1}\lvert  M_{\mathcal{B}s}(x+y)  \rvert^2
\end{equation}
 for all  $n\in\mathbb{N}^+$.
Since $\frac{1}{n} \sum_{s=0}^{n-1}\lvert  M_{\mathcal{B}s}(x)  \rvert^2$ is continuous and $\frac{1}{n} \sum_{s=0}^{n-1}\lvert  M_{\mathcal{B}s}(0)  \rvert^2=1$, there exists $\delta'>0$ such that for every $\lvert x\rvert<\delta$, $$\frac{1}{n} \sum_{s=0}^{n-1}\lvert  M_{\mathcal{B}s}(x)  \rvert^2\geq \frac{1}{2}.$$
Let $n$ be sufficiently large such that
$\lvert\frac{x+tk_i}{N^n} \rvert <\delta$, hence 
$$\frac{x+tk_i}{N^n} \in E:=\{ x\in[0,t]:\frac{1}{n} \sum_{s=0}^{n-1} \lvert M_{\mathcal{B}_s} (x)\rvert^2  \geq \frac{1}{2} \} \subset \mathcal{U}. $$
According to Lemma \ref{lem:3.5}, there are two distanct $v_1,v_2 \in \mathbb{Z}$ such that
$$ \lvert \widehat{\mu}(\frac{x+tk_i}{N^n}+tv_1) \rvert \neq 0, \quad \lvert \widehat{\mu}(\frac{x+tk_i}{N^n}+tv_2) \rvert \neq 0.$$
By the compactness of $E$  and the continuity of $\widehat{\mu}$, there exists a $\delta'>0$ such that 
\begin{equation}\label{eq:4.2}
    \lvert \widehat{\mu}(\frac{x+tk_1}{N^n}+tv_i+y) \rvert^2 >\varepsilon_t'
\end{equation}
for all $\lvert y \rvert<\delta'$.
Because both $k$ and the possible $v_i$ range over fixed finite sets, $n$ can be chosen so large that
$$r_i=k_1+N^n v_i \notin F,\qquad i=1,2.$$
Then by \eqref{eq:4.1} and \eqref{eq:4.2}, 
$$ \lvert \widehat{\mu}(x+y+tr_i) \rvert^2=\lvert \widehat{\mu}_n(x+y+tr_i) \rvert^2 \lvert \widehat{\mu}_{>n}(x+y+tr_i) \rvert^2 \geq \varepsilon_t' \varepsilon_t \cdot \frac{1}{n} \sum_{s=0}^{n-1}\lvert  M_{\mathcal{B}s}(x+y)  \rvert^2.$$
Now take $\epsilon_t:=\varepsilon_t' \varepsilon_t>0$, which completes the proof of the theorem. 
\end{proof}
Next, we construct the spectrum. To ensure that each $t\in \mathcal{T}_*$ appears infinitely many times, we introduce the following notation.
Let 
$$\mathcal{T}_* \cap \mathbb{N}=\{ t_1,t_2,\cdots,t_n, \cdots\} $$
and 
$$\mathcal{S}=\{a_k\}_{k=1}^{\infty}:=\{t_1,t_1,t_2,t_1,t_2,t_3,t_1,\cdots \}.$$

Taking $q_0=0$. Let $C_{p_1} \subset \Gamma_{p_1}$ be any finite set containing $0$ and $\lvert N^{-p_1}\xi \rvert<\delta_{a_1}$. Define the set 
\begin{equation*}
    \widetilde{\Lambda}_{q_1}:=\{c_1+N^{p_1} k_{c_1,p_1}:c_1 \in C_{p_1}\}, 
\end{equation*}
where $k_{c_1,p_1}$ is the integer obtained by applying Theorem \ref{thm:4.2}  to point $N^{-q_1}a_1c_1$ for $c_1 \in C_{p_1}$. And
define the set
$$\overline{\Lambda}_{q_1}:=\{e_1+N^{p_1} k_{e_1,p_1}:e_1 \in \Gamma_{p_1} \backslash C_{p_1}\},$$
where $k_{e_1,p_1}$ is the integer obtained by applying Corollary \ref{cor:4.3} to point $N^{-q_1}a_1e_1$ for $e_1 \in \Gamma_{p_1} \backslash C_{p_1}$. 
Moreover, the choice of $k_{e_1,p_1}$ is such that for any two distinct elements $\lambda_1,\lambda_2\in\overline{\Lambda}_{q_1}$ one has 
$$\lvert\lambda_1 \rvert>2 \lvert\lambda_2 \rvert \text{ or } \lvert\lambda_2 \rvert>2 \lvert\lambda_1 \rvert.$$ 
For any $\lambda \in t(\widetilde{\Lambda}_{q_1} \cup \overline{\Lambda}_{q_1})$, $\lambda=a_1d_1+N^{q_1}a_1k_{d_1,p_1}$, where $d_1\in \{c_1,e_1\} \subset \Gamma_{p_1}$. Combining the selection rule of $k_{d_1,p_1}$, Theorem \ref{thm:4.2} and Corollary \ref{cor:4.3}, we obtain that
\begin{align}\label{eq:4.3}
     \lvert \widehat{\mu}_{>q_1} (\xi+\lambda)\rvert^2 =&\lvert \widehat{\mu} (N^{-q_1}\xi+N^{-q_1}a_1d_1+a_1k_{d_1,p_1})\rvert^2\notag\\ 
     \geq &\min\{\varepsilon_{a_1},\epsilon_{a_1}\}\cdot\frac{1}{n} \sum_{s=0}^{n-1}\lvert  M_{\mathcal{B}_s}(N^{-q_1}\xi+N^{-q_1}a_1d_1)  \rvert^2\notag\\
     = &\min\{\varepsilon_{a_1},\epsilon_{a_1}\}\cdot\frac{1}{n} \sum_{s=0}^{n-1}\lvert  M_{N^{-q_1}\mathcal{B}_s}(\xi+\lambda)  \rvert^2.
\end{align}
Assume that the $(n-1)$-th level has been constructed. We now consider the 
$n$-th level. Choose that $p_n$ satisfy 
$$\sup\{\lvert a_nN^{-q_n}\lambda_{n-1}\rvert:\lambda_{n-1} \in \widetilde{\Lambda}_{q_{n-1}} \cup \overline{\Lambda}_{q_{n-1}}\}<\delta_{a_n}.$$
Let $C_{p_n} \subset \Gamma_{p_n}$ be any finite set containing $0$. Define the set 
\begin{equation*}
\widetilde{\Lambda}_{q_n}=\widetilde{\Lambda}_{q_{n-1}}+N^{q_{n-1}}\{c_n+N^{p_n} k_{c_n,p_n}:c_n \in C_{p_n}\}, 
\end{equation*}
where $k_{c_1,p_1}$ is the integer obtained by applying Theorem \ref{thm:4.2}  to point $N^{-p_n}a_nc_n$ for $c_n \in C_{p_n}$. 
And define the set
\begin{align*}
\overline{\Lambda}_{q_n}=&\overline{\Lambda}_{q_{n-1}}+N^{q_{n-1}}\{e_n+N^{p_n} k_{e_n,p_n}:e_n \in \Gamma_{p_n}\}\\
\cup & (\widetilde{\Lambda}_{q_{n-1}}+N^{q_{n-1}}\{e_n'+N^{p_n} k_{e_n',p_n}:e_n' \in \Gamma_{p_n}\backslash C_{p_n}\}),
\end{align*}
where $k_{e_n,p_n}$ and $k_{e_n',p_n}$ are integers obtained by applying Corollary \ref{cor:4.3}  to points $N^{-p_n}a_ne_n$ and $N^{-p_n}a_ne_n'$. Moreover, the choice of $k_{e_n,p_n}, k_{e_n',p_n}$ is such that for any two distinct elements $\lambda_1,\lambda_2\in\overline{\Lambda}_{q_n}$ one has 
$$\lvert\lambda_1 \rvert>2 \lvert\lambda_2 \rvert \text{ or } \lvert\lambda_2 \rvert>2 \lvert\lambda_1 \rvert.$$ 
For any $\lambda_n \in a_n (\overline{\Lambda}_{q_n} \cup \widetilde{\Lambda}_{q_n})$, $$\lambda_n=a_n\lambda_{n-1}+a_nN^{q_{n-1}}d_n+a_nN^{q_n}k_{d_n,p_n}$$
where $d_n\in \{c_n,e_n,e_n'\} \subset \Gamma_{p_n}$. This construction ensures $$\widetilde{\Lambda}_{q_{n-1}}\subset\widetilde{\Lambda}_{q_n},\qquad \overline{\Lambda}_{q_{n-1}}\subset \overline{\Lambda}_{q_n}.$$ 
Combining the selection rule of $k_{d_n,p_n}$, Theorem \ref{thm:4.2} and Corollary \ref{cor:4.3}, we obtain that
\begin{align*}
     \lvert \widehat{\mu}_{>q_n} (\xi+\lambda_n)\rvert^2 =&\lvert \widehat{\mu} (N^{-q_n}\xi+N^{-q_n}a_n\lambda_{n-1}+N^{-q_n}a_nd_n+a_nk_{d_n,p_n})\rvert^2\notag\\ 
     \geq &\min\{\varepsilon_{a_n},\epsilon_{a_n}\}\cdot\frac{1}{n} \sum_{s=0}^{n-1}\lvert  M_{\mathcal{B}_s}(N^{-q_n}\xi+N^{-q_n}a_n\lambda_{n-1}+N^{-p_n}a_nd_n)  \rvert^2\notag\\
     = &\min\{\varepsilon_{a_n},\epsilon_{a_n}\}\cdot\frac{1}{n} \sum_{s=0}^{n-1}\lvert  M_{N^{-q_n}\mathcal{B}_s}(\xi+\lambda_n)  \rvert^2.
\end{align*}
Thus, we obtain two monotone increasing sequences of sets $\{\widetilde{\Lambda}_{q_k}\}_{k=1}^{\infty}$ and $\{\overline{\Lambda}_{q_{k}}\}_{n=1}^{\infty}$. Moreover, for every $k\geq 1$, every $\lambda\in\Lambda_{q_k}:=\widetilde{\Lambda}_{q_k} \cup \overline{\Lambda}_{q_{k}}$, and every $\xi\in[0,1]$,
\begin{align}\label{eq:4.4}
    \left|\widehat{\mu}_{>q_k}(\xi+a_k\lambda)\right|^2
\geq
\frac{c_{a_k}}{n}
\sum_{s=0}^{n-1}
\left|
M_{N^{-q_k}\mathcal B_s}(\xi+a_k\lambda)
\right|^2, \quad c_{a_k}=\min\{\varepsilon_{a_k},\epsilon_{a_k}\}.
\end{align}
Let $$\widetilde{\Lambda}=\bigcup_{k=1}^{\infty} \widetilde{\Lambda}_{q_k}\text{ and }\overline{\Lambda}=\bigcup_{k=1}^{\infty}\overline{\Lambda}_{q_{k}}.$$

\begin{theorem}\label{thm: 4.4}
   Let $t\in \mathcal{T}_*$ and $\Lambda=\frac{1}{N}\mathcal{L}_2+\widetilde{\Lambda} \cup \overline{\Lambda}$. Then $t\Lambda$ is spectrum of $\mu$.
\end{theorem}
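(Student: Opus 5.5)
Fix $t\in\mathcal T_*$. The plan is the standard two-part scheme: first show $E(t\Lambda)$ is orthogonal in $L^2(\mu)$, then show completeness through the Jorgensen–Pedersen criterion $Q_{t\Lambda}(\xi):=\sum_{\lambda\in t\Lambda}|\widehat\mu(\xi+\lambda)|^2\equiv 1$. The term $\frac1N\mathcal L_2$ is the product-form correction of \cite{AL01}. Their spectrum has the shape $\frac1N\mathcal L_2+\Lambda'$, with $\Lambda'$ built from $\Gamma_p$. Moreover $\sum_{\ell\in\mathcal L_2}|M_{\mathcal B_s}(\xi+t\ell/N)|^2$ equals $\#\mathcal L_2$ times a constant, because $(N,\mathcal B_s,t\mathcal L_2)$ is a Hadamard triple. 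This is what converts the right-hand side $\frac1n\sum_s|M_{\mathcal B_s}(\xi)|^2$ of Proposition~\ref{prop:2.2} into the constant $1$.

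\textbf{Orthogonality.} Every element of $\widetilde\Lambda\cup\overline\Lambda$ has the form $\sum_j N^{q_{j-1}}(d_j+N^{p_j}k_j)$ with $d_j\in\Gamma_{p_j}$, so it is congruent to an element of $\Gamma_{q_k}$ modulo $N^{q_k}$. Two distinct such elements first differ at some digit level. Write $\lambda-\lambda'=N^{m}(\ell-\ell')+N^{m+1}z$ with $\ell\neq\ell'\in\mathcal L$, or with only the $\mathcal L_2$-part coming from the $\frac1N\mathcal L_2$ shift. Then $t(\lambda-\lambda')$ lies in $N^{m+1}\mathcal Z(\widehat\delta_{N^{-1}\mathcal D})\subset\mathcal Z(\widehat\mu)$. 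This is exactly the Hadamard-triple property of $(N,\mathcal A\oplus\mathcal B_s,t\mathcal L_1\oplus t\mathcal L_2)$ together with the identity $\widehat\delta_{N^{-1}\mathcal D}=\frac1n\sum_s e^{-2\pi i a_s\xi/N}M_{\mathcal B_s}$. The pure-$\mathcal L_2$ differences are handled by $(N,\mathcal B_s,t\mathcal L_2)$, and the pure-$\mathcal L_1$ differences by $(N,\mathcal A,t\mathcal L_1)$. I would verify these three cases separately, following \cite[Prop.~4.3]{AL01}, since $t\in\mathcal T_*$ supplies all three Hadamard conditions.

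\textbf{Completeness.} Orthogonality together with Proposition~\ref{prop:2.2}(i) (applied with $t\mathcal L$ in place of $\mathcal L$, which is legitimate because $t\in\mathcal T_*$ and Lemma~\ref{lem:2.1} only uses the Hadamard identities) gives $Q_{t\Lambda}\le 1$. For equality I would invoke Proposition~\ref{prop:2.2}(ii). Its hypothesis is a uniform lower bound $|\widehat\mu_{>q_k}(\xi+t\lambda)|^2\ge \frac cn\sum_s|M_{N^{-q_k}\mathcal B_s}(\xi+t\lambda)|^2$ for all $k$, all $\lambda\in\Lambda_{q_k}$ and all $\xi\in[0,1]$. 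This is the content of \eqref{eq:4.4}, but only at those levels $k$ with $a_k=t$. Since $t$ appears infinitely often in $\mathcal S$, I would pass to the subsequence of indices $k_j$ with $a_{k_j}=t$. I would regroup the blocks between consecutive such indices into single blocks $\widetilde\Gamma$, which are still complete residue-type sets congruent to $\Gamma_{q_{k_j}-q_{k_{j-1}}}$. Then Lemma~\ref{lem:2.1} and Proposition~\ref{prop:2.2}(ii) apply along this subsequence with constant $c=c_t$. The choice of $p_n$, which makes $|a_nN^{-q_n}\lambda_{n-1}|<\delta_{a_n}$, guarantees that the perturbation $y$ in Theorem~\ref{thm:4.2} and Corollary~\ref{cor:4.3} stays in the admissible range. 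Summing over the $\frac1N\mathcal L_2$ shifts then upgrades $\frac1n\sum_s|M_{\mathcal B_s}|^2$ to $1$, and we obtain $Q_{t\Lambda}\equiv1$ on $[0,1]$, hence everywhere by analyticity.

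The main obstacle is this regrouping step. At a level $k$ with $a_k\neq t$, the integers $k_{d,p}$ were chosen for the multiplier $a_k$, not for $t$, so \eqref{eq:4.4} gives no information for $t$ there. I must check that the lower bound required at the coarser levels $q_{k_j}$ holds uniformly for all $\lambda\in\Lambda_{q_{k_j}}$, including elements whose digits at intermediate levels were tailored to other multipliers. The point to verify is that the estimate in \eqref{eq:4.4} depends only on the last block, namely the shift $a_kk_{d_k,p_k}$ applied to the point $N^{-q_k}\xi+N^{-q_k}a_k\lambda_{k-1}$, which has small modulus. Earlier blocks enter only through this small perturbation. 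If so, the uniform bound holds along the $t$-subsequence, and the Jorgensen–Pedersen argument in the form of Proposition~\ref{prop:2.2}(ii) finishes the proof.
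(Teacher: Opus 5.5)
Your route is the paper's. You restrict to the levels $k_j$ with $a_{k_j}=t$ and combine Proposition~\ref{prop:2.2} with \eqref{eq:4.4} to get $\sum_{\lambda\in\widetilde\Lambda\cup\overline\Lambda}|\widehat\mu(\xi+t\lambda)|^2=\frac1n\sum_s|M_{\mathcal B_s}(\xi)|^2$. You then sum over the shifts $\frac tN\mathcal L_2$, using $\sum_{\ell\in\mathcal L_2}|M_{\mathcal B_s}(\xi+t\ell/N)|^2=1$, which holds because $(N,\mathcal B_s,t\mathcal L_2)$ is Hadamard. You also read \eqref{eq:4.4} correctly: at a level $k_j$ it holds for \emph{every} $\lambda\in\Lambda_{q_{k_j}}$. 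The last lift was chosen for the multiplier $t$, and the earlier blocks enter only through the perturbation $N^{-q_{k_j}}(\xi+t\lambda_{k_j-1})$, which the choice of $p_{k_j}$ keeps small.

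The step that is wrong as written is the regrouping: the relative descendants between levels $q_{k_{j-1}}$ and $q_{k_j}$ are not congruent to $\Gamma_{q_{k_j}-q_{k_{j-1}}}$. The intermediate lifts fall inside the regrouped block. Already for two consecutive original levels $n,n+1$ in the block, the relative digits modulo $N^{p_n+p_{n+1}}$ are $d_n+N^{p_n}(k_{d_n,p_n}+d_{n+1})$ with $d_{n+1}\in\Gamma_{p_{n+1}}$. Since $\#\Gamma_{p_{n+1}}=(\#\mathcal D)^{p_{n+1}}<N^{p_{n+1}}$, the translate $k_{d_n,p_n}+\Gamma_{p_{n+1}}$ is in general a different residue set.

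Regrouping is, however, unnecessary. The induction of Lemma~\ref{lem:2.1} runs parent by parent through the original levels. It only uses that each parent's children are $\equiv\Gamma_{p_n}\pmod{N^{p_n}}$, since the identity of \cite[Lemma~4.2]{AL01} holds at every real point and the lifts do no harm. So, with $t\mathcal L$ in place of $\mathcal L$, the identity holds at every level $q_k$ of the tree $\widetilde\Lambda\cup\overline\Lambda$.

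The proof of Proposition~\ref{prop:2.2}(ii) is then a dominated-convergence argument. On $\Lambda_{q_k}$, the weights $|\widehat\mu_{q_k}(\xi+t\lambda)|^2\frac1n\sum_s|M_{N^{-q_k}\mathcal B_s}(\xi+t\lambda)|^2$ have total mass $\frac1n\sum_s|M_{\mathcal B_s}(\xi)|^2$ and tend pointwise to $|\widehat\mu(\xi+t\lambda)|^2$. At every level where the lower bound holds, they are bounded by $c_t^{-1}|\widehat\mu(\xi+t\lambda)|^2$, which is summable by part (i). So it suffices to let $k\to\infty$ along $k_j$.

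Two smaller repairs are needed. First, Proposition~\ref{prop:2.2}(ii) gives the identity only for $\xi\in[0,1]$, but the $\mathcal L_2$-summation evaluates it at $\xi+t\ell/N$. Extend it to $\mathbb R$ first: both sides are entire, because $t(\widetilde\Lambda\cup\overline\Lambda)$ is orthogonal. Second, your separate orthogonality check can be dropped. Proposition~\ref{prop:2.2}(i) plus the $\mathcal L_2$-summation already gives $Q_{t\Lambda}\le 1$ on $\mathbb R$, which forces orthogonality. As written, the check also misplaces the zero when $M_{\mathcal A}(t(\ell-\ell')/N)=0$: in that case $t(\lambda-\lambda')\in N^{m}\mathcal Z(\widehat\delta_{N^{-1}\mathcal D})$, not $N^{m+1}\mathcal Z(\widehat\delta_{N^{-1}\mathcal D})$.
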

\begin{proof}
For each $t\in \mathcal{T}_*$, there exists a subsequence $\{a_{n_k}\} \subset \{a_{k}\}$ such that $a_{n_k}=t$. 
     Combining  Proposition  \ref{prop:2.2} and \eqref{eq:4.4},  
     $$\sum_{\lambda \in \widetilde{\Lambda} \cup \overline{\Lambda}} \lvert \widehat{\mu}(\xi+t\lambda) \rvert^2=\frac{1}{n}\sum_{s=0}^{n-1} \lvert M_{\mathcal{B}_s} (\xi)\rvert^2.$$
It is well known that $\Lambda$ is a spectrum for a prrobability measure $\mu$ if and only if 
$$Q(\xi)=\sum_{\lambda \in \Lambda} \lvert \widehat{\mu}(\xi+\lambda) \rvert^2=1. $$
This yield that 
$$\sum_{l\in\mathcal{L}_2} \lvert M_{\mathcal{B}_s} (\xi+\frac{t}{N}l)\rvert^2
         = 1.$$
Therefore, 
     \begin{align*}
         \sum_{\lambda \in t\Lambda} \lvert \widehat{\mu}(\xi+\lambda) \rvert^2=\sum_{l\in\mathcal{L}_2}\sum_{\lambda \in t\Lambda} \lvert \widehat{\mu}(\xi+\frac{t}{N}l+\lambda) \rvert^2 
         = \sum_{l\in\mathcal{L}_2}\frac{1}{n}\sum_{s=0}^{n-1} \lvert M_{\mathcal{B}_s} (\xi+\frac{t}{N}l)\rvert^2
         = 1.
     \end{align*}
     This implies that $t\Lambda$ is spectrum of $\mu$.
\end{proof}
 
By the translation invariance and countable stability of Beurling dimension, together with the sparsity of the set $ \overline{\Lambda}$, we obtain that $\dim_{Be}(\Lambda)=\dim_{Be}(\widetilde{\Lambda})$.

\begin{theorem}\label{thm:4.5}
    Let $$ 0 \leq s \leq \frac{\log\# \mathcal{D}}{\log N}.$$ The construction above can be carried out so that there are continuum many common spectra $\Lambda$ satisfying $$\dim_{Be} (\Lambda)=s. $$
\end{theorem}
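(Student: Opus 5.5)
The construction preceding Theorem~\ref{thm: 4.4} has two free parameters: the level lengths $p_k$ and the ``kept'' sets $C_{p_k}\subset\Gamma_{p_k}$. The plan is to use the $C_{p_k}$ to control the Beurling dimension of $\widetilde\Lambda$, and to use a binary choice at each level to produce continuum many distinct spectra. Three facts will be used throughout. By Theorem~\ref{thm: 4.4}, every admissible choice gives a $\Lambda$ with $t\Lambda$ a spectrum for all $t\in\mathcal T_*$. By the remark before the statement, $\dim_{Be}(\Lambda)=\dim_{Be}(\widetilde\Lambda)$, because $\overline\Lambda$ is lacunary (any two elements differ in size by a factor $>2$) and therefore has Beurling dimension $0$. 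Finally, the perturbations $N^{p_k}k_{c_k,p_k}$ in $\widetilde\Lambda$ come from Theorem~\ref{thm:4.2}, so $|k_{c,p}|\le N^{m_t}$ with $t$ ranging over the finitely many values occurring at a given level. Hence $\widetilde\Lambda_{q_k}$ is a bounded perturbation, at scale $N^{q_k}$, of a digit-type set, and its counting function can be estimated as for a Moran-type set.

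\textbf{Dimension.} Write $\#\mathcal L=\#\mathcal D=:M$, so that $\#\Gamma_p=M^p$ and $\Gamma_p$ lies in an interval of length $\lesssim N^p$. Fix $s\in[0,\log M/\log N]$. I will choose $C_{p_k}\subset\Gamma_{p_k}$ with $0\in C_{p_k}$ and take it to be a product of digit layers: at each of the $p_k$ digit positions, keep either all of $\mathcal L$ or only $\{0\}$. The number of full positions $j_k$ is chosen so that $\sum_{i\le k}j_i\log M\approx s\,q_k\log N$, with the error $o(q_k)$. Since $\#\widetilde\Lambda_{q_k}=\prod_i\#C_{p_i}$ and $\widetilde\Lambda_{q_k}$ lies in an interval of length $\asymp N^{q_k}$ (using the bounded $k$'s and $p_k\to\infty$), windows of length $h=N^{q_k}$ give the lower bound $\dim_{Be}\ge s$.

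For the upper bound, a window of length $h$ with $N^{q}\le h<N^{q+1}$ meets only boundedly many translates of the level-$q$ cells, because of the block structure. Each cell contains at most $M^{\#\{\text{full positions}\le q+1\}}\lesssim h^{s+o(1)}$ points. One must also check that the perturbations at later levels do not bunch points into a short window. Points that differ first at level $k$ differ by about $N^{q_{k-1}}\cdot(\text{digit difference}) + O(N^{q_k+m})$, and because each $p_k$ is large relative to $m_t$ these gaps stay of order $N^{q_{k-1}}$.

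\textbf{Continuum.} At each level $k\ge2$, I will allow two admissible choices of $C_{p_k}$ with the same cardinality, for instance shifting which digit position is full. Alternatively, I will choose between two different $p_k$ with the same asymptotic density. Distinct binary sequences then give distinct $\widetilde\Lambda$, since these sets first differ at a finite level, and the dimension computation above is unaffected. The case $s=0$ is handled by taking $j_k=0$ at all but sparse levels, and one binary choice per level still gives $2^{\aleph_0}$ sets. Modulo translation, distinct sequences still give distinct sets because all of them contain $0$ and share $\tfrac1N\mathcal L_2$.

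\textbf{Main obstacle.} I expect the main difficulty to be the Beurling upper bound. The $k_{c,p}$ come from Theorem~\ref{thm:4.2} only with the bound $N^{m_{a_k}}$, and the multipliers $a_k$ vary over all of $\mathcal T_*$, so $m_{a_k}$ may be unbounded. I would handle this by choosing $p_k\gg m_{a_k}$ at each step, which also satisfies the $\delta_{a_k}$ condition used in the construction. This keeps $m_{a_k}/q_k\to0$, so the perturbations cost only a factor $h^{o(1)}$.
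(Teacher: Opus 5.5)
Your dimension argument is essentially the paper's. $C_{p_k}$ is assembled layer by layer from subsets of $\mathcal L$ (you restrict to $\{0\}$ and $\mathcal L$), the lower bound uses the window around $0$ containing $\widetilde\Lambda_{q_k}$, and the upper bound counts digit strings. Your remark that $m_{a_k}$ need not stay bounded as $a_k$ runs through $\mathcal T_*$, cured by $p_k\gg m_{a_k}$, is correct. It repairs a point the paper passes over: it divides by $\log 2N^{q_n}$ for a window of length $2N^{q_n+m_t+1}$. This fix is needed for the \emph{lower} bound, though, not the upper one.

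For the continuum you take a different route. The paper fixes $C_{p_n}$ and branches on the two admissible integers $k_{d_n,p_n}$ supplied by Theorem~\ref{thm:4.2} and Corollary~\ref{cor:4.3}. You branch on $C_{p_k}$ itself, which Theorem~\ref{thm: 4.4} permits. Your version never needs the \emph{two distinct integers} part of Theorem~\ref{thm:4.2}, and distinctness of the $\widetilde\Lambda$'s is immediate from their residues modulo $N^{q_k}$. The paper instead must exclude that later digits compensate a change of $k_{d_n,p_n}$, which it only sketches.

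Two steps of your dimension count fail as written.

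(i) You prescribe $\sum_{i\le k}j_i$ only at the block ends $q_k$. But $p_k$ may be, and typically is, much larger than $q_{k-1}+m_{a_k}$, because the $\delta_{a_k}$-condition must absorb the large points of $\overline\Lambda_{q_{k-1}}$. So the placement of full layers inside a block matters. Suppose they are the first $j_k$ layers of block $k$. Pigeonholing the $M^{j_k}$ points $N^{q_{k-1}}c+N^{q_k}k_{c,p_k}$ over the at most $2N^{m_{a_k}}+1$ values of $k_{c,p_k}$ puts $M^{j_k}/(2N^{m_{a_k}}+1)$ of them in a window of length about $N^{q_{k-1}+j_k+1}$. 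For $0<s<\log M/\log N$ this forces $\dim_{Be}\widetilde\Lambda\ge\log M/\log N$. You need $\#\{\text{full positions}\le q\}\log M=sq\log N+o(q)$ for \emph{every} $q$. That is exactly the paper's requirement that $\log(\#C_1'\cdots\#C_n')/(n\log N)\to s$ along all $n$.

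(ii) Your reason why later perturbations do not bunch points, that $p_k\gg m_t$ keeps gaps of order $N^{q_{k-1}}$, is wrong. When nonzero, $N^{q_k}(k_c-k_{c'})$ has modulus at least $N^{q_k}$, which is not small compared with $N^{q_{k-1}}(c-c')$ however large $p_k$ is. The right mechanism is arithmetic. Distinct points of $\widetilde\Lambda$ congruent modulo $N^{j}$ differ by at least $N^{j}$. Hence a window of length $h<N^{q+1}$ holds at most one point per residue class modulo $N^{q+1}$, and $\widetilde\Lambda$ meets exactly $\prod_{i\le q+1}\#C_i'$ such classes. This is the paper's digit count, and it is blind to the size of the $k$'s.

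(iii) Your continuum argument stops at distinct $\widetilde\Lambda$, but the spectra are $\frac1N\mathcal L_2+(\widetilde\Lambda\cup\overline\Lambda)$. These determine $\widetilde\Lambda\cup\overline\Lambda$, since $\mathcal L_2$ consists of distinct residues modulo $N$. Moreover, $\widetilde\Lambda\cup\overline\Lambda$ represents every residue class of $\Gamma_{q_k}$ modulo $N^{q_k}$ whatever $C_{p_k}$ is, so the class you drop from $\widetilde\Lambda$ reappears in $\overline\Lambda$. You must show that for $c\in C^{(I)}_{p_k}\setminus C^{(J)}_{p_k}$, the point $N^{q_{k-1}}c+N^{q_k}k_{c,p_k}$ of $\Lambda(I)$ is not in $\Lambda(J)$. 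This holds if every perturbation in $\overline\Lambda$ is chosen, via the clause $r_i\notin F$ of Corollary~\ref{cor:4.3}, so that each new point exceeds in modulus twice every earlier point and twice the a priori bound $N^{q_k+m_{a_k}+O(1)}$ on $\widetilde\Lambda_{q_k}$. Then all points of $\Lambda(J)$ in that residue class are too large, while $\widetilde\Lambda(J)$ misses the class.

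Translation is harmless, but not for the reason you give. A translation class contains at most countably many members of a family of sets sharing a common point.

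Finally, the lacunarity of $\overline\Lambda$ that you import from the remark before the statement cannot hold as the construction is written. The set $\overline\Lambda_{q_n}$ contains $\widetilde\Lambda_{q_{n-1}}+N^{q_{n-1}}e'+N^{q_n}k_{e',p_n}$, a translate of a whole block, because $k_{e',p_n}$ depends only on $e'$. What you need is $\dim_{Be}\overline\Lambda\le s$. The same dominating choice gives this, since $\overline\Lambda$ then consists of widely separated translates of pieces already controlled. This last gap is shared with the paper.
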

\begin{proof}
    Given $s \in [0, \frac{\log\# \mathcal{D}}{\log N}]$, we can select subsets $$C_{p_n}=C_{p_1+\cdots+p_{n-1}+1}^{'}+NC_{p_1+\cdots+p_{n-1}+2}^{'}+\cdots+N^{p_n-1}C_{p_1+\cdots+p_{n-1}}^{'}\subset \Gamma_{p_n}, $$ 
  where $C_i^{'} \subset \mathcal{L}$ for $i\in\mathbb{N}$, and the following limit exists:
    $$s=\lim_{n \rightarrow \infty} \frac{\log \#C_{1}'\#C_{2}'\cdots\#C_{n}' }{n \log N}.$$
For any $\lambda_n \in \widetilde{\Lambda}_{q_n}$, with the notation introduced in the above construction, there exist $\lambda_{n-1} \in \widetilde{\Lambda}_{q_{n-1}}$, $c_n \in C_{p_n}$ and $ \lvert k_{c_n,p_n} \rvert \leq N^{m_t}$ such that 
$$ \lvert \lambda_n \rvert=\lvert \lambda_{n-1}+N^{q_{n-1}}c_n+N^{q_n}k_{c_n,p_n} \rvert \leq N^{q_n+m_t+1}.$$
This implies that 
\begin{align*}
    \dim_{Be}(\widetilde{\Lambda} )=&\limsup_{h \rightarrow \infty} \sup_{x \in \mathbb{R}} \frac{\log\#(\widetilde{\Lambda} \cap (x-\frac{1}{2}h,x+\frac{1}{2}h))}{\log h}\\
    \geq & \limsup_{n \rightarrow \infty} \frac{\log\#(\widetilde{\Lambda} \cap (-N^{q_n+m_t+1},N^{q_n+m_t+1}))}{\log 2N^{q_n}}\\
    \geq &\limsup_{n \rightarrow \infty}  \frac{\log \#C_{p_1}\#C_{p_2}\cdots\#C_{p_n} }{q_n \log N}=s.
\end{align*}
In the following, we prove $\dim_{Be}(\widetilde{\Lambda} ) \geq s$. Fix $x \in \mathbb{R}$ and consider the counting set 
$$ A_{x}:=(x-\frac{1}{2}h,x+\frac{1}{2}h) \cap \widetilde{\Lambda}. $$
For $N^m \leq h <N^{m+1}$ with $m \geq 1$, each $\lambda \in A_{x}$ has a unique $N$-ary expansion:
$$\lambda=\sum_{j=0}^{\infty} i_j N^j, \quad i_j \in \{0,1,\ldots,N-1\}. $$
It is easy to see that the number of tail sequence 
$$\{i_{m+1}i_{m+2}\cdots:\lambda \in A_x\}$$
is at most two. Now we estimate the number of possible occurrences of the coefficients in this $N$-ary expansion, i.e., $\#\{i_0i_1 \cdots i_m: \lambda \in A_x\}$. The structure of $\lambda$ as 
$$ \lambda=\sum_{k=1}^n(N^{q_{k-1}}c_k+N^{q_k}k_{c_k,p_k})$$
implies modular constraints 
$$ \lambda=i_0+i_1 N+\cdots+i_{q_1-1}N^{q_1-1} = c_1 \pmod{N^{q_1}}$$
This yields the initial bound:
$$\#\{ i_0i_1 \cdots i_{q_1}: \lambda \in A_x \} \leq \#C_{p_1}=\prod_{i=1}^{p_1}\#C_i'$$
By repeatedly applying this method, we can obtain an estimate for 
$$\# A_x \leq 2\prod_{i=1}^{n} \#C_i'$$
 Hence 
\begin{align*}
    \dim_{Be}(\widetilde{\Lambda} )=&\limsup_{h \rightarrow \infty} \sup_{x \in \mathbb{R}} \frac{\log\#(\widetilde{\Lambda} \cap (x-\frac{1}{2}h,x+\frac{1}{2}h))}{\log h}\\
    \leq & \lim_{n \rightarrow \infty} \frac{\log \#C_{1}'\#C_{2}'\cdots\#C_{n}' }{n \log N}=s .
\end{align*}
In summary, we get $\dim_{Be}(\Lambda)=\dim_{Be}(\widetilde{\Lambda} )=s$. Finally, we prove that the family of such sets is uncountable. In our preceding construction, we know that for each $d_n\in \Gamma_{p_n}$, there are two choices for the corresponding $k_{d_n,p_n}$. Now it suffices to, for each level, only the $k_{d_n,p_n}$ associated with some nonzero element $d_n\in \Gamma_{p_n}$ is permitted to have two options; for the remaining elements are each fixed to one value.
   Thus we obtain the sequence of sets $\{\Lambda(I): I \in \{0,1\}^{\infty}\}$. 
   We want to prove that $\Lambda(I) \neq \Lambda(J)$ for any $I\neq J$. 
   Let $n=\min\{k:i_k\neq j_k\}$, i.e., there exists some $d_n$ such that the corresponding $k_{d_n,p_n}, k_{d_n',p_n}$ are distinct. It is without loss of generality to assume that $$k_{d_n,p_n}>k_{d_n,p_n}'.$$
Let $$\lambda*=N^{q_{n-1}}d_n+N^{q_n} k_{d_n,p_n}' \in \Lambda(I)$$
and for any $\lambda \in \Lambda(J)$, 
$$\lambda=d_1'+N^{q_1}k_{d_1',p_1}+N^{q_1}d_2'+N^{q_2}k_{d_2',p_1}+\cdots$$
Since $d_n' \in \Gamma_{p_n}$, then 
$d_i'=0$ for any $i=1,2,\ldots,n-1$ and $d_n=d_n'$. Hence
$$ \lambda-\lambda*=N^{q_n}(k_{d_n',p_n}-k_{d_n,p_n})+ N^{q_{n}} d_{n+1}'+ N^{q_{n+1}}k_{d_{n+1}',p_{n+1}}+\cdots$$
This implies that $\Lambda(I) \neq \Lambda(J)$. Therefore, $\{\Lambda(I): I \in \{0,1\}^{\infty}\}$ has the cardinality of the continuum. 
 
\end{proof}

\begin{proof}[The proof of Theorem \ref{thm:1.7}]

Combining Theorems \ref{thm: 4.4} and \ref{thm:4.5}, we complete the proof of the theorem.

\end{proof}

\section{The proof of Theorem \ref{thm:four-digit-application}}

We keep the notation introduced in Section~1.  Dividing the digit set by its common odd divisor only produces a nonzero spatial dilation of the measure and leaves the spectral-eigenvalue problem unchanged.  Thus, after this harmless normalization, we may write
\[
b=2^\tau\ell_1,\qquad c=a+2^\tau\ell_2,\qquad \gcd(a,b,c)=1.
\]
Set
\[
p_1=\gcd(a,c-b),\qquad p_2=\gcd(c,b-a),\qquad p_3=\gcd(\ell_1,\ell_2).
\]
The integers $p_1,p_2,p_3$ are odd and pairwise coprime: any odd prime dividing two of them would divide $a,b,c$.

\subsection{The dyadic obstruction}

We first identify the lowest branch of the Fourier zero set.

\begin{lemma}\label{lem:four-zero-tower}
For
\[
M_D(\xi)=\frac14\sum_{d\in D}e^{-2\pi i d\xi},
\]
one has
\[
\mathcal Z(M_D)
=
\frac{1}{2p_1}\mathcal O
\cup
\frac{1}{2p_2}\mathcal O
\cup
\frac{1}{2^{\tau+1}p_3}\mathcal O.
\]
Consequently,
\[
\mathcal Z(\widehat\mu)
=
\bigcup_{j\ge1}\left(
\frac{N^j}{2p_1}\mathcal O
\cup
\frac{N^j}{2p_2}\mathcal O
\cup
\frac{N^j}{2^{\tau+1}p_3}\mathcal O
\right).
\]
Its unique lowest dyadic branch is
\[
R_3:=\frac{N}{2^{\tau+1}p_3}\mathcal O,
\]
whose elements have dyadic valuation $\beta-\tau-1$.
\end{lemma}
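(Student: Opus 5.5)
The plan is to factor the mask polynomial and read off the zeros factor by factor. Write $D=\{0,a,b,c\}$ with $b=2^\tau\ell_1$ and $c=a+2^\tau\ell_2$. The identity $\{0,a,b,c\}$ as a sum of two-element sets fails in general, so I will not try to factor $D$ as a direct sum. Instead I will use the standard four-digit computation of An, He and Lai \cite{AHLFour}.

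Set $z=e^{-2\pi i\xi}$. Then $4M_D(\xi)=1+z^a+z^b+z^c$. Since every term has modulus one, the sum $1+z^a+z^b+z^c$ vanishes precisely when the four unit vectors pair off into antipodal pairs. There are three possible pairings:
\begin{itemize}
\item[(P1)] $z^a=-1$ and $z^c=-z^b$;
\item[(P2)] $z^b=-1$ and $z^c=-z^a$;
\item[(P3)] $z^c=-1$ and $z^b=-z^a$.
\end{itemize}
This is the elementary fact that four unit vectors sum to zero iff they form two cancelling pairs: take $u+v=-(w+x)$, compare lengths and use the parallelogram.

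Each pairing translates into arithmetic conditions on $\xi$:
\begin{itemize}
\item (P1) says $a\xi\in\frac12\mathcal O$ and $(c-b)\xi\in\frac12\mathcal O$.
\item (P2) says $b\xi\in\frac12\mathcal O$ and $(c-a)\xi=2^\tau\ell_2\xi\in\frac12\mathcal O$.
\item (P3) says $c\xi\in\frac12\mathcal O$ and $(b-a)\xi\in\frac12\mathcal O$.
\end{itemize}

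To solve a system $u\xi,v\xi\in\frac12\mathcal O$ with integers $u,v$ of equal dyadic valuation, I use Bézout. Writing $g=\gcd(u,v)$, one checks that the solution set is $\xi\in\frac{1}{2g}\mathcal O$ when $u/g$ and $v/g$ are both odd, and is empty otherwise.

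The valuations work out as follows. In (P1), $a$ is odd and $c-b=a+2^\tau(\ell_2-\ell_1)$ is odd (since $\tau\ge1$), giving $\frac1{2p_1}\mathcal O$. In (P3), $c$ and $b-a$ are both odd, giving $\frac1{2p_2}\mathcal O$. In (P2), $b=2^\tau\ell_1$ and $2^\tau\ell_2$ both have valuation exactly $\tau$, giving $\xi\in\frac{1}{2^{\tau+1}\gcd(\ell_1,\ell_2)}\mathcal O=\frac1{2^{\tau+1}p_3}\mathcal O$.

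One detail needs care: the normalization $\gcd(a,b,c)=1$ does not make $p_3$ differ from $\gcd(\ell_1,\ell_2)$, so no renormalization is required. For the converse inclusion, each listed $\xi$ satisfies the corresponding pairing directly, because odd multiples of $\frac12$ give $z^{(\cdot)}=-1$.

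The formula for $\mathcal Z(\widehat\mu)$ then follows from $\mathcal Z(\widehat\mu)=\bigcup_{j\ge1}N^j\mathcal Z(M_D)$, as stated in Section~2.

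For the lowest dyadic branch, I will compute valuations using $N=2^\beta m$ with $m$ odd. Elements of $\frac{N^j}{2p_i}\mathcal O$ for $i=1,2$ have valuation $j\beta-1$. Elements of $\frac{N^j}{2^{\tau+1}p_3}\mathcal O$ have valuation $j\beta-\tau-1$. Writing $\tau=\beta k+r$ with $0<r<\beta$, these valuations are all distinct across $j$ and across the two families, because the residues modulo $\beta$ are $\beta-1$ versus $\beta-r-1\not\equiv\beta-1$.

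The problem is that $\beta-\tau-1$ is then the minimum only if I compare carefully: the quantity $j\beta-\tau-1$ is minimized at $j=1$, giving $\beta-\tau-1$, while $j\beta-1\ge\beta-1>\beta-\tau-1$. Hence $R_3$ is the unique branch of minimal valuation.

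I expect the main obstacle to be the rigorous "two antipodal pairs" step together with the exact valuation bookkeeping in (P2). In particular, one must ensure that $\ell_1$ and $\ell_2$ divided by $p_3$ remain odd, which holds since $\ell_1,\ell_2\in\mathcal O$.
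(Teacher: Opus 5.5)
Your proposal is correct and follows the same route as the paper. You split the vanishing four-term sum into the three antipodal pairings, solve each pair of conditions $u\xi,v\xi\in\frac12\mathcal O$ to obtain $\frac{1}{2p_1}\mathcal O$, $\frac{1}{2p_2}\mathcal O$ and $\frac{1}{2^{\tau+1}p_3}\mathcal O$, and then compare the dyadic valuations $\beta j-1$ and $\beta j-\tau-1$ using $\tau\ge1$, merely supplying details the paper leaves implicit (the gcd computation for each pairing and the rhombus argument for the antipodal splitting).
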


\begin{proof}
A vanishing sum of four unit complex numbers splits into two opposite pairs.  The three possible pairings give, respectively,
\[
a\xi,\ (c-b)\xi\in\frac12+\mathbb Z,
\qquad
c\xi,\ (b-a)\xi\in\frac12+\mathbb Z,
\]
and
\[
2^\tau\ell_1\xi,\ 2^\tau\ell_2\xi\in\frac12+\mathbb Z.
\]
This yields the asserted formula for $\mathcal Z(M_D)$.  The zero-tower identity from Section~2 then gives the formula for $\mathcal Z(\widehat\mu)$.  Since $N=2^\beta m$ with $m$ odd, the three branch valuations at level $j$ are
\[
\beta j-1,\qquad \beta j-1,\qquad \beta j-\tau-1,
\]
so the minimum occurs exactly on $R_3$.
\end{proof}

The next lemma is the only place where completeness of a spectrum is used.

\begin{lemma}\label{lem:four-primitive-branch}
If $\Gamma$ is a spectrum for $\mu$ and $0\in\Gamma$, then
\[
\Gamma\cap R_3\ne\varnothing.
\]
\end{lemma}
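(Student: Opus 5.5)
The plan is a proof by contradiction that plays the $2$-adic valuations from Lemma~\ref{lem:four-zero-tower} against the completeness identity
\[
Q_\Gamma(\xi):=\sum_{\lambda\in\Gamma}|\widehat\mu(\xi+\lambda)|^2\equiv 1,\qquad \xi\in\mathbb R.
\]
Suppose $\Gamma$ is a spectrum with $0\in\Gamma$ and $\Gamma\cap R_3=\varnothing$. Since $0\in\Gamma$, orthogonality gives $\Gamma\setminus\{0\}\subset\mathcal Z(\widehat\mu)$. Also $(\Gamma-\Gamma)\setminus\{0\}\subset\mathcal Z(\widehat\mu)$, and every element there is a rational with odd denominator.

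By Lemma~\ref{lem:four-zero-tower}, the branches at level $j$ have valuations $\beta j-1$, $\beta j-1$ and $\beta j-\tau-1$. Put $v_0:=\beta-\tau-1$. The value $v_0$ is attained only on $R_3$: the third branch gives $v_0$ only for $j=1$, and $\beta j-1>v_0$ for all $j\ge1$. Hence the assumption forces every $\lambda\in\Gamma$ to satisfy $\nu_2(\lambda)\ge v_0+1$, with $\lambda=0$ included by convention. The aim is to show that such a ``$2$-adically thin'' set cannot satisfy $Q_\Gamma\equiv1$.

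\textbf{Step 1 (factorisation).} Apply the self-similar identity $\widehat\mu(\xi)=M_D(\xi/N)\widehat\mu(\xi/N)$. Split $M_D$ along the pairing that produces $R_3$:
\[
4M_D(\xi)=\bigl(1+e^{-2\pi i b\xi}\bigr)+e^{-2\pi i a\xi}\bigl(1+e^{-2\pi i 2^\tau\ell_2\xi}\bigr)+\text{(correction)},
\]
or, more cleanly, write $D=\{0,a\}\cup(\{0,a\}+2^\tau\{\ell_1,\ell_2\}\text{-type})$. The point of this is to isolate the factor whose zeros sit at valuation $v_0$ after scaling by $N$. The goal is an inequality of the form
\[
|\widehat\mu(\xi)|^2\le \Phi\bigl(\xi\bigr)\,\bigl|\widehat\mu(\xi/N)\bigr|^2 .
\]
Here $\Phi$ should depend on $\xi$ only through $\xi$ modulo $2^{v_0+1}$ times odd-denominator rationals, and it should have average strictly less than $1$ over the two classes $\xi$ and $\xi+2^{v_0}u$ with $u$ odd.

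\textbf{Step 2 (two-point comparison).} Fix $\xi$ and a shift $\eta=2^{v_0}u/w$ with $u,w$ odd, for example $\eta\in R_3$ itself, so that $\widehat\mu(\eta)=0$. Since every $\lambda\in\Gamma$ has valuation at least $v_0+1$, the points $\xi+\lambda$ and $\xi+\eta+\lambda$ lie in distinct classes modulo $2^{v_0+1}$ for all $\lambda$ simultaneously. Summing Step~1 over $\lambda\in\Gamma$ should give
\[
Q_\Gamma(\xi)+Q_\Gamma(\xi+\eta)\le \sup\text{-bound}<2
\]
at a suitable $\xi$, for instance near $0$ with $\xi$ chosen so that $\widehat\mu(\xi)$ is close to $1$. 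This contradicts $Q_\Gamma\equiv1$. The alternative I would keep in reserve is the direct orthogonality argument: find $\xi^*$ with valuation exactly $v_0$ such that $\xi^*-\lambda\in R_3$ for all $\lambda\in\Gamma$. Then $e_{\xi^*}\perp E(\Gamma)$, which again contradicts completeness.

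\textbf{Main obstacle.} The crux is Step~1/2, namely controlling the odd part of the denominators. Membership in $R_3$ requires the odd part of $\xi-\lambda$ to lie in $\frac{m}{p_3}\mathcal O$, not merely to have valuation $v_0$. So the naive ``shift by an element of valuation $v_0$'' does not automatically land in the zero set. I would first try to bound the odd denominators of elements of $\Gamma$: they all divide powers of $m\,p_1p_2p_3$, by the explicit form of $\mathcal Z(\widehat\mu)$. Then I would average over $\xi\in[0,1]$, or over a finite group of odd-denominator translates, to replace pointwise membership by an $L^1$ mass estimate. In the spirit of Lemma~\ref{lem:3.1}, the aim is to show that the total mass $\int Q_\Gamma$ over a period is strictly less than the length of the period. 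The delicate part is making this averaging compatible with the fact that $\Gamma$ is not a group.
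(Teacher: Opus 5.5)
There is a genuine gap. Your reformulation is correct. Since $\Gamma\setminus\{0\}\subset\mathcal Z(\widehat\mu)$ and $R_3$ is the only branch of valuation $v_0=\beta-\tau-1$, the assumption $\Gamma\cap R_3=\varnothing$ is equivalent to $v_2(\lambda)\ge v_0+1$ on $\Gamma$. (Your aside that these elements have odd denominators is false once $k\ge1$, where $v_0\le -2$.) However, Steps~1--2 are never carried out: no $\Phi$ is produced, and $Q_\Gamma(\xi)+Q_\Gamma(\xi+\eta)<2$ is only hoped for.

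Worse, the concrete mechanisms you name cannot work in general. Bessel's inequality for $\Gamma\cup(\Gamma+\eta)$, and orthogonality of $e_{\xi^*}$ to $E(\Gamma)$, both require $\eta-\lambda\in\mathcal Z(\widehat\mu)$ for every $\lambda\in\Gamma$, for a fixed $\eta$ of valuation $v_0$ (so $\eta\in R_3$). Since $v_2(\eta-\lambda)=v_0$ for $\lambda\neq0$, this means $\eta-\lambda\in R_3$. Take $\lambda=\frac{N}{2p_1}u\in R_1\setminus R_2$, i.e.\ $p_1\nmid u$. Then $\lambda\cdot\frac{2^{\tau+1}p_3}{N}=\frac{2^\tau p_3u}{p_1}\notin\mathbb{Z}$, so $\eta-\lambda\notin R_3$ for every $\eta\in R_3$. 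This happens as soon as $p_1>1$ (e.g.\ $a=3$, $\ell_1=\ell_2=1$, $\tau=1$). The obstruction is therefore not $2$-adic. $R_1$ and $R_2$ have the same valuation $\beta-1$, yet they must be treated differently, because different pairings of the digits kill them.

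The paper supplies the two missing ingredients.

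\textbf{Dichotomy.} An orthogonal set containing $0$ cannot meet both $R_1\setminus R_2$ and $R_2\setminus R_1$. Suppose $\lambda_1\in R_1\setminus R_2$ and $\lambda_2\in R_2\setminus R_1$ both lie in $\Gamma$, and consider where $\lambda_1-\lambda_2$ sits in $\mathcal Z(\widehat\mu)$.
\begin{itemize}
\item First branch at level one: one gets $p_2(u_1-w)=p_1u_2$, i.e.\ even $=$ odd.
\item First branch at level $j\ge2$: this puts $\lambda_2$ in $R_1$.
\item Third branch: the valuation forces $j\ge k+2$. Reducing $p_3(p_2u_1-p_1u_2)=p_1p_2\frac{N^{j-1}}{2^\tau}w$ modulo $p_1$, using pairwise coprimality of $p_1,p_2,p_3$, gives $p_1\mid u_1$, i.e.\ $\lambda_1\in R_2$.
\item The second branch is symmetric to the first.
\end{itemize}
Hence $\Gamma\cap(R_1\cup R_2)\subset R_i$ for a single $i$.

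\textbf{Test function.} Instead of an exponential, use a signed first-level cylinder function. Take $\eta_1=\frac14(\delta_0+\delta_{a/N}-\delta_{b/N}-\delta_{c/N})*\mu_{>1}$, or the signs $(+,-,-,+)$ on $(0,a,b,c)$ for $i=2$. Each cylinder piece is at most $4\mu$, so $\eta_i=f_i\mu$ with $f_i$ bounded and nonzero. Its transform is $\widehat\eta_i(\lambda)=P_i(\lambda)\widehat\mu(\lambda/N)$, which vanishes:
\begin{itemize}
\item at $0$, since $P_i(0)=0$;
\item at every zero of level $\ge2$, since then $\widehat\mu(\lambda/N)=0$;
\item on $R_i$, because the half-integer congruences defining $R_i$ make the two opposite pairs cancel.
\end{itemize}
Together with $\Gamma\cap R_3=\varnothing$, this covers all of $\Gamma$. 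So $f_i\perp E(\Gamma)$, contradicting completeness. This is the sharpest form of the ``mass deficit'' you were looking for: a nonzero vector of $L^2(\mu)$ with zero projection onto the closed span of $E(\Gamma)$.
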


\begin{proof}
Put
\[
R_1=\frac{N}{2p_1}\mathcal O,
\qquad
R_2=\frac{N}{2p_2}\mathcal O.
\]
We first note that an orthogonal set containing $0$ cannot contain simultaneously a point of $R_1\setminus R_2$ and a point of $R_2\setminus R_1$.  Indeed, if
\[
\lambda_1=\frac{N}{2p_1}u_1,
\qquad
\lambda_2=\frac{N}{2p_2}u_2,
\qquad u_1,u_2\in\mathcal O,
\]
then, if $\lambda_1-\lambda_2$ lies in the first branch, the level-one case gives
\[
p_1u_2=p_2(u_1-w),\qquad w\in\mathcal O
\]
an odd--even contradiction, while every higher level gives
\[
\lambda_2=\frac{N}{2p_1}(u_1-N^{j-1}w)\in R_1.
\]
The second branch is symmetric.  If the difference lies in the third branch, then
\[
\frac{u_1}{p_1}-\frac{u_2}{p_2}
=
\frac{N^{j-1}}{2^\tau p_3}w.
\]
The left-hand side has nonnegative dyadic valuation, hence $j\ge k+2$.  Since $N^{j-1}/2^\tau$ is then an integer, clearing denominators gives
\[
p_3(p_2u_1-p_1u_2)
=
p_1p_2\frac{N^{j-1}}{2^\tau}w.
\]
Reducing modulo $p_1$ and using pairwise coprimality gives $p_1\mid u_1$, so $\lambda_1\in R_2$, again a contradiction.  Thus
\[
\Gamma\cap(R_1\cup R_2)\subset R_i
\]
for one $i\in\{1,2\}$.

Assume now that $\Gamma\cap R_3=\varnothing$.  Let
\[
\mu_{>1}=\delta_{N^{-2}D}*\delta_{N^{-3}D}*\cdots
\]
and define
\[
\eta_1=\frac14(\delta_0+\delta_{a/N}-\delta_{b/N}-\delta_{c/N})*\mu_{>1},
\]
\[
\eta_2=\frac14(\delta_0+\delta_{c/N}-\delta_{a/N}-\delta_{b/N})*\mu_{>1}.
\]
Choose $\eta_i$ according to the preceding alternative.  Each first-level cylinder measure is dominated by $4\mu$, hence $\eta_i=f_i\mu$ for a bounded $f_i\in L^2(\mu)$.  The measure $\eta_i$ is nonzero because its Fourier transform is not identically zero.  Since
\[
\widehat\mu_{>1}(\lambda)=\prod_{j\ge2}M_D(\lambda/N^j)=\widehat\mu(\lambda/N),
\]
we have
\[
\widehat\eta_i(\lambda)=P_i(\lambda)\widehat\mu(\lambda/N),
\]
where
\[
P_1(\lambda)=\frac14\left(1+e^{-2\pi i a\lambda/N}-e^{-2\pi i b\lambda/N}-e^{-2\pi i c\lambda/N}\right)
\]
and
\[
P_2(\lambda)=\frac14\left(1+e^{-2\pi i c\lambda/N}-e^{-2\pi i a\lambda/N}-e^{-2\pi i b\lambda/N}\right).
\]
For $\lambda=0$ the corresponding polynomial vanishes; for a zero of level at least two, $\widehat\mu(\lambda/N)=0$; and for a first-level zero in $R_i$, the defining odd congruences give $P_i(\lambda)=0$.  Hence $\widehat\eta_i(\lambda)=0$ for every $\lambda\in\Gamma$.  This contradicts the completeness of $E(\Gamma)$, and proves the lemma.
\end{proof}

We now derive the necessary part of Theorem~\ref{thm:four-digit-application}.  Let $S\subset\mathbb R\setminus\{0\}$ and suppose that $\Gamma\in V(\mu,S)$.  After translating $\Gamma$, assume $0\in\Gamma$, and choose
\[
\lambda_0=\frac{N}{2^{\tau+1}p_3}M\in\Gamma\cap R_3,
\qquad M\in\mathcal O,
\]
by Lemma~\ref{lem:four-primitive-branch}.  Fix $q\in S$.  Since $q\Gamma$ is also a spectrum, $q\lambda_0\in\mathcal Z(\widehat\mu)$, so $q\in\mathbb Q\setminus\{0\}$.  Writing $s=v_2(q)$, where $v_2$ denotes the dyadic valuation on $\mathbb Q\setminus\{0\}$, and comparing the branches in Lemma~\ref{lem:four-zero-tower} gives
\[
s\in\{\beta n:n\ge0\}\cup\{\tau+\beta n:n\ge0\}.
\]
Applying Lemma~\ref{lem:four-primitive-branch} to $q\Gamma$, choose $q\gamma_0\in R_3$ with $\gamma_0\in\Gamma\setminus\{0\}$.  Since $0\in\Gamma$, orthogonality gives $\gamma_0\in\mathcal Z(\widehat\mu)$, and a second comparison yields
\[
s\in\{-\beta n:n\ge0\}\cup\{-\tau-\beta n:n\ge0\}.
\]
Therefore $s=0$.  Hence $q\lambda_0$ lies again in the unique lowest branch $R_3$, and
\[
q\lambda_0=\frac{N}{2^{\tau+1}p_3}M_q
\qquad(M_q\in\mathcal O).
\]
Thus $q=M_q/M$, and consequently
\begin{equation}\label{eq:four-common-necessary}
S\subset M^{-1}\mathcal O.
\end{equation}
In particular, $E_2(\mu)\subset\mathcal O/\mathcal O$.

\subsection{The product-form realization}

It remains to prove that the obstruction above is sharp.  The case $k=0$ is an ordinary Hadamard-triple case: with
\[
L_A=\left\{0,\frac N2\right\},
\qquad
L_B=\left\{0,\frac{N}{2^{r+1}}\right\},
\]
the triple $(N,D,p(L_A\oplus L_B))$ is Hadamard for every odd integer $p$.  Indeed, the difference $pN/2$ pairs $0$ with $a$ and $b$ with $c$, while every other nonzero difference is $pNq/2^{r+1}$ with $q$ odd and pairs $0$ with $b$ and $a$ with $c$.  By Lu's theorem~\cite{Lu 01}, for each admissible $s$ there are continuum many spectra $\Omega$ such that $p\Omega$ is a spectrum for every odd integer $p$.  If $S\subset T^{-1}\mathcal O$, then $\Lambda=T\Omega$ is a spectrum and
\[
q\Lambda=(qT)\Omega
\]
is a spectrum for every $q\in S$, because $qT\in\mathcal O$.  Thus the asserted continuum family follows in this case.

Assume henceforth that $k\ge1$.  Set
\[
D^*=m^kD,
\qquad
Q=N^k,
\qquad
\Delta=D^*+ND^*+\cdots+N^{k-1}D^*.
\]
With $a^*=am^k$, one has
\[
D^*=(\{0\}+QB_0)\cup(a^*+QB_1),
\]
where
\[
B_0=\{0,2^r\ell_1\},
\qquad
B_1=\{0,2^r\ell_2\}.
\]
For $\omega=(\omega_0,\ldots,\omega_{k-1})\in\{0,1\}^k$, put
\[
a_\omega=\sum_{j=0}^{k-1}N^jA_{\omega_j},
\qquad
B_\omega=\sum_{j=0}^{k-1}N^jB_{\omega_j},
\qquad
A_0=0,\ A_1=a^*.
\]
Then
\begin{equation}\label{eq:four-block-product}
\Delta=\bigcup_{\omega\in\{0,1\}^k}(a_\omega+QB_\omega).
\end{equation}
Let
\[
L_A^{[k]}=L_A+NL_A+\cdots+N^{k-1}L_A,
\qquad
L_B^{[k]}=L_B+NL_B+\cdots+N^{k-1}L_B.
\]
Here $A=\{0,a^*\}$ and
\[
A^{[k]}=A+NA+\cdots+N^{k-1}A.
\]
For every odd integer $p$, the two nontrivial one-level phases are
\[
\frac{a^*}{N}\cdot\frac{pN}{2}=\frac{pa^*}{2},
\qquad
\frac{2^r\ell}{N}\cdot\frac{pN}{2^{r+1}}=\frac{p\ell}{2},
\qquad \ell\in\{\ell_1,\ell_2\},
\]
both half-integers.  Hence $(N,\{0,a^*\},pL_A)$ and $(N,B_i,pL_B)$, $i=0,1$, are Hadamard triples.  In the direct-sum triple the mixed frequency differences are also annihilated by the $B_i$ factor, since their numerators modulo $2^{r+1}$ are odd.  Iterating these one-level triples over the $k$ levels shows that
\[
(Q,A^{[k]},pL_A^{[k]}),
\qquad
(Q,B_\omega,pL_B^{[k]}),
\]
and
\[
(Q,A^{[k]}\oplus B_\omega,
 p(L_A^{[k]}\oplus L_B^{[k]}))
\]
are Hadamard triples for every $\omega$.  The tensor-product Hadamard matrices also give the corresponding directness: the sets $A^{[k]}$, $B_\omega$, and $A^{[k]}\oplus B_\omega$ have cardinalities $2^k$, $2^k$, and $4^k$, respectively, and the cosets in \eqref{eq:four-block-product} are disjoint.  Hence \eqref{eq:four-block-product} is a product-form Hadamard triple and every odd integer belongs to its multiplier set.  Therefore $\#\Delta=4^k$, and the $k$-level blocking identity gives
\[
\mu_{N,D^*}=\mu_{Q,\Delta},
\]
and
\[
Q=N^k>4^k=\#\Delta,
\qquad
\frac{\log\#\Delta}{\log Q}=\frac{\log4}{\log N}.
\]

We can now complete the proof.  Let $S\subset T^{-1}\mathcal O$ for some $T\in\mathcal O$, and fix
\[
0\le s\le\frac{\log4}{\log N}.
\]
For $k\ge1$, Theorem~\ref{thm:1.7}, applied to $(Q,\Delta)$, gives continuum many spectra $\Omega$ of $\mu_{Q,\Delta}$ with $\dim_{Be}\Omega=s$ such that $p\Omega$ is a spectrum for every odd integer $p$.  Since $m^kD$ is an odd dilation of $D$, the sets
\[
\Lambda=m^kT\Omega
\]
are spectra for $\mu$, and
\[
q\Lambda=m^k(qT)\Omega
\]
is a spectrum for every $q\in S$.  The Beurling dimension is unchanged by nonzero dilation, and the map $\Omega\mapsto m^kT\Omega$ is injective.  Thus
\[
\#\{\Lambda\in V(\mu,A):\dim_{Be}\Lambda=s\}=2^{\aleph_0}.
\]
The same conclusion for $k=0$ follows from the preceding application of Lu's theorem.

Together with \eqref{eq:four-common-necessary}, this proves the common-set characterization.  Taking $A=\{1,t\}$ gives
\[
E_2(\mu)=\mathcal O/\mathcal O,
\]
and completes the proof of Theorem~\ref{thm:four-digit-application}.\\

\noindent\textbf{Funding.} the second author was supported in part by the National Key Research and Development Program of China (Nos. 2024YFA1013700), NNSF of China (Nos. 12331005).

\bigskip
\begingroup
\parindent=0pt
\small
\textsuperscript{1}\ \paperaddressone
\par\textit{E-mail address:} \paperemailone
\medskip
\par\textsuperscript{2}\ \paperaddresstwo
\par\textit{E-mail address:} \paperemailtwo\quad ($*$ Corresponding author)
\endgroup

\end{document}